\newtheorem{theorem}{Theorem}[section]
\newtheorem{lemma}[theorem]{Lemma}
\newtheorem{proposition}[theorem]{Proposition}
\theoremstyle{definition}
\newtheorem{definition}[theorem]{Definition}
\newtheorem{remark}[theorem]{Remark}
\definecolor{light-gray}{gray}{0.8}
\definecolor{mgray}{gray}{.6}
\begin{document}
\date{}
\title{Baxter Posets}
\thanks{The author was partially supported by NSF grant DMS-1500949.} 
\author{Emily Meehan}

\maketitle

\begin{abstract}
We define a family of combinatorial objects, which we call Baxter posets.
We prove that Baxter posets are counted by the Baxter numbers by showing that they are the adjacency posets of diagonal rectangulations.
Given a diagonal rectangulation, we describe the cover relations in the associated Baxter poset.
Given a Baxter poset, we describe a method for obtaining the associated Baxter permutation and the associated twisted Baxter permutation.
\end{abstract}

\tableofcontents

\section{Introduction}\label{intro}
The Baxter numbers
$$B(n)={{n+1}\choose 1}^{-1} {{n+1} \choose 2}^{-1} \sum_{k=1}^n {n+1 \choose k-1} {n+1 \choose k} {n+1 \choose k+1}$$
count Baxter permutations \cite{chung}, twisted Baxter permutations \cite{drec}, certain triples of non-intersecting lattice paths \cite{dulucq}, noncrossing arc diagrams consisting of only left and right arcs  \cite{Noncrossing}, certain Young tableaux \cite{dulucq2}, twin binary trees \cite{dulucq2}, diagonal rectangulations~\cite{Ackerman2, felsner, drec}, and other families of combinatorial objects.

In this paper, we define Baxter posets and prove that they are also counted by the Baxter numbers.
Baxter posets are closely related to Catalan combinatorics.
Specifically, Baxter posets (and the closely related diagonal rectangulations) can be realized through ``twin" Catalan objects.
Additionally, the relationship between Baxter posets and diagonal rectangulations is analogous to the relationship between two Catalan objects, specifically sub-binary trees and triangulations of convex polygons.
As a prelude to our discussion of Baxter posets, we describe a few Catalan objects and bijections between them.

Let $S_n$ denote the set of permutations of $[n]=\{1,\ldots,n\}$.
We say that $\sigma=\sigma_1\cdots \sigma_n \in S_n$ \emph{avoids the pattern 2-31} if there does not exist $i<j$ such that $\sigma_{j+1}<\sigma_i<\sigma_j$.
The Catalan number $C(n)=\frac{1}{n}{{2n}\choose{n}}$ counts the elements of~$S_n$ that avoid the pattern 2-31.
The map~$\tau_b$, described below and illustrated in Figure~\ref{tau b}, assigns a triangulation of a convex $(n+2)$-gon to each element of $S_n$, and restricts to a bijection between permutations that avoid 2-31 and triangulations of polygons.
Let $\sigma=\sigma_1\cdots \sigma_n \in S_n$ and let $P$ be a convex $(n+2)$-gon.
For convenience, deform~$P$ so that $P$ is inscribed in the upper half of a circle, and label each vertex of $P$, in numerical order from left to right, with an element of the sequence $0,1,\ldots,n+1$.
For each $i\in \{0,\ldots,n\}$, construct a path $P_i$ from the vertex labeled $0$ to the vertex labeled $n+1$ that visits the vertices labeled by elements of $\{\sigma_1,\ldots,\sigma_i\}$ in numerical order.
The union of these paths defines $\tau_b(\sigma)$, a triangulation of $P$.

Given a triangulation $\Delta$ of a convex $(n+2)$-gon $P$, deform $P$ (and $\Delta$) as above.
Construct a graph with an edge crossing each edge of $\Delta$ except the horizontal diameter, as shown in red in the left diagram of Figure \ref{tau b}.  
(This is essentially the dual graph of $\Delta$.)
In what follows, we will call this the \emph{dual graph construction}.
Terminology for the resulting family of trees is mixed in the literature, with adjectives such as complete, planar, rooted, and binary appearing inconsistently. 
We will call the resulting tree a binary tree and provide a careful definition.
For us, a \emph{binary tree} is a rooted tree such that every non-leaf has exactly two children, with one child identified as the left child and the other as the right child.
The dual graph construction gives a bijection between triangulations of a convex $(n+2)$-gon and binary trees with~$2n+1$ vertices.
The root of the binary tree corresponds to the bottom triangle of $\Delta$ and children are identified as left or right according to the embedding of $\Delta$ in the plane.
For a reason that will become apparent later, we deform each binary tree resulting from this bijection as shown in the right diagram of Figure \ref{tau b} so that the root is the lowermost vertex.
Removing the leaves of a binary tree and retaining the left-right labeling of each child, we obtain a \emph{sub-binary tree}, a rooted tree in which every vertex has 0, 1, or~2 children, and each child is labeled left or right, with at most one child of each vertex receiving each label.
The leaf-removal map is a bijection between binary trees with $2n+1$ vertices and sub-binary trees with $n$ vertices.
In the example shown in Figure \ref{tau b}, the edges removed by this map are shown as dashed segments.

\begin{figure}
\begin{tabular}{c  c  c}
\adjustbox{valign=c}{
\begin{tikzpicture}[scale=.65]
\foreach \a in {0,1,...,9}{
\draw (-\a*180/9+180: 3.8cm) node{ \a};
}
\foreach \b in {0,1,...,9}{
\draw[fill] (-\b*180/9+180: 3.5cm) circle (.2ex);
}
\draw(0:3.5)--(180/9: 3.5)--(180*2/9: 3.5)--(180*3/9: 3.5)--(180*4/9: 3.5)--(180*5/9: 3.5)--(180*6/9: 3.5)--(180*7/9: 3.5)--(180*8/9: 3.5)--(180*9/9: 3.5);
\draw(180:3.5)--(180*7/9:3.5)--(180*4/9:3.5)--(180*2/9:3.5)--(0:3.5);
\draw(180*7/9:3.5)--(180*5/9:3.5);
\draw (180: 3.5)--(180*4/9:3.5)--(0:3.5)--cycle;

\foreach \b in {0,1,...,8}{
\draw[fill, red] (\b*180/9+180/18: 3.5cm) circle (.2ex);
}
\draw[red](80:.5)--(40:3)--(20:3.35)--(10:3.5);
\draw[red](20:3.35)--(30:3.5);
\draw[red](40:3)--(60:3.35)--(50: 3.5);
\draw[red](60:3.35)--(70: 3.5);
\draw[red](80: .5)--(140:2.8)--(160: 3.35)--(170: 3.5);
\draw[red](140:2.8)--(100: 3.3)--(120:3.37)--(130:3.5);
\draw[red](100: 3.3)--(90: 3.5);
\draw[red](120:3.37)--(110:3.5);
\draw[red](160:3.35)--(150: 3.5);

\draw[fill, red] (80:.5) circle (.2ex);
\draw[fill, red] (40:3) circle (.2ex);
\draw[fill, red] (20:3.35) circle (.2ex);
\draw[fill, red] (60:3.35) circle (.2ex);
\draw[fill, red] (140:2.8) circle (.2ex);
\draw[fill, red] (160:3.35) circle (.2ex);
\draw[fill, red] (100:3.3) circle (.2ex);
\draw[fill, red] (120:3.37) circle (.2ex);
\end{tikzpicture}}

&  &

\adjustbox{valign=c}{\begin{tikzpicture}[scale=.5]
\draw[red,dashed](-4,4)--(-3.5, 3.5);
\draw[red, dashed](3.5, 3.5)--(4,4);
\draw[red](-3.5, 3.5)--(0,0)--(3.5,3.5);
\draw[red,dashed](-3,4)--(-3.5, 3.5);
\draw[red](-1,3)--(-2,2);
\draw[red,dashed](0,4)--(-1,3);
\draw[red,dashed](-2,4)--(-1.5, 3.5);
\draw[red](-1.5,3.5)--(-1,3);
\draw[red,dashed](-1,4)--(-1.5, 3.5);
\draw[red](1.5, 3.5)--(2.5,2.5);
\draw[red,dashed](1,4)--(1.5,3.5);
\draw[red,dashed](2,4)--(1.5, 3.5);
\draw[red,dashed](3,4)--(3.5, 3.5);

\draw[fill, red] (0,0) circle (.2ex); 
\draw[fill, red] (-3.5, 3.5) circle (.2ex); 
\draw[fill, red] (-2,2) circle (.2ex);
\draw[fill, red] (-1,3) circle (.2ex);
\draw[fill, red] (-1.5,3.5) circle (.2ex);
\draw[fill, red] (2.5, 2.5) circle (.2ex);
\draw[fill, red] (1.5, 3.5) circle (.2ex);
\draw[fill, red] (3.5,3.5) circle (.2ex);
\draw[fill, red] (-4,4) circle (.2ex);
\draw[fill, red] (-3,4) circle (.2ex);
\draw[fill, red] (-2,4) circle (.2ex);
\draw[fill, red] (-1,4) circle (.2ex);
\draw[fill, red] (0,4) circle (.2ex);
\draw[fill, red] (1,4) circle (.2ex);
\draw[fill, red] (2,4) circle (.2ex);
\draw[fill, red] (3,4) circle (.2ex);
\draw[fill, red] (4,4) circle (.2ex);
\end{tikzpicture}}
\end{tabular}\caption{The Catalan objects obtained by applying the described bijections to the 2-31 avoiding permutation~$52143768$. 
}\label{tau b}
\end{figure}
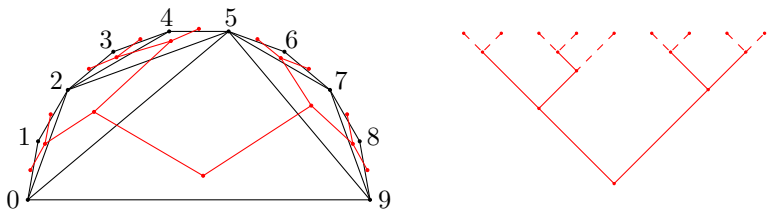

We will make use of a second similar map from permutations to triangulations.
We say that a permutation \emph{avoids the pattern 31-2} if there does not exist ${i<i+1<j}$ such that $\sigma_{i+1}<\sigma_j<\sigma_i$.
The map $\tau_t$ described below restricts to a bijection between elements of $S_n$ that avoid 31-2 and triangulations of a convex $(n+2)$-gon.
Let $\sigma \in S_n$ and $P$ a convex $(n+2)$-gon.
Deform $P$ and label its vertices as shown in the example in Figure \ref{tau t}.
For each $i\in \{0,1,\ldots,n\}$, construct the path~$P_i$ that begins at the vertex labeled~0, visits in numerical order each vertex labeled by an element of $[n]-\{\sigma_1,\ldots,\sigma_i\}$, and ends at the vertex labeled $n+1$.
The union of these paths is $\tau_t(\sigma)$.
Performing the dual graph construction and then the leaf-removal map, we obtain corresponding binary and sub-binary trees.  
This time, we choose to deform the binary and sub-binary trees so that the root is the uppermost vertex, as illustrated in the right diagram of Figure \ref{tau t}.

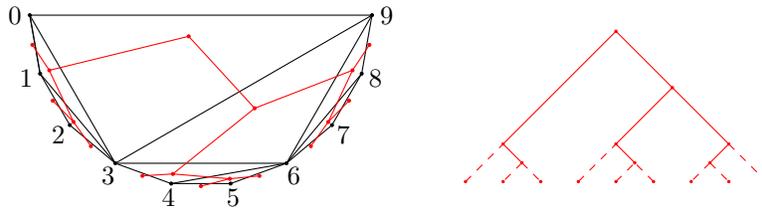
\begin{figure}
\begin{tabular}{c c c}
\adjustbox{valign=c}{
\begin{tikzpicture}[scale=.65]
\foreach \a in {0,1,...,9}{
\draw (\a*180/9+180: 3.8cm) node{ \a};
}
\foreach \b in {0,1,...,9}{
\draw[fill] (\b*180/9+180: 3.5cm) circle (.2ex);
}
\draw(0:3.5)--(-180/9: 3.5)--(-180*2/9: 3.5)--(-180*3/9: 3.5)--(-180*4/9: 3.5)--(-180*5/9: 3.5)--(-180*6/9: 3.5)--(-180*7/9: 3.5)--(-180*8/9: 3.5)--(-180*9/9: 3.5);
\draw(180:3.5)--(-160:3.5)--(-120:3.5)--(-60:3.5)--(0:3.5);
\draw(-60:3.5)--(-100:3.5);
\draw(-60: 3.5)--(-20:3.5);
\draw (180: 3.5)--(-120:3.5)--(0:3.5)--cycle;

\foreach \b in {0,1,...,8}{
\draw[fill, red] (-\b*180/9-180/18: 3.5cm) circle (.2ex);
}
\draw[red](-120:.5)--(-60: 2.2)--(-100:3.3)--(-110:3.5);
\draw[red](-100:3.3)--(-80: 3.4)--(-90: 3.5);
\draw[red](-80: 3.4)--(-70:3.5);
\draw[red](-60:2.2)--(-20:3.3)--(-40: 3.4)--(-50: 3.5);
\draw[red](-40:3.4)--(-30: 3.5);
\draw[red](-20:3.3)--(-10:3.5);
\draw[red](-120:.5)--(-160:3.3)--(-170:3.5);
\draw[red](-160:3.3)--(-140:3.4)--(-150: 3.5);
\draw[red](-140: 3.4)--(-130:3.5);

\draw[fill, red] (-120:.5) circle (.2ex);
\draw[fill, red] (-60: 2.2) circle (.2ex);
\draw[fill, red] (-100: 3.3) circle (.2ex);
\draw[fill, red] (-80: 3.4) circle (.2ex);
\draw[fill, red] (-20: 3.3) circle (.2ex);
\draw[fill, red] (-40:3.4) circle (.2ex);
\draw[fill, red] (-160: 3.3) circle (.2ex);
\draw[fill, red] (-140:3.4) circle (.2ex);
\end{tikzpicture}}

& & 

\adjustbox{valign=c}{\begin{tikzpicture}[scale=.5]
\draw[red,dashed](0,0)--(1,1);
\draw[red, dashed](7,1)--(8,0);
\draw[red](1,1)--(4,4)--(7,1);
\draw[red](1,1)--(1.5,.5);
\draw[red,dashed](1.5,.5)--(1,0);
\draw[red,dashed](1.5, .5)--(2,0);
\draw[red](7,1)--(6.5, .5);
\draw[red,dashed](6.5, .5)--(7,0);
\draw[red,dashed](6.5, .5)--(6,0);
\draw[red,dashed](3,0)--(4,1);
\draw[red](4,1)--(5.5, 2.5);
\draw[red,dashed](4,0)--(4.5, .5);
\draw[red,dashed](5,0)--(4.5, .5);
\draw[red](4.5, .5)--(4, 1);

\draw[fill, red] (1,1) circle (.2ex);
\draw[fill, red] (7,1) circle (.2ex);
\draw[fill, red] (4,4) circle (.2ex); 
\draw[fill, red] (1.5, .5) circle (.2ex); 
\draw[fill, red] (6.5, .5) circle (.2ex); 
\draw[fill, red] (4,1) circle (.2ex);
\draw[fill, red] (5.5, 2.5) circle (.2ex); 
\draw[fill, red] (4.5, .5) circle (.2ex); 
\draw[fill, red] (0,0) circle (.2ex);
\draw[fill, red] (1,0) circle (.2ex);
\draw[fill, red] (2,0) circle (.2ex);
\draw[fill, red] (3,0) circle (.2ex);
\draw[fill, red] (4,0) circle (.2ex);
\draw[fill, red] (5,0) circle (.2ex);
\draw[fill, red] (6,0) circle (.2ex);
\draw[fill, red] (7,0) circle (.2ex);
\draw[fill, red] (8,0) circle (.2ex);
\end{tikzpicture}}
\end{tabular}\caption{The Catalan objects obtained by applying the described bijections to the 31-2 avoiding permutation $21547863$.
}\label{tau t}
\end{figure}

Although a sub-binary tree is an unlabeled graph, for each sub-binary tree with~$n$ vertices, there exists a unique labeling of its vertices by the elements of $[n]$ such that every parent vertex has a label numerically larger than the labels of its left descendants and numerically smaller than its right descendants.
An example of a sub-binary tree with such a labeling is show in Figure \ref{treelabel}.
Let $T$ be a labeled sub-binary tree embedded in the plane as shown in Figure  \ref{treelabel} and $\Delta_T$ the associated triangulation.
View $T$ as the Hasse diagram of a poset.
We say that a total order~$L$ of the elements of $T$ is a \emph{linear extension} of $T$ if $x<_Ty$ implies that $x<_L y$.
The linear extensions of~$T$, viewed as permutations in one-line notation, are exactly the permutations that map to~$\Delta_T$ under $\tau_b$.
To see why, label each triangle of~$\Delta_T$ according to the label of its middle (from left to right) vertex, as illustrated in Figure \ref{treelabel}.
The linear extensions of $T$ are exactly the permutations that map to $\Delta_T$ because $x<_Ty$ if and only if the triangle labeled~$y$ is ``above" the triangle labeled~$x$. 
Similarly, given a sub-binary tree~$T'$, embedded in the plane as illustrated in Figure~\ref{tau t}, and associated triangulation $\Delta_{T'}$, we obtain a labeling of~$T'$ such that the linear extensions of~$T'$ are exactly the permutations that map to $\Delta_{T'}$ under~$\tau_t$. 

\begin{figure}
\begin{tabular}{c  c  c}
\adjustbox{valign=c}{
\begin{tikzpicture}[scale=.95]

\draw(0:3.5)--(180/9: 3.5)--(180*2/9: 3.5)--(180*3/9: 3.5)--(180*4/9: 3.5)--(180*5/9: 3.5)--(180*6/9: 3.5)--(180*7/9: 3.5)--(180*8/9: 3.5)--(180*9/9: 3.5);
\draw(180:3.5)--(180*7/9:3.5)--(180*4/9:3.5)--(180*2/9:3.5)--(0:3.5);
\draw(180*7/9:3.5)--(180*5/9:3.5);
\draw (180: 3.5)--(180*4/9:3.5)--(0:3.5)--cycle;
\node at (80: 1.5) {\tiny 5};
\node at (40: 3.1) {\tiny  7};
\node at (140: 2.8) {\tiny  2};
\node at (20: 3.4) {\tiny 8};
\node at (60: 3.4) {\tiny 6};
\node at (100: 3.3) {\tiny 4};
\node at (120: 3.4) {\tiny 3};
\node at (160: 3.38) {\tiny 1};
\end{tikzpicture}}

&  &

\adjustbox{valign=c}{
\begin{tikzpicture}[scale=.5]
\draw[red](-3.5, 3.5)--(0,0)--(3.5,3.5);
\draw[red](-1,3)--(-2,2);
\draw[red](-1.5,3.5)--(-1,3);
\draw[red](1.5, 3.5)--(2.5,2.5);

\draw[fill, red] (0,0) circle (.2ex); \node[red] at (0,-.4) {5};
\draw[fill, red] (-3.5, 3.5) circle (.2ex); \node[red] at (-3.6,3.9) {1};
\draw[fill, red] (-2,2) circle (.2ex); \node[red] at (-2.2,1.7) {2};
\draw[fill, red] (-1,3) circle (.2ex); \node[red] at (-.7,3) {4};
\draw[fill, red] (-1.5,3.5) circle (.2ex); \node[red] at (-1.6, 3.9) {3};
\draw[fill, red] (2.5, 2.5) circle (.2ex); \node[red] at (2.7,2.2) {7};
\draw[fill, red] (1.5, 3.5) circle (.2ex); \node[red] at (1.4,3.9) {6};
\draw[fill, red] (3.5,3.5) circle (.2ex); \node[red] at (3.6,3.9) {8};
\end{tikzpicture}}
\end{tabular}\caption{The labeling of a sub-binary tree obtained from the labeling of the triangles in the corresponding triangulation.}\label{treelabel}
\end{figure}
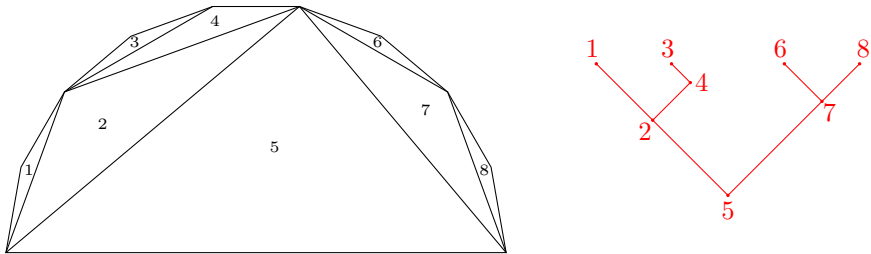

We now relate the Catalan objects described above to Baxter objects.
Specifically, we will see that diagonal rectangulations are made by gluing together binary trees, and we will construct Baxter posets so that they play the same role for diagonal rectangulations that sub-binary trees play for triangulations.

A permutation $\sigma=\sigma_1\cdots \sigma_n\in S_n$ \emph{avoids the patterns 2-41-3 and 3-41-2} if there does not exist $i<j<j+1<k$ such that $\sigma_{j+1}<\sigma_i<\sigma_k<\sigma_j$ or $\sigma_{j+1}<\sigma_k<\sigma_i<\sigma_j$. 
If $\sigma$ avoids the patterns 2-41-3 and 3-41-2, then we say that $\sigma$ is a \emph{twisted Baxter permutation}.
The twisted Baxter permutations in $S_n$ are counted by $B(n)$. 

Twisted Baxter permutations are related to certain decompositions of a square into rectangles.
Given $\sigma \in S_n$, glue the binary trees corresponding to~$\tau_b(\sigma)$ and~$\tau_t(\sigma)$, called \emph{twin binary trees}, along their leaves to obtain a decomposition of a square into $n$ rectangles, and then rotate the resulting figure $\pi/4$ radians clockwise.
The result of applying this binary tree gluing map to the permutation $52147862$ is shown in the left diagram of Figure \ref{rect}.
The binary trees which are glued together in this example are shown in Figures \ref{tau b} and \ref{tau t}.
We call each decomposition resulting from this binary tree gluing map a \emph{diagonal rectangulation} (defined precisely in Section \ref{definitions}) because the top-left to bottom-right diagonal of the square contains an interior point of each rectangle of the decomposition.
The map restricts to a bijection between twisted Baxter permutations and diagonal rectangulations. 

Given a diagonal rectangulation, label the rectangles of the decomposition according to the order in which they appear along the diagonal, labeling the upper-leftmost rectangle with 1 and the lower-rightmost rectangle with $n$.
We refer to the rectangle with label $i$ as ``rectangle~$i$."
Construct a poset $P$ on $[n]$ by declaring $x<_P y$ if the interior of the bottom or left side of rectangle~$y$ intersects the interior of the top or right side of rectangle~$x$, and then taking the reflexive and transitive closure of these relations.
Remark 6.7 in \cite{drec} explains that, before taking the reflexive and transitive closure, these relations are acyclic.
Thus $P$ is a partial order on~$[n]$.
This poset, which we call the \emph{adjacency poset} of the diagonal rectangulation, is defined in \cite{felsner, drec}.
(A more general set of posets, corresponding to elements of the Baxter monoid, are defined in ~\cite
{giraudo}.)
Each adjacency poset captures the ``right of" and ``above" relations of the diagonal rectangulation just as each sub-binary tree captures the ``above" relations of the corresponding triangulation.
Additionally, given an adjacency poset $P$ and the corresponding diagonal rectangulation $D$, the set of linear extensions of $P$ is the set of permutations that map to $D$ under the binary tree gluing map \cite[Remark~6.7]{drec}.
We note that two permutations $\sigma$ and $\psi$ map to the same diagonal rectangulation if and only if $\tau_b(\sigma)=\tau_b(\psi)$ and $\tau_t(\sigma)=\tau_t(\psi).$
Thus, the set of linear extensions of the adjacency poset of a diagonal rectangulation is the intersection of the sets of linear extensions of the labeled sub-binary trees obtained from $\tau_b$ and $\tau_t$.

As a diagonal rectangulation can be constructed from twin binary trees, the adjacency poset of a diagonal rectangulation can be constructed using the corresponding labeled sub-binary trees.
Let $D$ be a diagonal rectangulation, $P$ the associated adjacency poset, and~$T_b$ and $T_t$ respectively denote the corresponding labeled sub-binary trees obtained from $\tau_b$ and~$\tau_t$.
By declaring $x<_Py$ if $x<_{T_b} y$ or $x<_{T_t} y$ and then taking the transitive closure, we obtain all of the relations of~$P$.
Although it is simple to use the relations of $T_b$ and $T_t$ to list the relations of~$P$, it is not so straightforward to obtain a description of the Hasse diagram of~$P$ or to characterize the set of adjacency posets of diagonal rectangulations.
In any poset~$P$, we say that \emph{$x$ covers $y$}, denoted $x \lessdot_P y$, if $x<_P y$ and there exists no~$z$ such that $x<_P z <_P y$.
In Theorem~\ref{thm:adjcovers}, the first main result of this paper, we show that $x$ covers $y$ in the adjacency poset $P$ 
if and only if, in the associated diagonal rectangulation, rectangles $x$ and $y$ form one of the configurations shown in Figure \ref{fig:adjcovers}.
This theorem allows us to obtain a Hasse diagram for the adjacency poset from a diagonal rectangulation just as we easily obtain a sub-binary tree from a triangulation.

\begin{figure}
\begin{tabular}{c c c c c}
\adjustbox{valign=c}{
\begin{tikzpicture}[scale=.3]
\draw[fill] (0,8) circle (.3ex);
\draw[fill] (1,7) circle (.3ex);
\draw[fill] (2,6) circle (.3ex);
\draw[fill] (3,5) circle (.3ex);
\draw[fill] (4,4) circle (.3ex);
\draw[fill] (5,3) circle (.3ex);
\draw[fill] (6,2) circle (.3ex);
\draw[fill] (7,1) circle (.3ex);
\draw[fill] (8,0) circle (.3ex);
\draw[fill] (0,7) circle (.3ex);
\draw[fill] (2,8) circle (.3ex);
\draw[fill] (8,8) circle (.3ex);
\draw[fill] (8,2) circle (.3ex);
\draw[fill] (7,0) circle (.3ex);
\draw[fill] (0,0) circle (.3ex);
\draw[fill] (0,4) circle (.3ex);
\draw[fill] (2,4) circle (.3ex);
\draw[fill] (5,4) circle (.3ex);
\draw[fill] (5,2) circle (.3ex);
\draw[fill] (5,0) circle (.3ex);
\draw[fill] (2,7) circle (.3ex);
\draw[fill] (2,5) circle (.3ex);
\draw[fill] (5,5) circle (.3ex);
\draw[fill] (7,2) circle (.3ex);
\draw[fill] (8,5) circle (.3ex);

\draw[dashed](0,7)--(0,8)--(2,8);
\draw(2,8)--(8,8)--(8,2); 
\draw[dashed](8,2)--(8,0)--(7,0);
\draw (7,0)--(0,0)--(0,7);
\draw(0,4)--(2,4);
\draw[dashed](2,4)--(5,4)--(5,2);
\draw(5,2)--(5,0);
\draw[dashed] (0,7)--(2,7)--(2,5);
\draw(2,5)--(2,4);
\draw (2,8)--(2,7);
\draw[dashed] (2, 5)--(5, 5);
\draw (5,5)--(5,4);
\draw [dashed] (5,2)--(7,2)--(7,0);
\draw (7,2)--(8,2);
\draw (5, 5)--(8,5);

\end{tikzpicture}}

 \hspace{.4in} 

\adjustbox{valign=c}{
\begin{tikzpicture}[scale=.3]
\draw[fill] (0,8) circle (.3ex);
\draw[fill] (1,7) circle (.3ex);
\draw[fill] (2,6) circle (.3ex);
\draw[fill] (3,5) circle (.3ex);
\draw[fill] (4,4) circle (.3ex);
\draw[fill] (5,3) circle (.3ex);
\draw[fill] (6,2) circle (.3ex);
\draw[fill] (7,1) circle (.3ex);
\draw[fill] (8,0) circle (.3ex);
\node at (.5, 7.5) {$1$};
\node at (1.5, 6.5) {$2$};
\node at (2.5, 5.5) {$3$};
\node at (3.5, 4.5) {$4$};
\node at (4.5, 3.5) {$5$};
\node at (5.5, 2.5) {$6$};
\node at (6.5, 1.5) {$7$};
\node at (7.5, .5) {$8$};
\draw[fill] (0,7) circle (.3ex);
\draw[fill] (2,8) circle (.3ex);
\draw[fill] (8,8) circle (.3ex);
\draw[fill] (8,2) circle (.3ex);
\draw[fill] (7,0) circle (.3ex);
\draw[fill] (0,0) circle (.3ex);
\draw[fill] (0,4) circle (.3ex);
\draw[fill] (2,4) circle (.3ex);
\draw[fill] (5,4) circle (.3ex);
\draw[fill] (5,2) circle (.3ex);
\draw[fill] (5,0) circle (.3ex);
\draw[fill] (2,7) circle (.3ex);
\draw[fill] (2,5) circle (.3ex);
\draw[fill] (5,5) circle (.3ex);
\draw[fill] (7,2) circle (.3ex);
\draw[fill] (8,5) circle (.3ex);

\draw(0,7)--(0,8)--(2,8);
\draw(2,8)--(8,8)--(8,2); 
\draw(8,2)--(8,0)--(7,0);
\draw (7,0)--(0,0)--(0,7);
\draw(0,4)--(2,4);
\draw(2,4)--(5,4)--(5,2);
\draw(5,2)--(5,0);
\draw (0,7)--(2,7)--(2,5);
\draw(2,5)--(2,4);
\draw (2,8)--(2,7);
\draw (2, 5)--(5, 5);
\draw (5,5)--(5,4);
\draw (5,2)--(7,2)--(7,0);
\draw (7,2)--(8,2);
\draw (5, 5)--(8,5);

\draw[red](1,1)--(1, 4.5)--(1,7.5)--(7.5, 7.5)--(7.5,4.5)--(7.5, 1)--(6, 1)--(1,1);
\draw[red] (1, 4.5)--(3, 4.5)--(7.5, 4.5);
\draw[fill,red] (1,1) circle (.3ex);
\draw[fill,red] (1,4.5) circle (.3ex);
\draw[fill,red] (1,7.5) circle (.3ex);
\draw[fill,red] (7.5,7.5) circle (.3ex);
\draw[fill,red] (7.5, 4.5) circle (.3ex);
\draw[fill,red] (7.5, 1) circle (.3ex);
\draw[fill,red] (6,1) circle (.3ex);
\draw[fill,red] (1,1) circle (.3ex);
\draw[fill,red] (3,4.5) circle (.3ex);

\end{tikzpicture}}

\hspace{.4in}
\adjustbox{valign=c}{
\begin{tikzpicture}[scale=.5]
\draw[red](0,0)--(-1,1)--(0,2)--(1,3)--(0,4);
\draw[red](0,0)--(1,1)--(2,2)--(1,3);
\draw[red] (-1,1)--(-2, 2)--(0,4);
\draw[fill,red] (0,0) circle (.2ex); \node[red] at (0,-.4) {$5$};
\draw[fill,red] (-1,1) circle (.2ex); \node[red] at (-1.3,.8) {$2$};
\draw[fill,red] (0,2) circle (.2ex); \node[red] at (0.1,1.7) {$4$};
\draw[fill,red] (1,3) circle (.2ex); \node[red] at (1.2,3.3) {$6$};
\draw[fill,red] (0,4) circle (.2ex); \node[red] at (0,4.4) {$3$};
\draw[fill,red] (1,1) circle (.2ex); \node[red] at (1.2,.7) {$7$};
\draw[fill,red] (2,2) circle (.2ex); \node[red] at (2.3,2) {$8$};
\draw[fill,red] (-2,2) circle (.2ex); \node[red] at (-2.2,2) {$1$};
\end{tikzpicture}}

\end{tabular}\caption{The rectangulation and adjacency poset obtained from the twisted Baxter permutation $52147863$.
}\label{rect}
\end{figure}
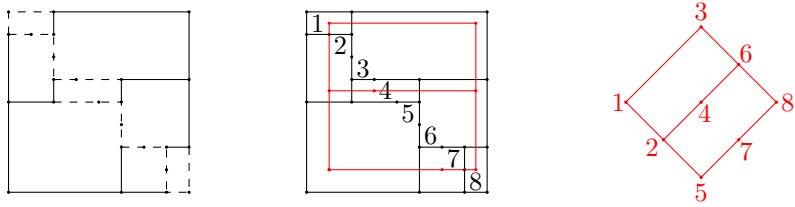

For our second result, which characterizes adjacency posets, we require the following definitions.
A poset $P$ is \emph{bounded} if it has an element that is greater than all other elements and an element that is less than all other elements.
Given a poset~$P$ on $[n]$, a \emph{2-14-3 chain} is a chain $b<_P a\lessdot_P d <_P c$ of $P$ such that $a<b<c<d$ in numerical order.
We similarly define a 3-14-2 chain, a 2-41-3 chain, and a 3-41-2 chain.

Given a partially ordered set $P$, construct a graph $G$ such that the vertices of~$G$ are labeled by the elements of $P$ and there is an edge joining vertex $x$ to vertex~$y$ if and only if $x\lessdot_P y$ or $y\lessdot_P x$.
An embedding of $G$ in $\mathbb{R}^2$ is a \emph{Hasse diagram} for~$P$ if and only if for all $x\lessdot_P y$, vertex $y$ is above vertex $x$ in the plane and each edge of the embedding is a line segment.
A \emph{planar embedding} of a poset $P$ is a Hasse diagram for $P$ in which no two edges intersect.

\begin{definition}\label{def:baxposet}
A poset $P$ on $[n]$ is a \emph{Baxter poset} if and only if it satisfies the following conditions:
\begin{enumerate}
\item $P$ is bounded.
\item If $x\in P$, then
$x$ is covered by at most two elements and covers at most two elements. 
\item $P$ contains no $2$-$14$-$3$, no $3$-$14$-$2,$ no $2$-$41$-$3,$ and no $3$-$41$-$2$ chains.
\item If $[x,y]$ is an interval of $P$ such that the open interval $(x,y)$ is disconnected, then
 $|x-y|=1$. 
\item There exists a planar embedding of $P$ such that for every interval $[x,y]$ of~$P$ with $(x,y)$ disconnected, if $w,z \in (x,y)$  and $w$ is left of~$z$, then $w<x<z$ in numerical order.  
\end{enumerate}
\end{definition}

Condition 2 of Definition \ref{def:baxposet} implies that every open interval $(x,y)$ of a Baxter poset consists of at most two connected components.
Condition 5 implies that if $(x,y)$ is disconnected, then the elements of one connected component are all smaller than $x$ and $y$ while the elements of the other connected component are larger than~$x$ and $y$.
We call the embedding described in Condition 5 of Definition \ref{def:baxposet} a \emph{natural embedding} of the Baxter poset. 

We can now state our main result. 
\begin{theorem}\label{thm:adjposet}
A poset $P$ is a Baxter poset if and only if it is the adjacency poset of a diagonal rectangulation.
\end{theorem}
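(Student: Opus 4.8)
The plan is to prove both directions by relating Baxter posets to twisted Baxter permutations through their sets of linear extensions, using the bijection between twisted Baxter permutations and diagonal rectangulations together with Theorem~\ref{thm:adjcovers}. First I would handle the easier direction: if $P$ is the adjacency poset of a diagonal rectangulation $D$, verify the five conditions of Definition~\ref{def:baxposet}. Boundedness (Condition 1) follows because rectangle $1$ touches the top-left corner and rectangle $n$ the bottom-right corner of the square, so every rectangle is $\leq$-comparable to both. For Condition 2, I would use Theorem~\ref{thm:adjcovers}: the covers of a rectangle $x$ correspond to the small list of local configurations in Figure~\ref{fig:adjcovers}, and one checks from the geometry that at most two rectangles sit ``above/right'' of $x$ in a covering configuration and at most two sit ``below/left.'' For Condition 3, I would argue that a $2$-$14$-$3$ (or $3$-$14$-$2$, $2$-$41$-$3$, $3$-$41$-$2$) chain in $P$ would force a $2$-$41$-$3$ or $3$-$41$-$2$ pattern in every linear extension of $P$, contradicting that the linear extensions of $P$ are exactly twisted Baxter permutations (which avoid those patterns). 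Conditions 4 and 5 would come from a direct geometric analysis: a disconnected open interval $(x,y)$ arises precisely when rectangles $x$ and $y$ meet along a shared horizontal segment with rectangles stacked on either side, forcing $|x-y|=1$, and the natural left-to-right order of those stacked rectangles along the diagonal gives the numerical ordering in Condition 5 (the ``left'' side being the ones pushed up and hence labeled before $x$, the ``right'' side labeled after $y$).

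For the converse, let $P$ be an abstract Baxter poset on $[n]$; I want to produce a diagonal rectangulation whose adjacency poset is $P$. The natural strategy is to show that the set $\mathcal{L}(P)$ of linear extensions of $P$ is the set of linear extensions of a (unique) diagonal rectangulation, equivalently that it is a union of the equivalence classes described at the end of the introduction. Concretely, I would pick any linear extension $\sigma$ of $P$ and prove (a) $\sigma$ is a twisted Baxter permutation, and (b) the adjacency poset $Q$ of the diagonal rectangulation $D = $ (twin-binary-tree image of $\sigma$) equals $P$. For (a), one shows that a $2$-$41$-$3$ or $3$-$41$-$2$ pattern in $\sigma$ would have to be ``witnessed'' by a corresponding forbidden chain in $P$ — this is where Conditions 2, 3, and 4 do their work, since the two middle entries of the pattern, being adjacent in $\sigma$, are forced by Condition 4 (if incomparable, $|x-y|=1$, impossible for the roles they play) to satisfy a cover relation, producing one of the forbidden chains of Condition 3. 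For (b), since $Q$ has $\mathcal{L}(Q) \supseteq \{\text{lin.\ exts.\ giving }D\} \ni \sigma$ and $Q$ is also a Baxter poset (by the forward direction), I would show $P$ and $Q$ have the same cover relations by comparing their natural embeddings: both are planar, both respect the numerical conditions of Condition 5, and both have the same bounded structure, so a rigidity argument forces $P = Q$.

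The cleanest route to (b) may instead be to build $D$ directly from $P$ rather than going through a linear extension. Using Condition 5, fix a natural embedding of $P$; I would construct twin binary trees $T_b, T_t$ by reading off, respectively, the ``left-leaning'' and ``right-leaning'' parts of the cover structure — roughly, $T_b$ records for each element its relations coming from the component below-left and $T_t$ from the component above-right in each disconnected interval, together with the connected-interval covers split according to the planar left/right embedding. The content of Theorem~\ref{thm:adjcovers} run in reverse tells me which covers of an adjacency poset come from which twin tree, so I would reverse-engineer that dictionary. Then I must check $T_b$ and $T_t$ are genuine sub-binary trees (this uses Condition 2 to bound the number of children and Conditions 4--5 to make the left/right assignment consistent and acyclic) and that they are ``twins'' in the sense required for the gluing map to produce a diagonal rectangulation (their underlying leaf sequences match — this is a parity/counting check).

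I expect the main obstacle to be direction (b)/the converse: showing that the combinatorial data packaged in Conditions 4 and 5 is exactly enough to reconstruct a consistent pair of twin binary trees, and in particular that no extra, non-geometric Baxter poset slips through. The delicate point is that a Baxter poset is defined by local conditions (covers, small chains, disconnected intervals) plus one global planarity condition, whereas a diagonal rectangulation is a global geometric object; bridging this gap requires showing the local-plus-planar data has no ``monodromy'' obstruction when assembled into the square. I anticipate that the linear-extension characterization — proving $\mathcal{L}(P)$ is precisely an intersection of a $\tau_b$-fiber with a $\tau_t$-fiber — is the sharpest tool, since it reduces everything to the already-understood Catalan picture, and that Condition 4 (the $|x-y|=1$ constraint) is the crucial hypothesis preventing spurious posets, as it is precisely what forces the two ``halves'' of a disconnected interval to come from a single shared horizontal wall in the rectangulation.
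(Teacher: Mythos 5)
Your forward direction (adjacency poset $\Rightarrow$ Baxter poset) follows the paper's outline closely, though you are vague about Condition~5: the paper obtains the natural embedding by showing the adjacency poset is a lattice (via verifying $x_a\vee x_r$ exists, using Lemma~\ref{lemma:join}), taking the two-element realizer $\{L_1,L_2\}$ from the endpoints of the right-weak-order interval $\rho^{-1}(D)$, and invoking Theorem~\ref{thm:planar}; your sketch of ``left-to-right order along the diagonal'' skips this machinery and would need to be made rigorous.

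The converse is where your plan has a genuine gap. Your step~(a) asserts that \emph{any} linear extension $\sigma$ of a Baxter poset $P$ is a twisted Baxter permutation, but this is false: even when $P$ is known to be an adjacency poset, the fiber $\rho^{-1}(D)$ is an interval of the right weak order containing many permutations, of which only the minimum is twisted Baxter, and the others do contain $2$-$41$-$3$ or $3$-$41$-$2$ patterns with the middle two entries adjacent. Your justification for (a) also misapplies Condition~4: you claim that adjacent entries $\sigma_j,\sigma_{j+1}$ in a pattern are ``forced by Condition~4 to satisfy a cover relation,'' but Condition~4 is a statement about intervals $[x,y]$ with $(x,y)$ disconnected, not about arbitrary incomparable pairs, and indeed incomparable adjacent entries are precisely what must occur if $P$ has more than one linear extension. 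The paper's actual converse works differently: it shows the set of linear extensions of $P$ is closed under $(2$-$41$-$3 \leftrightarrow 2$-$14$-$3)$ and $(3$-$41$-$2\leftrightarrow 3$-$14$-$2)$ moves (hence a union of fibers of $\rho$), and that any two linear extensions of $P$ differing by an adjacent transposition lie in the same fiber. Both halves hinge on a region-walking argument in the natural planar embedding: given incomparable elements $a$ and $\sigma_j$ with $a$ left of $\sigma_j$, one follows the right boundary of $S(a)$ down to $a\wedge\sigma_j$ and the left boundary of $S(\sigma_j)$ up, labeling the chain of intervening regions $R_1,\dots,R_m$; Condition~5 applied to each region yields the monotone chain $a<r_1<\cdots<r_m<\sigma_j$ in numerical order, and a parallel argument with $a\vee\sigma_{j+1}$ gives $a<\sigma_{j+1}$, which together rule out the forbidden patterns. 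This is the ``no monodromy'' bridge you correctly anticipated would be needed, but your proposal does not supply it, and your fallback of reverse-engineering twin binary trees from the cover structure is left at the level of a wish rather than a construction.
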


\begin{remark}
One might hope for an unlabeled version of the Baxter poset from which the labeled poset can be obtained, just as sub-binary trees have a canonical labeling.
However, without ``decorating" the poset with additional combinatorial information, this is not possible.
This is quickly apparent since, when $n=4$, of the 22 Baxter posets, 20 of these are chains.
Decorating each poset to indicate the numerical order of each pair $x<_P y$ with $(x,y)$ disconnected is insufficient.
Additionally, decorating every edge of the Hasse diagram to indicate the numerical order of the elements of the cover relation does not allow us to determine a unique Baxter poset.
\end{remark}

The original Baxter object, Baxter permutations, have a pattern avoidance definition similar to the definition of the twisted Baxter permutations.
A \emph{Baxter permutation} $\sigma=\sigma_1 \cdots \sigma_n$ is a permutation that avoids the patterns 2-41-3 and 3-14-2, i.e. there does not exist $i<j<j+1<k$ such that $\sigma_{j+1}<\sigma_i< \sigma_k<\sigma_j$ or $\sigma_j<\sigma_k<\sigma_i<\sigma_{j+1}$.
Given a diagonal rectangulation $D$, the set of permutations that map to $D$ under the binary tree gluing map contains a unique twisted Baxter permutation and a unique Baxter permutation (see Theorem \ref{thm:rho}). 
Other authors (see \cite[Proof of Lemma 8.4]{drec}, \cite[Proof of Lemma 6.6]{felsner}) have described algorithms for obtaining these permutations from a diagonal rectangulation.
Our final results describe how to obtain these pattern avoiding permutations directly from a Baxter poset. 
Here, we describe a method of obtaining the Baxter permutation.

Let $P$ be the natural embedding of a Baxter poset. 
The edges of the embedding separate the plane into maximal connected components.
We call the closure of a bounded connected component a \emph{region} of the embedding.
Assign an arrow to each region of the embedding as follows:  If the maximal element of a region is greater (in numerical order) than the minimal element of that region, then that region is assigned a right-pointing arrow, and otherwise the region is assigned a left-pointing arrow.
An example is shown in Figure \ref{baxperm}.
If a region $R_i$ contains a right-pointing arrow and $\sigma$ is a linear extension of $P$ in which all labels of elements contained in the left side of $R_i$ precede all labels of elements contained in the right side of~$R_i$, then we say that $\sigma$ \emph{respects the arrow of $R_i$}. 
Similarly, we say that~$\sigma$ respects the arrow of a region~$R_i$ containing a left-pointing arrow if all labels contained in the right side of $R_i$ precede all labels of elements contained in the left side of $R_i$.  
If~$\sigma$ respects the arrows of every region of~$P$, then we say that $\sigma$ \emph{respects the arrows of~$P$}.
The existence of a linear extension of $P$ that respects the arrows of $P$ should not be immediately obvious to the reader.

\begin{theorem}\label{thm:baxter}
Given a Baxter poset $P$ with its natural embedding, 
the unique Baxter permutation that is a linear extension of $P$ is the unique linear extension that respects the arrows of the embedding.
\end{theorem}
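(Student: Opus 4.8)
The plan is to translate the ``respects the arrows'' condition into a statement about the twin binary trees $T_b$ and $T_t$ (equivalently, the labeled sub-binary trees), and then to identify the unique linear extension respecting the arrows with the known Baxter permutation produced from $T_b$ and $T_t$. Recall from the excerpt that the linear extensions of the adjacency poset $P$ are exactly the intersection of the linear extensions of $T_b$ and of $T_t$, and that among these there is a unique Baxter permutation. So the entire theorem reduces to showing: (i) a linear extension of $P$ respecting the arrows of every region exists and is unique, and (ii) it coincides with the Baxter permutation. I would prove (i) and (ii) simultaneously by producing the Baxter permutation via a canonical recursive/greedy construction and checking it respects the arrows, then arguing uniqueness separately.

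First I would set up the local picture. By Theorem~\ref{thm:adjcovers}, each cover relation $x \lessdot_P y$ comes from one of the adjacency configurations of Figure~\ref{fig:adjcovers}, and by Conditions 4 and 5 of Definition~\ref{def:baxposet} the bounded regions of the natural embedding are precisely the ``diamonds'' coming from disconnected open intervals $(x,y)$ with $|x-y|=1$: such a region has a bottom vertex $x$, a top vertex $y$, a left chain through elements $<x$, and a right chain through elements $>x$. Thus assigning a right arrow exactly when $\max R_i > \min R_i$ is the same as asking whether the ``new'' rectangle that closes off the interval sits to the right or to the left — this is the combinatorial data distinguishing the $2$-$14$-$3$/$3$-$14$-$2$ situations from the $2$-$41$-$3$/$3$-$41$-$2$ situations. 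I would make this correspondence precise: a linear extension respects the arrow of $R_i$ iff, within that interval, it orders the two incomparable branches in the way forced by avoiding the patterns $2$-$41$-$3$ and $3$-$14$-$2$ that define Baxter permutations. Concretely, respecting a right arrow on the diamond $[x,y]$ forbids placing any right-branch element (value $>x$) before a left-branch element (value $<x$) in a way that would create a $3$-$14$-$2$ with the pair $x<y$; respecting a left arrow symmetrically rules out the $2$-$41$-$3$ obstruction.

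With that dictionary in place, existence of a linear extension respecting all arrows follows from the fact that the Baxter permutation associated to the diagonal rectangulation $D$ is a linear extension of $P$ (it lies in the common refinement of $T_b$ and $T_t$) together with the verification that it respects every arrow — the latter because if it violated the arrow of some region, the diamond structure of that region together with Theorem~\ref{thm:adjcovers} would exhibit an honest occurrence of the pattern $2$-$41$-$3$ or $3$-$14$-$2$ in the permutation, contradicting that it is Baxter. For uniqueness I would argue the converse: suppose $\sigma$ is any linear extension of $P$ respecting all arrows; I claim $\sigma$ avoids $2$-$41$-$3$ and $3$-$14$-$2$. Given a putative occurrence $\sigma_i,\sigma_j,\sigma_{j+1},\sigma_k$ of such a pattern, the two ``inner'' values $\sigma_j,\sigma_{j+1}$ being adjacent in one-line notation but comparable-via-covers in $P$ (because they bracket the outer values) forces them to be the bottom and top of a disconnected interval, hence of a region $R_i$, with the outer values $\sigma_i,\sigma_k$ lying in its two branches; the order in which $\sigma$ lists those branches then contradicts the arrow of $R_i$. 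Hence $\sigma$ is a Baxter permutation, and since $D$ has a \emph{unique} Baxter linear extension, $\sigma$ is unique and equals the one from (i).

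The main obstacle I anticipate is step (i)'s geometric bookkeeping: making airtight the claim that every bounded region of the natural embedding is exactly a single ``diamond'' $[x,y]$ with $(x,y)$ disconnected and $|x-y|=1$, and that no other bounded faces arise (e.g.\ from nested or stacked diamonds, or from the interplay of several cover relations around one vertex allowed by Condition 2). This requires carefully using planarity of the natural embedding together with Conditions 2, 4, and 5, and checking that the ``left side / right side'' of a region are well-defined chains so that ``respecting the arrow'' is an unambiguous constraint on $\sigma$. Once the regions are pinned down to diamonds, the pattern-avoidance translation and the uniqueness argument are essentially forced; but if regions could be more complicated, the notion of respecting an arrow — and hence the whole statement — would need extra care, so that is where I would concentrate the detailed work.
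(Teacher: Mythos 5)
Your overall strategy — translate ``respects the arrows'' into pattern-avoidance and use the known uniqueness of the Baxter linear extension — is the right high-level idea, and your description of regions as ``diamonds'' $[x,y]$ with $(x,y)$ disconnected and $|x-y|=1$ is correct. But there is a genuine gap in the uniqueness half, and a conflation of two different pattern conditions runs through the whole sketch.

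The issue is that a vincular occurrence $\sigma_i\,\sigma_j\,\sigma_{j+1}\,\sigma_k$ of $2$-$41$-$3$ or $3$-$14$-$2$ does \emph{not} force $\sigma_j$ and $\sigma_{j+1}$ to be the bottom and top of a disconnected interval. In a linear extension, adjacent entries are either incomparable or form a single cover. If they are incomparable, they sit on the two \emph{branches} of the region whose top and bottom are $\sigma_j\vee\sigma_{j+1}$ and $\sigma_j\wedge\sigma_{j+1}$; and whether the ordering $\sigma_j$-before-$\sigma_{j+1}$ is consistent with that region's arrow depends on whether the join or the meet has the larger value, which the pattern occurrence alone does not pin down. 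If they are comparable, $\sigma_j\lessdot_P\sigma_{j+1}$ is a single edge of the Hasse diagram, not a region, so there is no arrow to violate. Meanwhile the outer entries $\sigma_i,\sigma_k$ of your pattern need not lie in any interval between $\sigma_j$ and $\sigma_{j+1}$, so they cannot be placed ``in its two branches.'' Thus the step ``pattern occurrence $\Rightarrow$ arrow violation'' does not go through as written.

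The paper avoids this by using a different characterization of Baxter permutations (Lemma~\ref{lemma:4-1to41}): $\sigma$ is Baxter iff it has no (non-vincular) $2$-$4$-$1$-$3$ or $3$-$1$-$4$-$2$ subsequence whose \emph{outer} two values are consecutive integers. That statement matches an arrow violation exactly: the region's $\min$ and $\max$ are consecutive values (Condition~4) and sit in the outer positions, while the two branch elements sit inside. This gives Lemma~\ref{lemma:xnonempty} (the Baxter permutation respects the arrows) cleanly. For uniqueness, however, even the paper does not argue ``respects arrows $\Rightarrow$ avoids these patterns'' directly; instead it shows respects-arrows implies the wall-shuffle conditions of Lemma~\ref{lemma:wallcond} (Lemma~\ref{lemma:arrow}), and then invokes generic rectangulations and the map $\gamma$ (Lemma~\ref{lemma:prop}) to see that within a fiber of $\rho$ the wall shuffles determine the permutation uniquely. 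If you want to pursue the more direct pattern-avoidance route for uniqueness you would have to contend with the possibility of a $2$-$4$-$1$-$3$ (consecutive outer values) occurrence whose inner two entries are \emph{not} adjacent in $\sigma$ and are not confined to a single region; that is precisely the case the generic-rectangulation detour handles and your sketch does not address.
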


By adding a single relation for each region of the natural embedding of $P$, we obtain an alternate description of the map from an adjacency poset to its Baxter permutation.
Specifically, for each region $R$ with minimal element $x$ and maximal element $x+1$ we declare that the maximal element (with respect to the partial order~$P$) of the left component of $(x,y)$ is less than the minimal element of the right component. 
Similarly, for each region $R$ with maximal element $x$ and minimal element $x+1$, we declare that the maximal element of the right component of $(x,y)$ is less than the minimal element of the left component.
By Theorem \ref{thm:baxter}, the resulting partial order is a total order on $[n]$ and this total order is a Baxter permutation.

\begin{figure}
\begin{center}
\begin{tikzpicture}[scale=.3]
\draw (0,0)--(0,12)--(12,12)--(12,0)--(0,0);
\draw (0,2)--(12,2);
\draw (11, 0)--(11,2);
\draw (7, 2)--(7,12);
\draw (9, 2)--(9, 12);
\draw (7, 4)--(9,4);
\draw (0,8)--(6,8)--(6,2);
\draw(5,2)--(5,8);
\draw (1,8)--(1,12);
\draw (1, 9)--(7,9);
\draw (2, 9)--(2,12);
\draw (6, 8)--(6,9);
\node at (.5,11.5) {\footnotesize $1$};
\node at (1.5, 10.5) {\footnotesize $2$};
\node at (2.5, 9.5) {\footnotesize $3$};
\node at (3.5, 8.5) {\footnotesize $4$};
\node at (4.5, 7.5) {\footnotesize $5$};
\node at (5.5, 6.5) {\footnotesize $6$};
\node at (6.5, 5.5) {\footnotesize $7$};
\node at (7.5, 4.5) {\footnotesize $8$};
\node at (8.5, 3.5) {\footnotesize $9$};
\node at (9.5, 2.5) {\footnotesize $10$};
\node at (10.5, 1.5) {\footnotesize $11$};
\node at (11.5, .5) {\footnotesize $12$};

\draw [red] (23, 1)--(18+5/4*2, 1+5/4*2)--(18+5/4*3, 1+5/4*3)--(18+5/4*2, 1+5/4*4)--(18+5/4*3, 1+5/4*5)--(18+5/4*4, 1+5/4*6)--(18+5/4*3, 1+5/4*7)--(18+5/4*4, 1+5/4*8);
\draw [red] (18+5/4*2, 1+5/4*2)--(18+5/4*1, 1+5/4*3)--(18+5/4*2, 1+5/4*4);
\draw[red] (18+5/4*2, 1+5/4*4)--(18+5/4*1, 1+5/4*5)--(18+5/4*2, 1+5/4*6);
\draw[red] (18+5/4*3, 1+5/4*5)--(18+5/4*2, 1+5/4*6)--(18+5/4*3, 1+5/4*7);
\draw[red] (23,1)--(28,6)--(23,11);
\draw[fill, red] (23,1) circle (.3ex); \node[red] at (23,1-.5) {\footnotesize $11$};
\draw[fill, red] (18+5/4*2, 1+5/4*2) circle (.3ex); \node[red] at (18+5/4*2-.3, 1+5/4*2-.3) {\footnotesize $5$};
\draw[fill, red] (18+5/4*3, 1+5/4*3) circle (.3ex); \node[red] at (18+5/4*3+.4, 1+5/4*3) {\footnotesize $6$};
\draw[fill, red] (18+5/4*2, 1+5/4*4) circle (.3ex);\node[red] at (18+5/4*2-.6, 1+5/4*4) {\footnotesize $4$};
\draw[fill, red] (18+5/4*3, 1+5/4*5) circle (.3ex); \node[red] at (18+5/4*3+.35, 1+5/4*5-.3) {\footnotesize $7$};
\draw[fill, red] (18+5/4*4, 1+5/4*6) circle (.3ex);  \node[red] at (18+5/4*4+.4, 1+5/4*6) {\footnotesize $9$};
\draw[fill, red] (18+5/4*3, 1+5/4*7) circle (.3ex); \node[red] at (18+5/4*3-.3, 1+5/4*7+.3) {\footnotesize $8$};
\draw[fill, red] (18+5/4*4, 1+5/4*8) circle (.3ex);  \node[red] at (18+5/4*4, 1+5/4*8+.5) {\footnotesize $10$};
\draw[fill, red] (18+5/4*1, 1+5/4*3) circle (.3ex);  \node[red] at (18+5/4*1-.4, 1+5/4*3) {\footnotesize $1$};
\draw[fill, red] (18+5/4*1, 1+5/4*5) circle (.3ex);  \node[red] at (18+5/4*1-.4, 1+5/4*5) {\footnotesize $2$};
\draw[fill, red] (18+5/4*2, 1+5/4*6) circle (.3ex);  \node[red] at (18+5/4*2-.3, 1+5/4*6+.3) {\footnotesize $3$};
\draw[fill, red] (28,6) circle (.3ex);  \node[red] at (28+.5,6) {\footnotesize $12$};
\draw[<-, thick] (18+5/4*3.5, 1+5/4*6)--(18+5/4*2.5, 1+5/4*6);
\draw[<-, thick] (18+5/4*1.5, 1+5/4*3)--(18+5/4*2.5, 1+5/4*3);
\draw[<-, thick] (18+5/4*1.5, 1+5/4*5)--(18+5/4*2.5, 1+5/4*5);
\draw[<-, thick] (18+5/4*4.5, 6)--(18+5/4*5.5, 6);

\draw (17, -1) circle (.65cm);
\draw (18.6, -1) circle (.65cm);
\draw (29, -1) circle (.65cm);

\node at (23, -1) {11  {12}  5 {6} 1 {4} 7 {2} 3 {9} 8 {10}};
\end{tikzpicture}
\end{center}\caption{A diagonal rectangulation, the corresponding adjacency poset with an arrow assigned to each region, and the Baxter permutation obtained using Theorem \ref{thm:baxter}}\label{baxperm}
\end{figure}
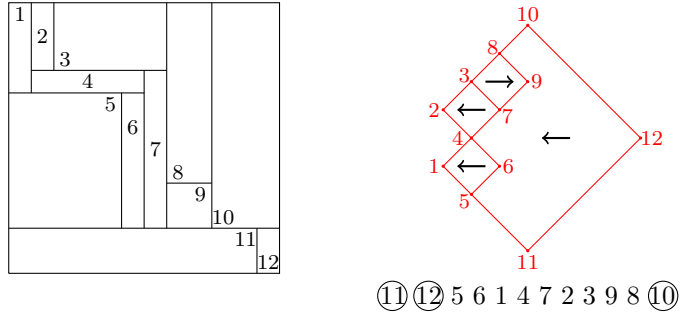

In Section \ref{definitions}, we describe the map $\rho$ from permutations to diagonal rectangulations that coincides with the binary tree gluing map already described and provide some background related to diagonal rectangulations.
We prove Theorem \ref{thm:adjcovers} (the characterization of the cover relations of the adjacency poset) in Section~\ref{adjposet}.
Our main result, Theorem~\ref{thm:adjposet}, is proved in Section \ref{bax}.
Finally, in Section \ref{perms}, we describe how to obtain a twisted Baxter permutation from a Baxter poset and then prove Theorem \ref{thm:baxter}.

\section{Diagonal Rectangulations}\label{definitions}

A \emph{rectangulation of size n} is an equivalence class of decompositions of a square~$S$ into $n$ rectangles. 
Two decompositions $R_1$ and $R_2$ are members of the same equivalence class if and only if there exists a homeomorphism of the square, fixing its vertices, that takes $R_1$ to $R_2$. 
We say that a rectangulation is a \emph{diagonal rectangulation} if, for some representative of the equivalence class, the top-left to bottom-right diagonal of $S$ contains an interior point of each rectangle of the decomposition. 
In our discussion of diagonal rectangulations, we often blur the distinction between an equivalence class and a representative of the equivalence class.
We most often refer to a diagonal rectangulation using the distinguished representative with edges intersecting the diagonal in equally spaced points.

We now define a map $\rho$ from $S_n$ to the set of diagonal rectangulation of size~$n$.  
Figure~\ref{fig:rho} shows the construction of $\rho(23154)$.
The map $\rho$ agrees with the map (described in Section \ref{intro}) in which a diagonal rectangulation is constructed from a permutation by gluing together twin binary trees and then rotating the result.
Our description of $\rho$ matches the description in \cite[Section 6]{drec} and is essentially equivalent to maps described in  \cite[Section 3]{Ackerman2}, \cite[Section 4]{Ackerman}, and \cite[Section 5]{felsner}.

Let $\sigma=\sigma_1 \cdots \sigma_n \in S_n$ and $S$ a square in $\mathbb{R}^2$ with bottom-left vertex at~$(0,0)$ and top-right vertex at $(n,n)$.
Place $n+1$ points at $(i,  n-i)$ for $i\in \{0,\ldots,n\}$.
Label each of the~$n$ spaces between these points in order with an element of $[n]$, starting with~1 in the upper-leftmost space and finishing with $n$ in the lower-rightmost space.
We construct~$\rho(\sigma)$ by considering the entries of $\sigma$ sequentially from left to right.
Let~$T_{i-1}$ denote the union of the left and lower boundaries of $S$ and the rectangles of $\rho(\sigma)$ constructed using the first $i-1$ entries of $\sigma$.  
In step $i$ of the construction, we form a new rectangle that contains the diagonal label $\sigma_i$.
We refer to this rectangle as \emph{rectangle $\sigma_i$}.
We construct rectangle $\sigma_i$ as follows.
If the point $u=(\sigma_i-1, n-(\sigma_i-1))$ is contained in~$T_{i-1}$, then place the upper-left corner of rectangle $\sigma_i$ so that it coincides with the uppermost point on the segment of~$T_{i-1}$ containing~$u$.
Otherwise, the upper-left corner of rectangle $\sigma_i$ is the first point of $T_{i-1}$ hit by the left-pointing horizontal ray with base point at $u$.
If the point $l=(\sigma_i, n-\sigma_i)$ is contained in $T_{i-1}$, then place the lower-right corner of rectangle~$\sigma_i$ so that it coincides with the rightmost point on the segment of $T_{i-1}$ containing~$l$.
Otherwise, the lower-right corner of rectangle $\sigma_i$ is the first point of $T_{i-1}$ hit by the downward pointing vertical ray with base point at~$l$.
In the arguments that follow, we will use the observation that, by construction, the left side and bottom of rectangle $\sigma_i$ are contained in~$T_{i-1}$ for all $i \in [n]$.
We will also use the observation that, since the interior of each rectangle of a diagonal rectangulation $D$ intersects the upper-left to bottom-right diagonal of $S$, no set of four rectangles of $D$ share a vertex.

\begin{theorem}[{\cite[Theorem 6.1, Corollary 8.7]{drec}}] \label{thm:rho}
The map $\rho$ restricts to a bijection between twisted Baxter permutations and diagonal rectangulations. 
The map $\rho$ also restricts to a bijection between Baxter permutations and diagonal rectangulations.
\end{theorem}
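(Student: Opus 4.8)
The plan is to exploit the factorization of $\rho$ through the pair $(\tau_b,\tau_t)$. As noted in the text, $\rho$ coincides with the twin-binary-tree gluing map; I would first include a proof of this by a step-by-step comparison of the two constructions on the left-to-right reading of $\sigma$, matching the partial rectangulation $T_{i-1}$ against the partial triangulations $P_i$ built by $\tau_b$ and $\tau_t$. Granting this, $\rho(\sigma)=\rho(\psi)$ exactly when $\tau_b(\sigma)=\tau_b(\psi)$ and $\tau_t(\sigma)=\tau_t(\psi)$, so the fiber $\rho^{-1}(D)$ is the set of permutations that are linear extensions of both labeled sub-binary trees $T_b,T_t$ attached to $D$, i.e. the set of linear extensions of the adjacency poset of $D$. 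In particular every fiber is nonempty, so $\rho$ maps onto the diagonal rectangulations (its image consists of diagonal rectangulations by the diagonal property built into the construction). I would also record that each fiber is an interval $[\hat\sigma,\check\sigma]$ of the weak order on $S_n$: the fibers of $\tau_b$ and of $\tau_t$ are the classes of lattice congruences of the weak order (the sylvester congruence and its mirror), so the fibers of $\rho$ are the classes of their meet and hence are again congruence classes, therefore intervals.

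For the twisted Baxter statement the goal is to identify the twisted Baxter permutations with the minima $\hat\sigma$ of the fibers. The technical core is a swap lemma: if $\sigma_j>\sigma_{j+1}$, then transposing the entries in positions $j$ and $j+1$ (a single downward cover in the weak order) preserves $\tau_b(\sigma)$ if and only if there is some $i<j$ with $\sigma_{j+1}<\sigma_i<\sigma_j$, and preserves $\tau_t(\sigma)$ if and only if there is some $k>j+1$ with $\sigma_{j+1}<\sigma_k<\sigma_j$; I would prove this by tracking, in the binary search tree describing $\tau_b$, whether the entries $\sigma_j$ and $\sigma_{j+1}$ become ancestor and descendant, and deduce the $\tau_t$ case by reverse-complement symmetry. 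Consequently a cover below $\sigma$ lies in the same fiber if and only if some descent of $\sigma$ is flanked both on the left and on the right by entries strictly between $\sigma_{j+1}$ and $\sigma_j$, i.e. if and only if $\sigma$ contains a $2$-$41$-$3$ or a $3$-$41$-$2$ pattern. Since a fiber is an interval, $\sigma$ is its minimum exactly when no downward cover stays in the fiber; combining, the fiber minima are precisely the twisted Baxter permutations. As the fibers partition $S_n$, these minima are distinct and exhaust all fibers, so $\rho$ restricts to a bijection from twisted Baxter permutations to diagonal rectangulations.

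For the Baxter statement I would not try to realize Baxter permutations as extremal elements of the fibers, since they are in general neither $\hat\sigma$ nor $\check\sigma$; instead I would exhibit a fiber-preserving bijection $\beta$ from Baxter permutations to twisted Baxter permutations and then compose it with the bijection just obtained. To construct $\beta$ it suffices to show that each fiber of $\rho$ contains exactly one permutation avoiding $2$-$41$-$3$ and $3$-$14$-$2$ (the Baxter condition): using the swap lemma to determine which adjacent transpositions within a fiber create or destroy occurrences of these two patterns, one checks that repeatedly removing such occurrences terminates and yields an output independent of the choices made, by a confluence argument inside the interval $[\hat\sigma,\check\sigma]$. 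Equivalently, one may invoke that the fibers of $\rho$ are exactly the congruence classes of the Baxter monoid \cite{giraudo}, each of which contains a unique Baxter permutation; then $\beta$ sends the Baxter permutation of a class to its twisted Baxter permutation, and $\rho$ restricted to Baxter permutations equals the restriction to twisted Baxter permutations precomposed with $\beta$.

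The step I expect to be the main obstacle is twofold. First, establishing that the fibers of $\rho$ are intervals of the weak order — equivalently, that the meet of the sylvester congruence and its mirror is again a lattice congruence — must be handled with the order-theoretic machinery for lattice congruences rather than combinatorially. Second, the uniqueness of the Baxter representative of a fiber is a genuinely non-extremal normal-form statement and so requires the confluence argument above (or the corresponding structural fact for the Baxter monoid). I would therefore isolate the swap lemma as the shared engine of both halves of the theorem, prove it once and carefully for $\tau_b$, and obtain everything else from it together with the interval structure of the fibers.
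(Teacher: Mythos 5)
The paper does not prove this statement; it is quoted verbatim from \cite{drec} (Theorem~6.1 and Corollary~8.7), and the present paper relies on it as an external input. So there is no proof in this paper to compare against; what you have written is a sketch of how one might reconstruct the argument from the literature. That said, your outline does track the strategy in \cite{drec} fairly closely, and nothing in it is wrong in direction, so it is worth noting where you are essentially re-deriving results the paper also cites and where the real work lies.

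Your ``swap lemma'' is, once packaged, exactly \cite[Proposition~6.3]{drec} (quoted in this paper as Proposition~\ref{prop:equivclass}), and the identification of twisted Baxter permutations with the weak-order minima of the fibers is \cite[Proposition~4.5]{drec} (quoted here as Proposition~\ref{prop:minmax}). Your observation that the fibers of $\rho$ are intervals because they are congruence classes (the meet of the sylvester congruence and its mirror is again a congruence, and classes of congruences on the weak order are intervals) is the mechanism the paper itself invokes, citing \cite{Lat}. So the twisted Baxter half of your plan is coherent and amounts to reassembling the same two propositions from \cite{drec} plus the congruence-interval fact; you would still need to actually prove the swap lemma from the definitions of $\tau_b$, $\tau_t$ rather than assert it, and to check the stated directions carefully, since it is easy to get the left/right witness conditions transposed.

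The genuinely hard step is the one you flag: uniqueness of the Baxter representative in each fiber. Your two proposed routes (a confluence argument inside the interval $[\hat\sigma,\check\sigma]$, or invoking the Baxter monoid structure from \cite{giraudo}) are both legitimate, but the confluence argument is not a formality --- it is roughly the content of Section~8 of \cite{drec}, and one has to be careful because the Baxter permutation is not an endpoint of the interval, so local pattern-removal steps can wander. Note also that the present paper uses a different device for the Baxter case downstream: Lemma~\ref{lemma:prop} pins down the Baxter permutation via the generic-rectangulation map $\gamma$ and wall shuffles, which gives a uniqueness proof that avoids confluence entirely. If you want a self-contained proof of the second sentence of the theorem, adapting that wall-shuffle argument would likely be cleaner than a rewriting-system confluence argument.
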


Given a rectangulation $R$, a line segment that is not contained in the boundary of $S$ and is a maximal (with respect to inclusion) union of edges of rectangles is called a \emph{wall} of $R$.

Recall that a permutation $\sigma$ is a twisted Baxter permutation if and only if it avoids the patterns 2-41-3 and 3-41-2.
This pattern avoidance is equivalent to the requirement that if $\sigma_i>\sigma_{i+1}$ then either all values numerically between~$\sigma_{i+1}$ and~$\sigma_i$ are left of~$\sigma_i$ in $\sigma$, or all of these values are right of~$\sigma_{i+1}$ in $\sigma$.

We say that two permutations $\sigma$ and $\psi$ are related by a ($3$-$14$-$2 \leftrightarrow 3$-$41$-$2$) \emph{move} if~$\sigma$ contains a subsequence $\sigma_{i_1} \sigma_{i_2} \sigma_{i_3} \sigma_{i_4}$ that is an occurrence of one of these patterns and switching the positions of the adjacent entries $\sigma_{i_2}$ and $\sigma_{i_3}$ in $\sigma$ results in the permutation $\psi$.  
We say that~$\sigma$ and $\psi$ are related by a \emph{$(2$-$14$-$3 \leftrightarrow 2$-$41$-$3)$ move} if $\sigma$ and $\psi$ satisfy the same conditions with these patterns. 

\begin{proposition}[{\cite[Proposition 6.3]{drec}}]\label{prop:equivclass}
Two permutations $\sigma$ and $\psi$ satisfy $\rho(\sigma)=\rho(\psi)$ if and only if they are related by a sequence of $(3$-$14$-$2 \leftrightarrow 3$-$41$-$2)$ moves and  ${(2\text{-}14\text{-}3} \leftrightarrow {2\text{-}41\text{-}3})$ moves. 
\end{proposition}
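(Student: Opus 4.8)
The plan is to prove the two implications separately; essentially all of the content is in the ``if'' direction (that a single move preserves $\rho$), after which the ``only if'' direction follows by a short formal argument using Theorem~\ref{thm:rho}.

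\textbf{The ``if'' direction.} It suffices to show that if $\psi$ is obtained from $\sigma$ by one move then $\rho(\sigma)=\rho(\psi)$; the general case follows by composing. I treat a $(3$-$14$-$2 \leftrightarrow 3$-$41$-$2)$ move at positions $i_1<i_2<i_3=i_2+1<i_4$; the $(2$-$14$-$3 \leftrightarrow 2$-$41$-$3)$ case is symmetric. Write $j=i_2$, let $a<b$ be the two values occupying positions $j,j+1$ in $\sigma$ (hence in $\psi$, in the other order), and set $c=\sigma_{i_1}$, $d=\sigma_{i_4}$; the pattern forces $a<d<c<b$. Since $\sigma$ and $\psi$ agree on positions $1,\dots,j-1$, the partial figure $T_{j-1}$ is common to both constructions. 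In $\sigma$ one builds rectangle $a$ and then rectangle $b$ on top of $T_{j-1}$, in $\psi$ rectangle $b$ and then rectangle $a$, and every later step of $\rho$ depends only on the figure built so far together with the next (common) entry. So it is enough to prove that the figure obtained after building both rectangles does not depend on the order --- equivalently, that the rectangle produced for $a$ is the same whether or not rectangle $b$ has already been placed in $T_{j-1}$, and symmetrically with $a$ and $b$ interchanged; then $T_{j+1}$ is order-independent and $\rho(\sigma)=\rho(\psi)$.

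\textbf{The geometric core (the main obstacle).} The placement of rectangle $a$ is determined by $T_{j-1}$ only near the endpoints $u_a=(a-1,\,n-(a-1))$ and $l_a=(a,\,n-a)$ of diagonal space $a$, via the two sub-rules of the construction (land on the existing segment of $T_{j-1}$ through that point, or shoot a leftward horizontal ray, resp.\ a downward vertical ray, until hitting $T_{j-1}$); similarly rectangle $b$ is determined near $u_b=(b-1,\,n-(b-1))$ and $l_b=(b,\,n-b)$. I would show that inserting rectangle $b$ into $T_{j-1}$ alters $T_{j-1}$ only within a region that, because of the hypotheses on $c$ and $d$, is walled off from the portions of $T_{j-1}$ governing rectangle $a$, and conversely. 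The two facts to extract from $a<d<c<b$ are: (i) rectangle $c$ has already been built (it is rectangle $\sigma_{i_1}$ with $i_1<j$) and its column occupies diagonal positions strictly between $a$ and $b$, so the right side of rectangle $a$ cannot reach the left side of rectangle $b$; and (ii) the diagonal position $d$ lies strictly between $a$ and $c$ and is \emph{not} yet occupied at step $j$ (rectangle $d=\sigma_{i_4}$ is built at step $i_4>j+1$), which is exactly what prevents the lower-right corner of rectangle $a$ from being dragged rightward along a horizontal wall as far as rectangle $b$'s column --- precisely the failure that occurs for non-moves such as $213\mapsto 231$. Establishing this amounts to a finite case check over which sub-rule applies at each of the four corners $u_a,l_a,u_b,l_b$, verifying in each case that the relevant segment of $T_{j-1}$ is unchanged by the other insertion. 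I expect this case analysis to be the technical heart of the argument.

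\textbf{The ``only if'' direction.} Suppose $\rho(\sigma)=\rho(\psi)$. Apply moves to $\sigma$ in the direction that removes occurrences of the twisted-Baxter-forbidden patterns: whenever $\sigma$ contains a $3$-$41$-$2$ occurrence $\sigma_{i_1}\sigma_{i_2}\sigma_{i_3}\sigma_{i_4}$ (so $i_3=i_2+1$), the adjacent entries satisfy $\sigma_{i_2}>\sigma_{i_3}$, and the move swapping them converts this to $3$-$14$-$2$ while deleting exactly one inversion; likewise a $(2$-$14$-$3 \leftrightarrow 2$-$41$-$3)$ move eliminates a $2$-$41$-$3$ occurrence and one inversion. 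Since the inversion count strictly decreases and is bounded below, the process terminates at a permutation $\tau_\sigma$ having no $2$-$41$-$3$ and no $3$-$41$-$2$ occurrence, i.e.\ a twisted Baxter permutation; by the ``if'' direction, $\rho(\tau_\sigma)=\rho(\sigma)$. Running the same procedure on $\psi$ produces a twisted Baxter permutation $\tau_\psi$ with $\rho(\tau_\psi)=\rho(\psi)=\rho(\sigma)$. By Theorem~\ref{thm:rho}, $\rho$ is injective on twisted Baxter permutations, so $\tau_\sigma=\tau_\psi$; hence $\sigma$ and $\psi$ are each related to this common permutation by a sequence of moves, and therefore to each other.
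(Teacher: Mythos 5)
The paper does not prove this proposition; it imports it verbatim from \cite[Proposition 6.3]{drec}, where it is established via the lattice-congruence structure on the weak order (fibers of $\rho$ are shown to be congruence classes, and the moves are identified with the cover relations within a class). There is therefore no internal proof to compare against, and your direct geometric route is genuinely different from the source's.

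Your ``only if'' reduction is clean and, modulo the ``if'' direction, correct: normalizing both $\sigma$ and $\psi$ by inversion-decreasing moves terminates at twisted Baxter permutations, which the ``if'' direction would place in the same fiber and Theorem~\ref{thm:rho} then identifies. But the ``if'' direction --- that a single move does not change $\rho$ --- is where essentially all of the content of the proposition lives, and you have not established it. You correctly reduce it to the claim that $T_{j+1}$ is independent of the order in which rectangles $a$ and $b$ are inserted, and you correctly isolate the roles of $c$ (already placed, separating the two insertions) and $d$ (not yet placed, limiting how far corners can slide). But you then defer the verification to ``a finite case check'' you ``expect'' to work. Until that case check is actually carried out --- for each of $u_a, l_a, u_b, l_b$, under both sub-rules of the construction, together with a real check that the $(2\text{-}14\text{-}3 \leftrightarrow 2\text{-}41\text{-}3)$ case is the reflection of the $(3\text{-}14\text{-}2 \leftrightarrow 3\text{-}41\text{-}2)$ case rather than merely analogous --- what you have is a plausible plan, not a proof. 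This is not a cosmetic omission: it is the entire substance of the statement, and the non-move example $213 \mapsto 231$ you cite shows the claim is delicate and really does depend on the presence of $c$ and the absence of $d$.
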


Given $\psi\in S_n$, define $\text{inv}(\psi)=\{(\psi_i, \psi_j) \ | \ i<j \text{ and } \psi_i>\psi_j\}$.
If $\sigma, \psi \in S_n$  then we say that $\sigma\leq\psi$ in the \emph{right weak order} if and only if $\text{inv}(\sigma)\subseteq \text{inv}(\psi)$. 
This definition implies that $\sigma \lessdot \psi$ in the right weak order if and only if $\psi$ can be obtained from $\sigma$ by transposing adjacent entries $\sigma_i$ and $\sigma_{i+1}$ of $\sigma$ which satisfy $\sigma_i<\sigma_{i+1}$ in numerical order.

\begin{proposition}[{\cite[Proposition 4.5]{drec}}]\label{prop:minmax}
Let $D$ be a diagonal rectangulation and $\sigma\in S_n$ such that $\rho(\sigma)=D$.
Then $\sigma$ is a twisted Baxter permutation if and only if~$\sigma$ is the minimal element of the right weak order such that $\rho(\sigma)=D$.  
\end{proposition}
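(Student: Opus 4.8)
The plan is to extract the structure of the fibers of $\rho$ from Proposition~\ref{prop:equivclass}, and then to identify the minimal element of a fiber using the bijectivity in Theorem~\ref{thm:rho}. Fix $D$ and set $F=\rho^{-1}(D)\subseteq S_n$; since $\rho(\sigma)=D$ this is a finite nonempty set, partially ordered by the restriction of the right weak order.

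First I would record the fact that makes the moves of Proposition~\ref{prop:equivclass} interact cleanly with the right weak order. Each such move transposes two \emph{adjacent} entries, and reading it in the direction $3\text{-}41\text{-}2\to 3\text{-}14\text{-}2$ (respectively $2\text{-}41\text{-}3\to 2\text{-}14\text{-}3$) transposes an adjacent pair $\psi_j,\psi_{j+1}$ with $\psi_j>\psi_{j+1}$; transposing an adjacent descent deletes exactly the inversion $(\psi_j,\psi_{j+1})$ from $\text{inv}(\psi)$ and changes no other inversion, so the result lies strictly below $\psi$ in the right weak order. Using this I would prove the claim: \emph{if $\mu\in F$ is not a twisted Baxter permutation, then $\mu$ is not a minimal element of $F$.} Indeed, by definition $\mu$ then has an occurrence of $2\text{-}41\text{-}3$ or $3\text{-}41\text{-}2$; performing the corresponding move in the inversion-decreasing direction produces $\mu'$ with $\text{inv}(\mu')\subsetneq\text{inv}(\mu)$, hence $\mu'<\mu$, while Proposition~\ref{prop:equivclass} gives $\rho(\mu')=\rho(\mu)=D$, so $\mu'\in F$. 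The contrapositive says that every minimal element of $F$ is a twisted Baxter permutation.

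Now $F$ is finite and nonempty, so it has at least one minimal element, and by the claim each such element is a twisted Baxter permutation; in particular $F$ contains a twisted Baxter permutation. On the other hand, Theorem~\ref{thm:rho} says $\rho$ restricts to an injection on twisted Baxter permutations, so $F$ contains at most one twisted Baxter permutation. Hence $F$ contains exactly one, say $\mu$, and therefore $F$ has a unique minimal element, namely $\mu$; since a finite poset with a unique minimal element has that element as its least element, $\mu=\min F$. Finally, for $\sigma\in F$ we conclude that $\sigma$ is a twisted Baxter permutation $\iff \sigma=\mu \iff \sigma=\min F \iff \sigma$ is the minimal element of the right weak order with $\rho(\sigma)=D$, which is exactly the proposition.

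The only place that needs genuine care is the bookkeeping in the claim: one must check against the pattern definitions that an occurrence of $3\text{-}41\text{-}2$ (respectively $2\text{-}41\text{-}3$) really presents an adjacent descent, that transposing it keeps us inside the fiber $F$ (this is precisely the ``only if'' direction of Proposition~\ref{prop:equivclass}), and that this transposition removes exactly one element of the inversion set. None of this is deep. In particular, the potentially subtle point — why the twisted Baxter element of a fiber lies \emph{below} every other permutation of that fiber, rather than merely being a minimal element — is sidestepped: instead of analyzing the graph of moves on $F$ directly, we deduce uniqueness of the minimal element from the bijectivity in Theorem~\ref{thm:rho}. (A self-contained alternative would be to show directly that $F$ has a unique element admitting no inversion-decreasing move, but that would require more work with the move graph and is unnecessary here.)
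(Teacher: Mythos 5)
Your proof is correct, and the bookkeeping (an occurrence of $2$-$41$-$3$ or $3$-$41$-$2$ has the ``$4$'' and ``$1$'' adjacent with $\sigma_j>\sigma_{j+1}$, and swapping them removes exactly the inversion $(\sigma_j,\sigma_{j+1})$ while Proposition~\ref{prop:equivclass} keeps you in the fiber) is fine. Note, however, that this proposition is quoted from \cite[Proposition~4.5]{drec} and is not proved in the present paper, so there is no in-paper proof to compare against. What you have done is derive it from the two other black-box facts: Proposition~\ref{prop:equivclass} shows that every non--twisted-Baxter element of a fiber admits an inversion-decreasing move inside the fiber, so each minimal element of the finite nonempty fiber is twisted Baxter; Theorem~\ref{thm:rho} caps the number of twisted Baxter permutations in a fiber at one; hence the fiber has a unique minimal element, and in a finite poset a unique minimal element is automatically the least element. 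One thing worth flagging is a potential circularity at the source: in \cite{drec} the statement you are proving (their Proposition~4.5) logically precedes and supports Theorem~6.1/Corollary~8.7, which is quoted here as Theorem~\ref{thm:rho}, so unwinding the citations would make your argument circular. Within this paper, where both are taken as cited facts, the deduction is perfectly valid, and your closing remark correctly identifies that a citation-independent proof would instead need to argue directly with the move graph (or the lattice-congruence interval structure of the fibers) to show that each fiber has a unique sink under inversion-decreasing moves.
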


\begin{figure}
\begin{tikzpicture}[scale=.35]
\draw(0,0) rectangle (5,5);
\draw[fill=light-gray] (0,0) rectangle (2,4);
\draw[fill] (0,5) circle (.5ex);
\draw[fill] (1,4) circle (.5ex);
\draw[fill] (2,3) circle (.5ex);
\draw[fill] (3,2) circle (.5ex);
\draw[fill] (4,1) circle (.5ex);
\draw[fill] (5,0) circle (.5ex);
\node at (1.5, 3.5) {2};
\node at (2.5, 2.5) {3};
\node at (3.5, 1.5) {4};
\node at (4.5,.5) {5};
\node at (.5, 4.5) {1};

\draw(7,0) rectangle (12,5);
\draw[fill=mgray] (7,0) rectangle (9,4);
\draw[fill=light-gray] (9,0) rectangle (10,4);
\draw[fill] (7,5) circle (.5ex);
\draw[fill] (8,4) circle (.5ex);
\draw[fill] (9,3) circle (.5ex);
\draw[fill] (10,2) circle (.5ex);
\draw[fill] (11,1) circle (.5ex);
\draw[fill] (12,0) circle (.5ex);
\node at (8.5, 3.5) {2};
\node at (9.5, 2.5) {3};
\node at (10.5, 1.5) {4};
\node at (11.5,.5) {5};
\node at (7.5, 4.5) {1};

\draw(14,0) rectangle (19,5);
\draw[fill=mgray] (14,0) rectangle (16,4);
\draw[fill=mgray] (16,0) rectangle (17,4);
\draw[fill=light-gray] (14,4) rectangle (17, 5);
\draw[fill] (14,5) circle (.5ex);
\draw[fill] (15,4) circle (.5ex);
\draw[fill] (16,3) circle (.5ex);
\draw[fill] (17,2) circle (.5ex);
\draw[fill] (18,1) circle (.5ex);
\draw[fill] (19,0) circle (.5ex);
\node at (15.5, 3.5) {2};
\node at (16.5, 2.5) {3};
\node at (17.5, 1.5) {4};
\node at (18.5,.5) {5};
\node at (14.5, 4.5) {1};

\draw(21,0) rectangle (26,5);
\draw[fill=mgray] (21,0) rectangle (23,4);
\draw[fill=mgray] (23,0) rectangle (24,4);
\draw[fill=mgray] (21,4) rectangle (24, 5);
\draw[fill=light-gray](24, 0)rectangle (26, 1);
\draw[fill] (21,5) circle (.5ex);
\draw[fill] (22,4) circle (.5ex);
\draw[fill] (23,3) circle (.5ex);
\draw[fill] (24,2) circle (.5ex);
\draw[fill] (25,1) circle (.5ex);
\draw[fill] (26,0) circle (.5ex);
\node at (22.5, 3.5) {2};
\node at (23.5, 2.5) {3};
\node at (24.5, 1.5) {4};
\node at (25.5,.5) {5};
\node at (21.5, 4.5) {1};

\draw(28,0) rectangle (33,5);
\draw[fill=mgray] (28,0) rectangle (30,4);
\draw[fill=mgray] (30,0) rectangle (31,4);
\draw[fill=mgray] (28,4) rectangle (31, 5);
\draw[fill=mgray](31, 0)rectangle (33, 1);
\draw[fill=light-gray](31,1) rectangle (33, 5);
\draw[fill] (28,5) circle (.5ex);
\draw[fill] (29,4) circle (.5ex);
\draw[fill] (30,3) circle (.5ex);
\draw[fill] (31,2) circle (.5ex);
\draw[fill] (32,1) circle (.5ex);
\draw[fill] (33,0) circle (.5ex);
\node at (29.5, 3.5) {2};
\node at (30.5, 2.5) {3};
\node at (31.5, 1.5) {4};
\node at (32.5,.5) {5};
\node at (28.5, 4.5) {1};
\end{tikzpicture}
\caption{The map $\rho$ is applied to the permutation $23154$.}
\label{fig:rho}
\end{figure}
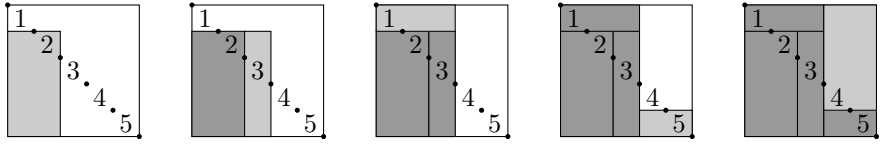

\section{The adjacency poset of a diagonal rectangulation}\label{adjposet}

In Section \ref{intro}, we provided a definition of the adjacency poset of a diagonal rectangulation~$D$.
At times, we will make use of an equivalent definition.

Given a diagonal rectangulation $D$ of size $n$ in $\mathbb{R}^2$ with bottom-left corner at $(0,0)$ and top-right corner at $(n,n)$, define the partial order $
Q$ on~$[n]$ as follows:  if there exist a point $p$ in the interior of rectangle $x$ and a point~$q$ in the interior of rectangle $y$ such that $q-p$ has positive coordinates declare $x\leq_Q y$, and then take the transitive closure of these relations.

\begin{proposition}
Given a diagonal rectangulation $D$ of size $n$, the adjacency poset~$P$ is the poset $Q$ defined above.
\end{proposition}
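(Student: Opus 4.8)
The plan is to show that the relations $\le_P$ and $\le_Q$ coincide by comparing their generating relations. Here $\le_Q$ is the transitive closure of the relations ``$x\le_Q y$ whenever some interior point of $x$ and some interior point of $y$ differ by a vector with positive coordinates'' (a set of relations that already includes every pair $(x,x)$, by taking $q=p+(\varepsilon,\varepsilon)$), and $\le_P$ is the reflexive and transitive closure of the relations ``$x<_P y$ whenever the relative interior of the bottom or left side of $y$ meets the relative interior of the top or right side of $x$''. So it suffices to prove: (i) every generating relation of $P$ holds in $Q$; and (ii) every generating relation of $Q$ holds in $P$. Given (i), since $Q$ is reflexive and transitive it contains the reflexive and transitive closure of the $P$-generators, i.e.\ $P\subseteq Q$; and given (ii), since $P$ is transitive it contains the transitive closure of the $Q$-generators, i.e.\ $Q\subseteq P$.

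For (i): suppose the relative interior of the bottom or left side of $y$ meets the relative interior of the top or right side of $x$. A horizontal side and a vertical side can meet only in a single point, and at such a point a small open quadrant would lie in the interior of both $x$ and $y$, which is impossible; hence the intersection occurs along a nondegenerate horizontal segment shared by the top of $x$ and the bottom of $y$, or along a nondegenerate vertical segment shared by the right of $x$ and the left of $y$. Pick a point $c$ in the relative interior of that shared segment. For all sufficiently small $\varepsilon>0$ the point $p=c-(\varepsilon,\varepsilon)$ lies in the interior of $x$ and $q=c+(\varepsilon,\varepsilon)$ lies in the interior of $y$, and $q-p=(2\varepsilon,2\varepsilon)$ has positive coordinates, so $x\le_Q y$.

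For (ii): suppose $x\le_Q y$ is generated by interior points $p\in x$ and $q\in y$ with $q-p$ having positive coordinates; if $x=y$ there is nothing to prove, so assume $x\neq y$. I would follow the straight segment $\gamma$ from $p$ to $q$. Because both coordinates of $q-p$ are positive, $\gamma$ has positive finite slope, so both coordinates increase strictly along $\gamma$; in particular $\gamma$ is never parallel to, nor contained in, a wall (walls being axis-parallel), and $\gamma$ stays in the interior of $S$. After replacing $p$ and $q$ by nearby interior points of $x$ and $y$ (keeping $q-p$ positive), I may also assume $\gamma$ avoids the finitely many vertices of the rectangulation. Then $\gamma$ crosses finitely many walls, each transversally in its relative interior, and between crossings lies inside a single rectangle; let $x=r_0,r_1,\dots,r_m=y$ be the rectangles met in order. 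At the point $c$ where $\gamma$ passes from $r_i$ to $r_{i+1}$, $c$ is not a rectangulation vertex, hence is a relative-interior point of a side of each of $r_i$ and $r_{i+1}$; monotonicity of $\gamma$ forces $r_{i+1}$ to lie above $r_i$ if the wall at $c$ is horizontal, and to the right of $r_i$ if it is vertical, so in either case $r_i<_P r_{i+1}$ is a generating relation of $P$. Chaining these with transitivity of $P$ yields $x\le_P y$, which completes (ii) and hence the proof that $P=Q$.

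The step I expect to be the main obstacle is (ii), and within it the perturbation argument: making it precise that $p$ and $q$ can be chosen so that the connecting segment meets each intervening wall transversally in the relative interior of the sides of the two rectangles separated by that wall, rather than through a vertex of the rectangulation where three rectangles meet. The observation recorded in Section~\ref{definitions}, that no four rectangles of a diagonal rectangulation share a vertex, is a convenient sanity check here, although the perturbation argument itself does not rely on the precise bound.
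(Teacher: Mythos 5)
Your argument is correct and follows essentially the same route as the paper's: both directions are proved by exhibiting a positively-sloped segment from an interior point of $x$ to one of $y$, perturbing to avoid rectangulation vertices, and reading off generating relations of $P$ from the wall crossings. Your treatment is slightly more careful than the paper's at a couple of points (ruling out a single-point intersection of a horizontal side with a vertical side, and noting the segment is never parallel to a wall), but these are minor elaborations of the same idea.
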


\begin{proof}
If $x\lessdot_P y$ then, by the definition of the adjacency poset, the interior of the bottom (or left side) of rectangle~$y$ intersects the interior of the top (or right side) of rectangle~$x$.
Thus there exist points $p\in \text{int(rectangle } x)$ and $q \in \text{int(rectangle } y)$ such that $q-p$ has positive coordinates.
Therefore, by the definition of $Q$, we have that $x\leq_Q y$.

If $x\lessdot_Qy$, then there exist points $p\in \text{int(rectangle } x)$ and $q \in \text{int(rectangle } y)$ such that $q-p$ has positive coordinates.
Consider the line segment joining $p$ to~$q$.
If this segment passes through the vertex of some rectangle, since $D$ contains only finitely many vertices, we may perturb $p$ or $q$, obtaining points $p'$ and $q'$, so that $p'$ and $q'$ are respectively in the interiors of rectangles $x$ and $y$, the segment joining $p'$ and $q'$ contains no vertices of $D$, and $q'-p'$ has positive coordinates.
Thus, we may assume that the segment joining $p$ and $q$ contains no vertices of $D$.
The segment passes through the interiors of the sequence of rectangles $x=z_0, z_1, \ldots,z_{m-1}, y=z_m$.
For all $i \in [m]$, the segment exits rectangle~$z_{i-1}$ and enters rectangle $z_{i}$ at a point in the interior of a side of both rectangles so $z_i<_P z_{i+1}$.
Therefore $x<_P y$.
\end{proof}

We note that the transitive closure in the definition of $Q$ is required (since we have chosen to refer to each diagonal rectangulation using the representative with edges intersecting the diagonal in equally spaced points).
Consider the rectangulation $\rho(312465)$ shown in Figure \ref{fig:natembed}.
Since the interior of the right side of rectangle 2 intersects the interior of the left side of rectangle 4, we have that $2<_P 4$.
Similarly, $4<_P 6$, so by transitivity $2<_P 6$.
However, there do not exist $p \in \text{int}(\text{rectangle 2})$ and $q \in \text{int}(\text{rectangle 6})$ such that $q-p$ has positive coordinates.

We give a description of the Hasse diagram of the adjacency poset of a diagonal rectangulation by describing its cover relations.

\begin{theorem}\label{thm:adjcovers}
Let $D$ be a diagonal rectangulation and $P$ the corresponding adjacency poset.  
Then $x\lessdot_P y$ if and only if rectangles $x$ and $y$ form one of the configurations shown in Figure \ref{fig:adjcovers}.
 \end{theorem}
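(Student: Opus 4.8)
The plan is to characterize cover relations $x \lessdot_P y$ geometrically, proceeding in two directions. First I would establish that each configuration in Figure~\ref{fig:adjcovers} does give a cover relation, and then conversely that every cover relation arises from such a configuration. For the forward direction, suppose rectangles $x$ and $y$ sit in one of the listed configurations. In each case the interior of the bottom or left side of $y$ meets the interior of the top or right side of $x$, so $x <_P y$ by definition of the adjacency poset. It then remains to rule out an intermediate $z$ with $x <_P z <_P y$; here I would use the equivalent definition via $Q$ together with the geometry of the configuration: if $x <_P z$ and $z <_P y$, then there is a monotone staircase of rectangles from $x$ through $z$ to $y$, and I would argue that the specific shape and adjacency of $x$ and $y$ in the configuration forces any such $z$ to actually be $x$ or $y$ — essentially because the shared wall between $x$ and $y$ is ``tight,'' leaving no room for an interior rectangle whose interior lies weakly northeast of $x$'s and weakly southwest of $y$'s.

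For the converse, suppose $x \lessdot_P y$. Since $x <_P y$, by the $Q$-description there is a point $p \in \mathrm{int}(\text{rectangle }x)$, a point $q \in \mathrm{int}(\text{rectangle }y)$ with $q - p$ having positive coordinates, and (as in the proof of the preceding proposition) a segment from $p$ to $q$ avoiding all vertices of $D$, passing through a chain of rectangles $x = z_0, z_1, \dots, z_m = y$ with consecutive ones adjacent along interiors of sides. Because $x \lessdot_P y$ admits no strict intermediate, this chain must have $m = 1$, i.e. $x$ and $y$ are themselves adjacent: the interior of the top or right side of $x$ meets the interior of the bottom or left side of $y$. Now I would do a case analysis on how rectangle $x$ and rectangle $y$ can be adjacent. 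The shared wall between them is either (a portion of) a horizontal wall with $x$ below and $y$ above, or a vertical wall with $x$ to the left and $y$ to the right. In each case I would examine how the wall terminates at each end — whether $x$ or $y$ extends past it — and use the diagonal-rectangulation constraint (every rectangle meets the anti-diagonal, so no four rectangles share a vertex, and the relative position of $x$ and $y$ along the diagonal) to cut the possibilities down to exactly the finite list in Figure~\ref{fig:adjcovers}.

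The main obstacle I anticipate is the converse case analysis: showing that adjacency of $x$ and $y$ plus the absence of an intermediate rectangle forces precisely the pictured configurations and no others. The subtlety is that two rectangles can be adjacent along a wall in many superficially different ways (the wall can be a proper sub-segment of the side of $x$, of the side of $y$, of both, or can coincide with a full side), and for each such local picture I must either produce an intermediate rectangle $z$ (contradicting that $x \lessdot_P y$) or recognize the picture as one in Figure~\ref{fig:adjcovers}. Producing the intermediate $z$ when the configuration is ``not tight'' is where the geometry of diagonal rectangulations — in particular the fact that walls are maximal unions of edges and that the construction via $\rho$ builds rectangles with left and bottom sides lying in the previously built region — will do the work: a non-tight adjacency leaves a corner near the shared wall occupied by some third rectangle whose interior is suitably northeast of $x$'s and southwest of $y$'s, giving the forbidden $z$.

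I would organize the write-up so that the forward direction is dispatched quickly (it is essentially inspection of finitely many pictures), and devote the bulk of the argument to the converse, handling the horizontal-wall case in detail and noting that the vertical-wall case is symmetric under reflection across the anti-diagonal, which is a symmetry of the class of diagonal rectangulations and reverses the roles of ``above'' and ``right of'' appropriately.
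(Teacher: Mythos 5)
Your forward direction matches the paper's. For the converse, you take a genuinely different route: you work directly from the $Q$-description, using the cover hypothesis to collapse the $Q$-chain to a single positive-coordinate pair $(p,q)$ and then the vertex-avoiding segment to force $x$ and $y$ to be adjacent, after which you propose a case analysis on adjacency types, killing the partial-overlap shapes by the diagonal constraint and the non-tight alignments by exhibiting an intermediate rectangle $z$. The paper instead uses the fact that the linear extensions of $P$ are exactly $\rho^{-1}(D)$, so $x \lessdot_P y$ produces a linear extension $\sigma$ with $x = \sigma_i$ and $y = \sigma_{i+1}$; the $\rho$-construction observation that the left and bottom edges of rectangle $\sigma_{i+1}$ lie in $T_i$ then restricts the pair, with no intermediate-$z$ hunting, to the eight configurations of Figure~\ref{fig:config}, and the diagonal constraint kills two of those. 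Your version is more elementary and self-contained geometrically, but it needs a longer case analysis (you must actually produce the intermediate $z$ in each non-tight case, and there are several); the paper's is shorter because the $\rho$-construction prunes those cases automatically. One small caveat: your sentence ``since $x <_P y$, by the $Q$-description there is a point $p$\ldots'' overstates what $x <_P y$ alone gives you, since the $Q$-relation is only a transitive closure of direct pairs; it is $x \lessdot_P y$ that collapses the chain to a single direct pair. You do use the cover relation a sentence later for the segment argument, so the underlying logic is sound, but the justification should cite $x \lessdot_P y$ from the start.
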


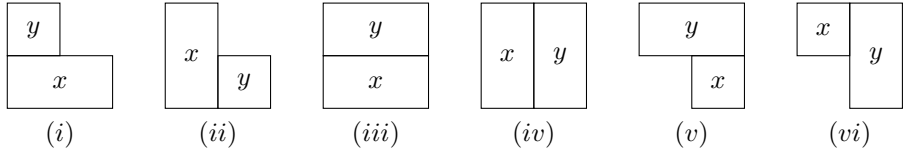
\begin{figure}
\begin{tikzpicture}[scale=.7]
\draw (0,0) rectangle (2,1);
\draw (0,1) rectangle (1,2);
\node at (1,.5)   {$x$} ;
\node at (.5,1.5) {$y$};
\node at (1, -.5) {$(i)$};

\draw (3,0) rectangle (4,2);
\draw (4,0) rectangle (5,1);
\node at (3.5,1)   {$x$} ;
\node at (4.5,.5) {$y$};
\node at (4, -.5) {$(ii)$};

\draw (6,0) rectangle (8,1);
\draw (6,1) rectangle (8,2);
\node at (7,.5)   {$x$} ;
\node at (7,1.5) {$y$};
\node at (7, -.5) {$(iii)$};

\draw (9,0) rectangle (10,2);
\draw (10,0) rectangle (11,2);
\node at (9.5,1)   {$x$} ;
\node at (10.5,1) {$y$};
\node at (10, -.5) {$(iv)$};

\draw (12,1) rectangle (14, 2);
\draw(13,0) rectangle (14,1);
\node at (13.5,.5)   {$x$} ;
\node at (13,1.5) {$y$};
\node at (13, -.5) {$(v)$};

\draw(15,1) rectangle (16,2);
\draw (16,0) rectangle (17,2);
\node at (15.5,1.5)   {$x$} ;
\node at (16.5,1) {$y$};
\node at (16, -.5) {$(vi)$};
\end{tikzpicture}
\caption{Configurations in a diagonal rectangulation that correspond to cover relations in the adjacency poset.}
\label{fig:adjcovers}
\end{figure}

\begin{proof}
Let $D$ be a diagonal rectangulation and $P$ the adjacency poset of $D$.
Assume that in $D$, rectangles $x$ and $y$ form one of the configurations shown in Figure \ref{fig:adjcovers}.
In each configuration, by definition, $x<_P y$.
Assume that rectangles $x$ and $y$ form configuration ($i$) and there exists some $z\in [n]$ such that $x<_P z<_P y$.
Since $z<_P y$ and $P$ is acyclic, $y\nless_P z$.  
Thus rectangle $z$ contains no interior points in the lined region of Figure \ref{fig:adji}.
Similarly, since $z \nless_P x$, rectangle $z$ contains no interior points in the dotted region of Figure \ref{fig:adji}. 
Therefore, any rectangle $z$ such that $x<_P z <_P y$ is completely contained in an unshaded region of Figure \ref{fig:adji}.
However, by the definition of $P$, no label of a rectangle contained in the lower-right unshaded region of Figure \ref{fig:adji} is covered by $y$. 
Similarly, in $P$ no label of a rectangle contained in the upper-left unshaded region of Figure \ref{fig:adji} covers $x$.
Additionally, no label of a rectangle contained in the lower-right unshaded region is covered by the label of a rectangle contained in the upper-left unshaded region.   
Thus there exists no~$z$ such that $x<_P z <_P y$.
Hence $x\lessdot_P y$.
For the remaining configurations of Figure \ref{fig:adjcovers}, similar considerations demonstrate that $x\lessdot_P y$.

\begin{figure}
\begin{tikzpicture}[scale=.6]
\draw[draw=none,pattern=north west lines, pattern color=gray] (0,1) rectangle (3,3);
\draw[draw=none,pattern=crosshatch dots, pattern color=gray] (-1,-1) rectangle (2,1);
\draw (-1,-1) rectangle (3,3);
\draw (0,0) rectangle (2,1);
\draw (0,1) rectangle (1,2);
\node at (1,.5)   {$x$} ;
\node at (.5,1.5) {$y$};

\end{tikzpicture} 
\caption{An illustration for the proof of Theorem \ref{thm:adjcovers}.}
\label{fig:adji}
\end{figure}
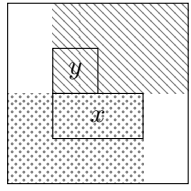

To prove the other direction of the theorem, assume that $x\lessdot_P y$.  
Since the set of linear extensions of $P$ is the fiber $\rho^{-1}(D)$ and $x\lessdot_P y$, there exists a linear extension $\sigma=\sigma_1 \cdots \sigma_n$ of $P$ such that $x=\sigma_i$ and $y=\sigma_{i+1}$.
Let $T_{j-1}$ denote the union of the left and bottom boundaries of the square $S$ and the partial diagonal rectangulation formed in the construction of $\rho(\sigma)$ after considering the first $j-1$ entries of $\sigma$.
The bottom and left edge of rectangle $\sigma_j$ is contained in $T_{j-1}$ for all $j\in [n]$.
Using the definition of the adjacency poset from Section \ref{intro}, since $x\lessdot_P y$, we have that rectangles $x$ and $y$ are adjacent with rectangle $x$ left of or below rectangle $y$.
Thus, combining these requirements, 
rectangles $x$ and $y$ form one of the configurations shown in Figure \ref{fig:config}.

\begin{figure}
\begin{tikzpicture}[scale=.6]

\draw (0,0) rectangle (1,2);
\draw (1,0) rectangle (2,1);
\node at (.5,1) {$x$};
\node at (1.5, .5) {$y$};
\node at (1,-.75) {$(e)$};

\draw (0,5) rectangle (1,6.5);
\draw (1,4.5) rectangle (2,6);
\node at (.5, 5.75) {$x$};
\node at (1.5, 5.25) {$y$};
\node at (1,3.75) {$(a)$};

\draw (4, 0) rectangle (5,2);
\draw (5,0) rectangle (6,2);
\node at (4.5,1) {$x$};
\node at (5.5, 1) {$y$};
\node at (5,-.75) {$(f)$};

\draw (4, 5.5) rectangle (5,6.5);
\draw (5,4.5) rectangle (6,6.5);
\node at (4.5, 6) {$x$};
\node at (5.5, 5.5) {$y$};
\node at (5,3.75) {$(b)$};

\draw (8,0) rectangle (10,1);
\draw (8,1) rectangle (9,2);
\node at (9, .5) {$x$};
\node at (8.5, 1.5) {$y$};
\node at (9,-.75) {$(g)$};

\draw (8.5, 4.5) rectangle (10, 5.5);
\draw (8,5.5) rectangle (9.5, 6.5);
\node at (9.25, 5) {$x$};
\node at (8.75, 6) {$y$};
\node at (9,3.75) {$(c)$};

\draw (12,0) rectangle (14, 1);
\draw (12,1) rectangle (14, 2);
\node at (13, .5) {$x$};
\node at (13, 1.5) {$y$};
\node at (13,-.75) {$(h)$};

\draw (13, 4.5) rectangle (14, 5.5);
\draw (12, 5.5) rectangle (14, 6.5);
\node at (13.5, 5) {$x$};
\node at (13, 6) {$y$};
\node at (13,3.75) {$(d)$};

\end{tikzpicture}
\caption{Relative locations of rectangles $x$ and $y$ used in the second half of the proof of 
Theorem \ref{thm:adjcovers}.}
\label{fig:config}
\end{figure}
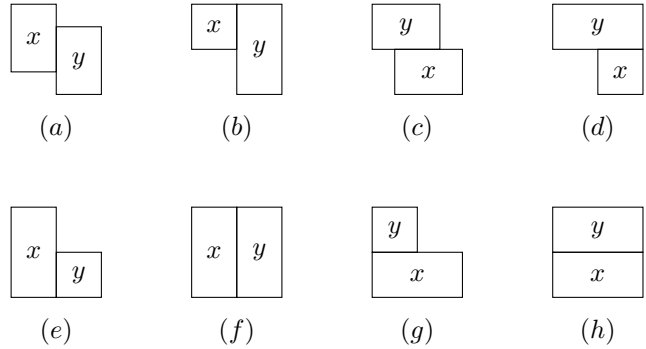  

To complete the proof of the theorem, we observe that configurations (a) and (c) of Figure~\ref{fig:config} cannot occur in any diagonal rectangulation.   
In a \emph{diagonal} rectangulation, the upper-left to bottom-right diagonal of $S$ passes through every rectangle of the rectangulation, but this is impossible in a rectangulation containing either of these configurations.  
Thus, if $x \lessdot_P y$, then rectangles $x$ and $y$ form one of the configurations shown in Figure \ref{fig:adjcovers}.
\end{proof}

\begin{figure}
\begin{tikzpicture}[scale=.4]
\draw (0,0) rectangle (6,6);
\node at (.5, 5.5) {1};
\node at (1.5, 4.5) {2};
\node at (2.5, 3.5) {3};
\node at (3.5, 2.5) {4};
\node at (4.5, 1.5) {5};
\node at (5.5,.5) {6};
\draw (0,4) --(3,4) -- (3,0);
\draw (1,4) -- (1,6);
\draw (3,2) -- (5,2)--(5,0);
\draw (3,4) -- (3, 6);
\draw (5, 2) --(5,6);

\draw[red] (.5,.5)--(.5,5)--(5.5, 5);
\draw[red] (.5, .5)--(4,.5)--(4, 5);

\draw[red,fill=red] (.5,.5) circle (.5ex);
\draw[red,fill=red] (.5,5) circle (.5ex);
\draw[red,fill=red] (2,5) circle (.5ex);
\draw[red,fill=red] (4,5) circle (.5ex);
\draw[red,fill=red] (4,.5) circle (.5ex);
\draw[red,fill=red] (5.5,5) circle (.5ex);

\node at (3,-1) {$\rho(312546)$};

\draw[red] (12,0.5)--(10,1.5)--(10,3.5)--(12,4.5)--(12,5.5);
\draw[red] (12,0.5)--(14,2.5)--(12,4.5);
\draw[red,fill=red] (12,0.5) circle (.5ex);
\draw[red,fill=red] (10,1.5) circle (.5ex);
\draw[red,fill=red] (10,3.5) circle (.5ex);
\draw[red,fill=red] (12,4.5) circle (.5ex);
\draw[red,fill=red] (12,5.5) circle (.5ex);
\draw[red,fill=red] (14,2.5) circle (.5ex);

\node at (12,0) {3};
\node at (9.5,1.5) {1};
\node at (9.5,3.5) {2};
\node at (12.5, 4.5) {4};
\node at (11.5, 5.5) {6};
\node at (14.5,2.5) {5};

\draw (0,-10) rectangle (6,-4);
\node at (.5, -4.5) {1};
\node at (1.5, -5.5) {2};
\node at (2.5, -6.5) {3};
\node at (3.5, -7.5) {4};
\node at (4.5, -8.5) {5};
\node at (5.5,-9.5) {6};
\draw (0,-6)--(3,-6)--(3, -10);
\draw (1,-6)--(1,-4);
\draw (3,-6)--(3,-4);
\draw (4, -10)--(4, -4);
\draw (4,-9)--(6,-9);

\node at (3, -11) {$\rho(312465)$};

\draw[red](.5,-9.5)--(.5,-5)--(2,-5)--(3.5,-5);
\draw[red](3.5, -9.5)--(5,-9.5)--(5,-8);

\draw[red,fill=red] (.5,-9.5) circle (.5ex);
\draw[red,fill=red] (.5,-5) circle (.5ex);
\draw[red,fill=red] (2,-5) circle (.5ex);
\draw[red,fill=red] (3.5,-5) circle (.5ex);
\draw[red,fill=red] (3.5,-9.5) circle (.5ex);
\draw[red,fill=red] (5,-9.5) circle (.5ex);
\draw[red,fill=red] (5,-8) circle (.5ex);

\draw[red] (12,-9.5)--(12,-4.5);

\draw[red,fill=red] (12,-9.5) circle (.5ex);
\draw[red,fill=red] (12,-8.5) circle (.5ex);
\draw[red,fill=red] (12,-7.5) circle (.5ex);
\draw[red,fill=red] (12,-6.5) circle (.5ex);
\draw[red,fill=red] (12,-5.5) circle (.5ex);
\draw[red,fill=red] (12,-4.5) circle (.5ex);

\node at (12.5,-9.5) {3};
\node at (12.5,-8.5) {1};
\node at (12.5,-7.5) {2};
\node at (12.5, -6.5) {4};
\node at (12.5, -5.5) {6};
\node at (12.5,-4.5) {5};
\end{tikzpicture}
\caption{The adjacency posets for the diagonal rectangulations $\rho(312546)$  and $\rho(312465)$.}
\label{fig:natembed}
\end{figure}
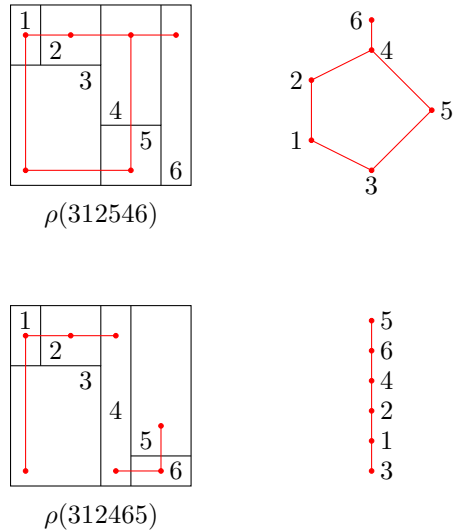

Figure \ref{fig:natembed} shows two diagonal rectangulations and their adjacency posets.
The posets are constructed using the correspondence between cover relations of $P$ and the rectangle configurations shown in Figure \ref{fig:adjcovers}.

\section{Characterization of Adjacency Posets}\label{bax}
To prove Theorem \ref{thm:adjposet}, we require the following definitions and results.
Given a planar embedding of a poset $P$, the embedding separates the plane into maximal connected components.
Recall that we call the closure of each bounded connected component a region of the embedding.

Given a planar embedding of a lattice $P$, for each $x\in P$, define $S(x)$ to be the union of the chains of $P$ containing $x$ and the horizontal line segments whose endpoints are contained in these chains.
In Figure \ref{fig:S(x)}, the gray region is $S(x)$.
We say that $x$ is \emph{left of} $y$ in the embedding if $y$ is not contained in $S(x)$ and a left-pointing horizontal ray with vertex at~$y$ passes through~$S(x)$.
We similarly define \emph{right of} and note that since $P$ is a lattice, $x$ is left of $y$ if and only if $y$ is right of~$x$.
Furthermore, if $x$ and $y$ are incomparable in $P$, then either $x$ is left of $y$ or $x$ is right of $y$.

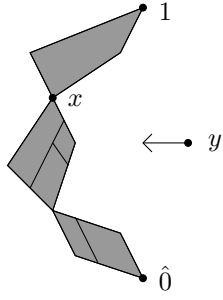
\begin{figure}
\begin{tikzpicture}[scale=.6]
\draw[fill=mgray](6.5, 1)--(5, 1.5)--(4.5, 2.5)--(6,2)--(6.5,1);
\draw[fill=mgray](4.5, 2.5)--(5, 4)--(4.5, 5)--(3.5, 3.5)--(4.5,2.5);
\draw[fill=mgray](4.5, 5)--(4, 6)--(6.5,7)--(6,6)--(4.5, 5);

\draw[fill] (4.5,5) circle (.5ex);
\draw[fill] (7.5,4) circle (.5ex);
\draw[fill] (6.5,1) circle (.5ex);
\draw[fill] (6.5,7) circle (.5ex);
\node at (5,4.95) {$x$};
\node at (8.1,4) {$y$};
\draw (7.5,4)--(6.5, 4);
\draw(6.7, 3.8)--(6.5, 4)--(6.7, 4.2);
\node at (7,1) {$\hat{0}$};
\node at (7,7) {$\hat{1}$};
\draw(6.5, 1)--(5, 1.5)--(4.5, 2.5)--(3.5, 3.5)--(4.5, 5);
\draw(6.5, 1)--(6,2)--(4.5, 2.5);
\draw(5, 2.35)--(5.5, 1.35);
\draw (4.5, 2.5)--(5, 4)--(4.5, 5);
\draw (4.75, 4.5)--(4, 3);
\draw (4.5, 5)--(4, 6)--(6.5,7);
\draw (4.5, 5)--(6, 6)--(6.5,7);
\draw(4.5, 4)--(4.85, 3.5);
\end{tikzpicture}\caption{The shaded region shows $S(x)$.  Since $y$ is not contained in $S(x)$ and the left-pointing horizontal ray with base point at $y$ intersects $S(x)$, we say that $x$ is left of~$y$.}\label{fig:S(x)}
\end{figure}

Let $\mathscr{L}=\{L_1,\ldots,L_l\}$ denote a collection of linear extensions of a poset $P$. 
We say that $\mathscr{L}$ is a \emph{realizer} of $P$ if the intersection of these total orders is $P$.
The \emph{dimension} of a poset $P$ is the size of the smallest realizer.

The following is a well-known result, which we will use to find a realizer of an adjacency poset. 
In the proposition and its proof, given $\sigma =\sigma_1\cdots \sigma_n \in S_n$, we declare $\sigma_i<_{\sigma} \sigma_j$ if and only if $i<j$.
We will routinely pass between a permutation and its associated total order.

\begin{proposition}\label{prop:scapt}
Let $[\sigma,\psi]$ be an interval in the right weak order on $S_n$.
The elements of $[\sigma,\psi]$ are the linear extensions of the intersection of these total orders. 
\end{proposition}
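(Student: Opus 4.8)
The plan is to prove both inclusions. For the easier direction, suppose $L$ is a linear extension of the intersection $\bigcap_{i}$ of the total orders in $[\sigma,\psi]$; actually, let me restate more carefully. Write $R = \bigcap_{\tau \in [\sigma,\psi]} \tau$ for the common refinement, viewed as a partial order on $[n]$. We must show that the set of linear extensions of $R$ equals the interval $[\sigma,\psi]$. One inclusion is immediate: every $\tau \in [\sigma,\psi]$ is, by definition of $R$ as an intersection, a linear extension of $R$. The content is the reverse inclusion: every linear extension $L$ of $R$ satisfies $\sigma \le L \le \psi$ in the right weak order.

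The key observation is a clean description of $R$. I claim that $R$ consists precisely of those pairs $(a,b)$ with $a$ before $b$ in \emph{every} $\tau \in [\sigma,\psi]$, and that this coincides with: $a$ before $b$ in $\sigma$ \emph{and} $a$ before $b$ in $\psi$. Equivalently, in inversion-set language, $\mathrm{inv}(\sigma) \subseteq \mathrm{inv}(\psi)$, and a linear order $L$ lies in $[\sigma,\psi]$ iff $\mathrm{inv}(\sigma) \subseteq \mathrm{inv}(L) \subseteq \mathrm{inv}(\psi)$. So $R$ "records" exactly the non-inversions of $\psi$ (pairs $a<b$ numerically appearing in order) that are already non-inversions of $\sigma$ (hence forced), together with the inversions of $\sigma$ (pairs forced to be inverted). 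The first step is therefore to verify that the transitive relation "$a$ before $b$ in both $\sigma$ and $\psi$" is already transitive on its own and hence equals $R$; this uses that the right weak order interval $[\sigma,\psi]$ is the set of orders whose inversion set is sandwiched, a standard fact about the weak order on $S_n$ that follows from the lattice property (the weak order is a lattice and intervals are determined by inversion sets), which I would either cite or prove in a sentence.

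Given that description, the reverse inclusion becomes: if $L$ is any linear extension of $R$, then $\mathrm{inv}(\sigma) \subseteq \mathrm{inv}(L) \subseteq \mathrm{inv}(\psi)$. For the first containment, take $(b,a) \in \mathrm{inv}(\sigma)$ with $a<b$ numerically and $b$ before $a$ in $\sigma$; then the relation "$b$ before $a$" holds in $\sigma$, and I must argue it holds in \emph{every} $\tau \in [\sigma,\psi]$ so that it is a relation of $R$, forcing $b$ before $a$ in $L$. This is exactly the statement that $\mathrm{inv}(\sigma) \subseteq \mathrm{inv}(\tau)$ for all $\tau \ge \sigma$, which is the definition of the right weak order. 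For the second containment, suppose toward a contradiction that $(b,a) \in \mathrm{inv}(L)$ but $(b,a) \notin \mathrm{inv}(\psi)$, i.e. $a<b$ numerically, $b$ before $a$ in $L$, but $a$ before $b$ in $\psi$. Since $(b,a)\notin\mathrm{inv}(\psi)$ and $(b,a)\notin\mathrm{inv}(\sigma)$ would make "$a$ before $b$" a relation of $R$ contradicting $L$'s being a linear extension, we'd need $(b,a)\in\mathrm{inv}(\sigma)$; but then $(b,a)\in\mathrm{inv}(\sigma)\subseteq\mathrm{inv}(\psi)$, contradiction. Hence no such pair exists and $\mathrm{inv}(L)\subseteq\mathrm{inv}(\psi)$.

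The main obstacle is the lemma underlying the whole argument: that the right weak order interval $[\sigma,\psi]$ is characterized by $\mathrm{inv}(\tau) \in [\mathrm{inv}(\sigma),\mathrm{inv}(\psi)]$ in the containment order on inversion sets (equivalently, that $\mathrm{inv}$ is an isomorphism from the weak order onto its image as a sublattice of the Boolean lattice of "transitively closed" subsets). The nontrivial half is that if $\mathrm{inv}(\sigma)\subseteq\mathrm{inv}(\tau)\subseteq\mathrm{inv}(\psi)$ then one can actually get from $\sigma$ up to $\tau$ and from $\tau$ up to $\psi$ by adjacent transpositions each adding one inversion — i.e. that every such sandwiched inversion set is realized and the covering relations interpolate. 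This is classical (it is the statement that the weak order is the restriction of the containment order on inversion sets, or can be derived from the fact that a set $I$ of pairs is $\mathrm{inv}(\tau)$ for some $\tau$ iff both $I$ and its complement are transitively closed, plus the standard exchange argument). I would either invoke it as well known with a reference (e.g. Björner–Brenti) or include a short self-contained proof by induction on $|\mathrm{inv}(\psi)| - |\mathrm{inv}(\sigma)|$, choosing at each step an adjacent pair of $\sigma$ that is a non-inversion of $\sigma$ but an inversion of $\psi$ and transposing it — such a pair exists because if every adjacent non-inversion of $\sigma$ were also a non-inversion of $\psi$, transitivity would force $\mathrm{inv}(\sigma) = \mathrm{inv}(\psi)$.
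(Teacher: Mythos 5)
Your approach is essentially the paper's: both arguments work entirely in the language of inversion sets, showing that $L$ extends $\sigma\cap\psi$ exactly when $\mathrm{inv}(\sigma)\subseteq\mathrm{inv}(L)\subseteq\mathrm{inv}(\psi)$, and the two nontrivial containment checks you give are the same ones that appear in the paper. The ``main obstacle'' you flag at the end dissolves here, because the paper \emph{defines} the right weak order by $\sigma\leq\psi$ iff $\mathrm{inv}(\sigma)\subseteq\mathrm{inv}(\psi)$; so $L\in[\sigma,\psi]$ iff its inversion set is sandwiched is true by definition, and no lattice-theoretic or adjacent-transposition argument is needed. One small wrinkle: you take $R$ to be the intersection over the whole interval and call the forward inclusion immediate, whereas the paper works directly with $\sigma\cap\psi$ (the intended reading of ``these total orders''); under that reading the forward inclusion is not literally immediate but follows from a one-line case split on whether $a<b$ or $a>b$ numerically, which is exactly the paper's second paragraph.
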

 
 \begin{proof}
Let $\sigma=\sigma_1 \cdots \sigma_n$ and $\psi=\psi_1 \cdots \psi_n$.
Denote the intersection of the total orders $\sigma$ and $\psi$ by $\sigma \cap \psi$.

 Let $u$ be a linear extension of $\sigma\cap \psi$.
 If $(\sigma_i,\sigma_j)\in \text{inv}(\sigma)$ then, since $\sigma\leq\psi$ in the right weak order, $(\sigma_i,\sigma_j)\in \text{inv}(\psi)$.
 Thus $(\sigma_i,\sigma_j)\in \text{inv}(u)$.
 If $(u_i, u_j)\in \text{inv}(u)$, then either $u_j\nless_\sigma u_i$ or $u_j \nless_\psi u_i$.
 Since $\sigma$ and $\psi$ are total orders, we have that $(u_i,u_j)\in \text{inv} (\sigma)$ or $(u_i,u_j)\in \text{inv} (\psi)$.
 In the right weak order $\sigma\leq\psi$, so $(u_i,u_j)\in \text{inv} (\psi)$.
 We conclude that $u\in [\sigma, \psi]$.

 Let $u=u_1 \cdots u_n \in [\sigma,\psi]$ and assume that $u$ is not a linear extension of $\sigma \cap \psi$.
 Thus there exist $i,j \in [n]$ with $i<j$ such 
 that $u_j<_\sigma u_i$ and $u_j<_\psi u_i$.
 If $u_j> u_i$ in numerical order, then $(u_j, u_i)\in \text{inv}(\sigma)$ and $(u_j, u_i)\notin \text{inv}(u)$, contradicting 
 the assumption that $\sigma \leq u$ in the right weak order.
 If $u_j<u_i$ in numerical order, then $(u_i, u_j) \in \text{inv}(u)$ and $(u_i, u_j)\notin \text{inv}(\psi)$ ,
  contradicting the assumption that $u \leq \psi$ in the right weak order.
 Therefore, if $u \in [\sigma,\psi]$, then $u$ is a linear extension of~$\sigma \cap \psi$.
 \end{proof}
 
Since each congruence class of a lattice congruence on the right weak order is an interval \cite[Section~2]{Lat} and since each fiber of $\rho$ is  such a congruence class {\mbox{\cite[Proposition 6.3]{drec}},} each fiber of~$\rho$ is an interval of the right weak order.
Let $D$ be a diagonal rectangulation and let~$L_1$ and~$L_2$ be respectively the minimum and maximum elements in the right weak order on~$S_n$ such that $\rho(L_1)=\rho(L_2)=D$.
By Proposition \ref{prop:scapt}, and since any poset is determined by its set of linear extensions, $\mathscr{L}=\{L_1, L_2\}$ is a realizer of the adjacency poset of $D$.

Given a linear extension $L=\sigma_1\cdots \sigma_n$ of a poset $P$ on $[n]$, let $\pi_L: [n] \to [n]$ be defined by $\pi_L(x)=i$ if and only if $x=\sigma_i$.
The inverse of the permutation $\sigma_1 \cdots \sigma_n$ is $\pi_L(1) \cdots \pi_L(n)$.
If $P$ has realizer $\mathscr{L}=\{L_1,L_2\}$, then the \emph{projection of~$\mathscr{L}$} denoted by $\pi_\mathscr{L}(P)$ is a map from~$[n]$ to $\mathbb{R}^2$ given by $\pi_\mathscr{L}(x)=(\pi_{L_1}(x),\pi_{L_2}(x))$.
This is an embedding of $P$ into the componentwise order on $\mathbb{R}^2$. 
To view this embedding of~$P$ as a Hasse diagram for $P$, we take ``up" to be the direction of the vector $\langle1,1\rangle$.

\begin{theorem}[{\cite[p. 69]{Trotter}}]\label{thm:planar}
If $P$ is a lattice with realizer $\mathscr{L}=\{L_1, L_2\}$, then the embedding of~$P$ into the componentwise order on $\mathbb{R}^2$ given by $\pi_{\mathscr{L}}(P)$ is a planar embedding of~$P$.
\end{theorem}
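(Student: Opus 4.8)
The plan is to establish two things: that $\pi_{\mathscr{L}}(P)$ really is a Hasse diagram for $P$, and that no two of its edges meet except possibly at a common vertex. Throughout, write $\pi$ for $\pi_{\mathscr{L}}$. For the first point, since $\mathscr{L}=\{L_1,L_2\}$ is a realizer, $x<_P y$ holds exactly when $x<_{L_1}y$ and $x<_{L_2}y$, i.e.\ exactly when $\pi_{L_1}(x)<\pi_{L_1}(y)$ and $\pi_{L_2}(x)<\pi_{L_2}(y)$; thus $\pi$ carries $<_P$ precisely to the strict componentwise order on $\mathbb{R}^2$, so for a cover $x\lessdot_P y$ the point $\pi(y)$ lies strictly northeast of $\pi(x)$ and hence above it in the $\langle 1,1\rangle$ direction. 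Edges are drawn as straight segments by construction, so the embedding is a Hasse diagram. I would also record two elementary remarks used repeatedly below: because each $\pi_{L_i}$ is a bijection $[n]\to[n]$, any two distinct vertices differ in \emph{both} coordinates, so every edge is a segment of strictly positive slope running from southwest to northeast; and if $\pi(w)$ lay in the relative interior of an edge $[\pi(x),\pi(y)]$ with $x\lessdot_P y$, strict monotonicity of the segment in both coordinates would give $x<_P w<_P y$, contradicting the cover — so no vertex sits in the interior of a non-incident edge.

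For planarity I would argue by contradiction: suppose the segments of two covers $x\lessdot_P y$ and $u\lessdot_P v$ meet at a point $p$ that is not a shared vertex. The cases where the two edges share a vertex or overlap along a subsegment are killed immediately by injectivity of $\pi$ together with the ``nothing strictly between'' property of covers (an overlap would put one endpoint of one edge strictly inside the other, contradicting a cover relation), so I may assume the four endpoints are distinct and $p$ lies in the relative interior of both segments. Writing $p$ as a convex combination of the endpoints of each segment and using that each segment runs strictly southwest-to-northeast, I get $\pi(x)<p<\pi(y)$ and $\pi(u)<p<\pi(v)$ coordinatewise, hence $\pi(x)<\pi(v)$ and $\pi(u)<\pi(y)$ componentwise, i.e.\ $x<_P v$ and $u<_P y$. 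Feeding these into the cover hypotheses: if $u\le_P x$ then $u<_P x<_P v$ exhibits $x$ strictly between $u$ and $v$, contradicting $u\lessdot_P v$; if $x\le_P u$ then $x<_P u<_P y$ contradicts $x\lessdot_P y$; so $x\parallel u$. The symmetric argument, using $x<_P v$ against $x\lessdot_P y$ and $u<_P y$ against $u\lessdot_P v$, gives $y\parallel v$.

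The crux, where the lattice hypothesis finally enters, is the join-squeeze step. Since $P$ is a lattice, $w:=x\vee u$ exists; from $x,u\le_P y$ and $x,u\le_P v$ we get $w\le_P y$ and $w\le_P v$, and since $x\parallel u$ the join $w$ is strictly above both $x$ and $u$. Then $x<_P w\le_P y$ with $x\lessdot_P y$ forces $w=y$, while $u<_P w\le_P v$ with $u\lessdot_P v$ forces $w=v$; hence $y=w=v$, contradicting $y\parallel v$. This contradiction shows no two edges cross, completing the proof. I expect the only real friction to be bookkeeping: stating the ``northeast crossing'' observation cleanly and making sure every appeal to a cover relation genuinely uses that the intermediate element is distinct from both endpoints — which is exactly where injectivity of $\pi$ and the hypothesis $\{x,y\}\neq\{u,v\}$ are needed. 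The lattice step itself is short once the two incomparabilities are in hand.
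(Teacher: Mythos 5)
Your proof is correct. The paper does not supply its own argument for this statement — it is quoted from Trotter's \emph{Combinatorics and Partially Ordered Sets: Dimension Theory} — so there is no in-paper proof to compare against, and yours is a genuine, self-contained proof along the standard lines. Every step checks out: the realizer hypothesis makes $\pi_{\mathscr L}$ an order-isomorphism onto the strict componentwise order on its image, which gives the Hasse-diagram property; the injectivity of each $\pi_{L_i}$ guarantees that every edge is a strictly-increasing segment and that no vertex can sit in the interior of a non-incident edge; a transversal crossing of two cover-edges $[\pi(x),\pi(y)]$ and $[\pi(u),\pi(v)]$ at an interior point then forces $x<_P v$, $u<_P y$, and (by the cover hypotheses) $x\parallel u$ and $y\parallel v$; and the lattice hypothesis is invoked exactly where it is needed, squeezing $x\vee u$ into both covers to force $y=v$, a contradiction. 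You also correctly dispose of the degenerate cases (shared endpoint, overlapping collinear edges) before assuming four distinct endpoints. One small thing worth making explicit if you write this up formally: the conclusion $y\parallel v$ in the penultimate step uses not only the inequalities $x<_P v$ and $u<_P y$ but also that $y\neq v$ and $x\neq u$, which is precisely the ``all four endpoints distinct'' reduction — you flagged this correctly at the end, but it is the one place a reader might want the dependency spelled out. The lattice hypothesis is genuinely necessary (the four-element ``bowtie'' poset has a two-element realizer whose projection crosses), so your identification of the join-squeeze step as the crux is the right diagnosis.
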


The following proposition is \cite[p 32, Exercise 7(a)]{Birkhoff}.  
Since every Baxter poset is finite, bounded, and has a planar embedding, this proposition implies that every Baxter poset is a lattice.

\begin{proposition}\label{prop:lattice}
A finite planar poset $P$ is a lattice if and only if $P$ is bounded. 
\end{proposition}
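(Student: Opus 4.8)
The plan is to prove the two implications separately; planarity is needed only for one. If $P$ is a finite lattice, it has a greatest element $\bigvee_{x\in P}x$ and a least element $\bigwedge_{x\in P}x$, hence is bounded. For the converse, suppose $P$ is finite and bounded and fix a planar embedding. It suffices to show that every two elements $a,b\in P$ have a join: applying this to the opposite poset $P^{\mathrm{op}}$ — which is again finite and bounded, and whose planar embedding is obtained by reflecting the given one across a horizontal line — then shows that every two elements have a meet, so $P$ is a lattice.

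Suppose, for contradiction, that $a$ and $b$ have no join. Then $a$ and $b$ are incomparable, and the set of common upper bounds of $\{a,b\}$ (which contains $\hat 1$) has at least two distinct minimal elements $c,d$; these are incomparable and satisfy $a<c$, $a<d$, $b<c$, $b<d$. Fix saturated chains $C_{ac},C_{ad},C_{bc},C_{bd}$ lying in the intervals $[a,c],[a,d],[b,c],[b,d]$ respectively (each running from the first to the second named element), let $a'$ be the topmost common element of $C_{ac}$ and $C_{ad}$, and $b'$ the topmost common element of $C_{bc}$ and $C_{bd}$. The key fact used throughout is that any common upper bound of $\{a,b\}$ that lies below $c$ must equal $c$ (and likewise for $d$); this, with the incomparability of $\{a,b\}$, of $\{c,d\}$, and — as one deduces — of $\{a',b'\}$, forces the four sub-chains running from $a'$ up to $c$ (inside $C_{ac}$), from $c$ down to $b'$ (inside $C_{bc}$), from $b'$ up to $d$ (inside $C_{bd}$), and from $d$ down to $a'$ (inside $C_{ad}$) to be internally disjoint, the four elements $a',c,b',d$ to be distinct, and hence their union to be a simple closed curve $\Gamma$ in the embedding on which $a',c,b',d$ appear in this cyclic order. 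The same considerations let one attach to $\Gamma$: saturated chains from an element $z$ with $z<a'$ and $z<b'$ (a suitable common lower bound obtained from $\hat 0$) up to $a'$ and up to $b'$, and saturated chains from an element $w$ with $w>c$ and $w>d$ (obtained from $\hat 1$) down to $c$ and down to $d$; these four chains can be chosen pairwise disjoint except that the first two share only $z$ and the last two share only $w$, each meets $\Gamma$ only at its endpoint lying on $\Gamma$, and $z,w$ are distinct from one another and from $a',c,b',d$.

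Now the contradiction. Since $a'<c$, $a'<d$, $b'<c$, $b'<d$, every point of $\Gamma$ (its vertices and the straight-line edges between them) has height at least $\min\{\mathrm{ht}(a'),\mathrm{ht}(b')\}$ and at most $\max\{\mathrm{ht}(c),\mathrm{ht}(d)\}$; as $\mathrm{ht}(z)$ is strictly below the former and $\mathrm{ht}(w)$ strictly above the latter, the downward vertical ray from $z$ and the upward vertical ray from $w$ both miss $\Gamma$, so $z$ and $w$ lie in the same, unbounded, component of $\mathbb{R}^2\setminus\Gamma$. Passing to $S^2=\mathbb{R}^2\cup\{\infty\}$, that component is an open disk $D$ with $\partial D=\Gamma$, carrying $a',c,b',d$ in this cyclic order on its boundary, and the whole attached configuration lies in $\overline D$. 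The arc $A$ obtained by joining the chain from $z$ to $a'$ and the chain from $z$ to $b'$ is a chord of $D$ meeting $\partial D$ only at $a'$ and $b'$; since $a'$ and $b'$ are non-adjacent in the cyclic order, $A$ splits $D$ into two sub-disks, one with $c$ (but not $d$) on its boundary and the other with $d$ (but not $c$). But $w$ lies in one of these sub-disks, and it is joined to both $c$ and $d$ by arcs that avoid $A$ and meet $\Gamma$ only at their endpoints; the arc joining $w$ to whichever of $c,d$ lies on the boundary of the \emph{other} sub-disk must then cross $A$, which is impossible. Hence $a\vee b$ exists.

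The main obstacle is the combinatorial bookkeeping in the second step: one must check that all the required saturated chains can be chosen internally disjoint where claimed, and rule out each degenerate coincidence (such as $a'=c$, $a'=b'$, $z=w$, or $z\in\Gamma$), every one of which is eliminated by invoking the minimality of $c$ and $d$ or one of the incomparabilities. It is worth emphasizing that mere planarity of the Hasse diagram as an abstract graph would not be enough here: the graph assembled above is $K_{3,3}$ with one edge deleted, which is planar, so the argument genuinely uses that a planar embedding of a poset is an \emph{upward} drawing — that is exactly what forces $z$ and $w$ onto the same side of $\Gamma$.
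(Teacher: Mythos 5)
The paper does not actually prove this proposition: it is quoted as Exercise~7(a) on p.~32 of Birkhoff's \emph{Lattice Theory}, so there is no in-paper argument to compare against, and your proposal supplies a proof from scratch. Your argument is correct. After reducing to the existence of joins (meets then follow either from $P^{\mathrm{op}}$ as you say, or more directly from $x\wedge y=\bigvee\{z: z\le x,\ z\le y\}$, which uses $\hat 0$), you assume $a,b$ have two incomparable minimal upper bounds $c,d$, extract $a'$ and $b'$ from the branch points of chosen saturated chains, and assemble the closed curve $\Gamma$ through $a',c,b',d$. I checked the disjointness bookkeeping that you defer: every needed internal disjointness and distinctness does come out of the minimality of $c,d$ among upper bounds of $\{a,b\}$ and the incomparabilities of $\{a,b\}$, $\{c,d\}$, $\{a',b'\}$ (for instance, a vertex common to $[z,a']$ and $[w,c]$ would force $a'=c$, a vertex common to $[a',c]_{ac}$ and $[d,a']_{ad}$ above $a'$ would contradict the maximality defining $a'$, and so on). The height argument placing $z$ and $w$ in the unbounded face uses that the paper's Hasse diagram edges are straight segments, so each maximal chain is $y$-monotone; that is exactly the hypothesis needed, and your closing remark that abstract planarity of the Hasse graph would not suffice (the assembled graph being $K_{3,3}$ minus an edge) correctly identifies why the upward-drawing property is essential. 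The Jordan-curve finish — $A$ from $a'$ to $b'$ through $z$ separates the outer disk, while $w$ must reach both $c$ and $d$ by arcs vertex-disjoint from $A$ — is sound.
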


The following lemma is \cite[Lemma 2.1]{Bjorner}: 
\begin{lemma}\label{lemma:join}
Let $P$ be a bounded poset such that every chain of $P$ is of finite length.
If, for any $x$ and $y$ in $P$ such that $x$ and $y$ both cover some element $z$, the join $x \vee y$ exists, then~$P$ is a lattice.
\end{lemma}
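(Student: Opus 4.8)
The plan is to deduce that $P$ is a lattice from the single fact that every pair of its elements has a join, so the real work will be producing binary joins; I read the covering hypothesis in the usual way, namely that $x\vee y$ is guaranteed whenever some $z$ satisfies $z\lessdot_P x$ and $z\lessdot_P y$. Write $\hat 0$ for the bottom element of the bounded poset $P$, and note that ``every chain of $P$ is finite'' will be used only through its two consequences: $P$ has no infinite ascending chain and no infinite descending chain. First I would reduce meets to joins: once all binary joins exist, all finite nonempty joins exist by a routine induction, and then for $x,y\in P$ the set $D$ of common lower bounds of $x,y$ is nonempty (it contains $\hat 0$), the family $J$ of joins of finite nonempty subsets of $D$ is a directed subset of $D$ (a finite join of elements $\le x$ and $\le y$ is again $\le x$ and $\le y$, and $J$ is closed under pairwise joins), and a directed poset with no infinite ascending chain has a greatest element $m$ (otherwise one builds an infinite strictly ascending chain). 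Since $d=\bigvee\{d\}\in J$ for every $d\in D$, this $m$ is the largest common lower bound, so $x\wedge y=m$; hence $P$ has all binary meets and joins and is a lattice, and it remains only to produce joins.

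To produce joins I would argue by induction on $z\in P$, assuming the claim for every element strictly above $z$ (legitimate since $P$ has no infinite ascending chain), where the claim $\psi(z)$ is: \emph{for all $x,y$ with $x\ge z$ and $y\ge z$, the join $x\vee y$ exists}. Since $\hat 0$ is the least element, $\psi(\hat 0)$ is precisely the assertion that all binary joins exist.

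For the inductive step, fix $z$, assume $\psi(z')$ for all $z'>z$, and take $x,y\ge z$. If $x=z$ then $y\ge x$ and $x\vee y=y$; the case $y=z$ is symmetric; so we may assume $x>z$ and $y>z$. Using that $P$ has no infinite descending chain, choose a minimal element $a$ of $\{w:z<w\le x\}$ and a minimal element $b$ of $\{w:z<w\le y\}$; then $z\lessdot_P a\le x$ and $z\lessdot_P b\le y$. If $a=b$, then $a$ is a common lower bound of $x$ and $y$ with $a>z$, so $\psi(a)$ gives $x\vee y$ (as $x,y\ge a$). If $a\ne b$, then $z\lessdot_P a$ and $z\lessdot_P b$, so the lemma's hypothesis supplies $c:=a\vee b$; since $a>z$ the inductive hypothesis $\psi(a)$ holds and, applied to $c$ and $x$ (both $\ge a$), gives $d:=c\vee x$; since $b>z$, $\psi(b)$ holds and, applied to $d$ and $y$ (both $\ge b$, as $d\ge c\ge b$), gives $e:=d\vee y$. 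Then $e=x\vee y$: we have $e\ge x$ and $e\ge y$, and any common upper bound $f$ of $\{x,y\}$ satisfies $f\ge a$ and $f\ge b$, hence $f\ge c$, hence $f\ge d$, hence $f\ge e$. This establishes $\psi(z)$, completing the induction, and with it the lemma.

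The step I expect to be the main obstacle is the case $a\ne b$ of the inductive step: $a$ and $b$ must be chosen as genuine covers of $z$ so that the lemma's hypothesis applies verbatim, and then each appeal to $\psi(a)$ or $\psi(b)$ must be to a pair both of whose members lie above the relevant cover, before the three successive joins are collapsed onto $x\vee y$ through the chain of inequalities above. I would also be careful that no rank function is available (``all chains finite'' gives no uniform bound on chain lengths), so the argument must rest only on the absence of infinite monotone chains; this is what legitimizes the downward induction, the choice of a minimal element of $\{w:z<w\le x\}$, and the existence of a greatest element of the directed family $J$. No planarity or rectangulation structure enters; the lemma is a purely order-theoretic fact.
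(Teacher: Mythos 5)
Your proof is correct, and since the paper does not reprove this lemma (it simply cites Lemma~2.1 of Bj\"orner, Edelman, and Ziegler), there is no in-paper argument to compare against; your argument is essentially the standard one for this result. The two halves are both sound: the reduction of meets to joins via the directed family $J$ of finite joins of common lower bounds uses $\hat 0$ and the absence of infinite ascending chains exactly where needed, and the Noetherian induction on $z$ correctly extracts covers $a,b$ of $z$ below $x$ and $y$ (using the absence of infinite descending chains), invokes the covering hypothesis for $a\vee b$, and then collapses the successive joins $c=a\vee b$, $d=c\vee x$, $e=d\vee y$ onto $x\vee y$ by the chain of upper-bound comparisons you give.
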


We now have the necessary tools to prove our main result.

\begin{proof}[(Proof of Theorem \ref{thm:adjposet})]
Let $D$ be a diagonal rectangulation of size $n$ and $P$ the associated adjacency poset.
We first demonstrate that $P$ satisfies the five conditions of Definition \ref{def:baxposet}.
The rectangle $x$ of $D$ whose lower-left corner coincides with the lower-left corner of $S$ contains interior points below and left of interior points of all other rectangles of $D$.
Thus for every $y \in [n]-\{x\}$, we have that $x<_P y$ . 
Similarly, the label of the rectangle of $D$ whose upper-right corner coincides with the upper-right corner of $S$ is greater, in $P$, than every other element of $P$.
Therefore, $P$ is a bounded poset.

Observe that any rectangle $x$ of $D$ is the left rectangle of at most one of the configurations shown in Figure \ref{fig:adjcovers} and the bottom rectangle of at most one of the configurations shown in Figure \ref{fig:adjcovers}.
Thus, $x$ is covered by at most two elements of~$P$.
Similarly, $x$ covers at most two elements of $P$. 

To show that $P$ meets Condition 3 of Definition \ref{def:baxposet}, for a contradiction assume that~$P$ contains a $2$-$14$-$3$, a $3$-$14$-$2$, a $2$-$41$-$3$ or a $3$-$41$-$2$ chain.
This implies that some linear extension~$\sigma$ of $P$ contains this pattern with the $``4"$ and $``1"$ adjacent.
By Proposition \ref{prop:equivclass}, transposing the $``4"$ and $``1"$ in this linear extension results in a permutation $\sigma'$ such that $\rho(\sigma)=\rho(\sigma')$.
Since the fiber $\rho^{-1}(D)$ is the set of linear extensions of $P$, the permutation $\sigma'$ is also a linear extension of $P$.
However, this contradicts the assumption that the ``4" and the ``1" are related in $P$.   

Since the labeling of the rectangles of $D$ comes from the map $\rho$ from permutations to diagonal rectangulations, to demonstrate that $P$ meets Condition 4 of Definition~\ref{def:baxposet}, we rely on observations about this map.  
Consider an interval $[x,y]$ of $P$ such that $(x,y)$ is disconnected.
There exist~$x_r\neq x_a$ such that $x\lessdot_P x_r$ and $x\lessdot_P x_a$.  
By Theorem \ref{thm:adjcovers}, since no four rectangles of a diagonal rectangulation share a vertex, rectangles $x, x_a,$ and $x_r$ form one of the configurations shown in Figure \ref{fig:intervalpf}. 
In Diagram (i), the left side of rectangle $x_a$ is missing to indicate that the lower-left vertex of rectangle $x_a$ coincides with or is left of the upper-left vertex of rectangle $x$.  
The bottom of rectangle $x_r$ is missing in Diagram (ii)  to similarly indicate that the lower-left vertex of rectangle $x_r$ coincides with or is below the lower-right vertex of rectangle $x$.

\begin{figure}
\begin{center}
\begin{tikzpicture}[scale=.6]
\draw (10,0) rectangle (12,2);
\draw(12,0) rectangle (13,1);
\draw (10,2) rectangle (11,3);
\node at (11,1) {$x$};
\node at (10.5,2.5) {$x_a$};
\node at (12.5, .5) {$x_r$};
\node at (11.5,-1) {$(iii)$};

\draw (5,0) rectangle (7,2);
\draw (7,2)--(8,2)--(8,0);
\draw (5,2) rectangle (6,3);
\node at (6,1) {$x$};
\node at (5.5,2.5) {$x_a$};
\node at (7.5, 1) {$x_r$};
\node at (6.5,-1) {$(ii)$};

\draw(0,0) rectangle (2,2);
\draw (2,2)--(2,3)--(0,3);
\draw (2,0) rectangle (3,1);
\node at (1,1) {$x$};
\node at (1, 2.5) {$x_a$};
\node at (2.5, .5) {$x_r$};
\node at (1.5,-1) {$(i)$};

\end{tikzpicture}
\end{center}
\caption{Given that $x \lessdot_P x_a$ and $x\lessdot_P x_r$ with $x_a\neq x_r$, in diagonal rectangulation $D$ rectangles $x, x_a$ and $x_r$ form one of the three configurations shown.}
\label{fig:intervalpf}
\end{figure}
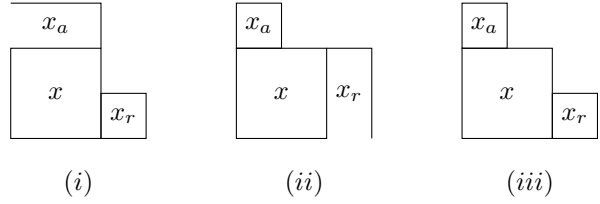

First assume that rectangles $x, x_a,$ and $x_r$ are in the configuration shown in Diagram (i) of Figure \ref{fig:intervalpf} and let $W$ be the vertical wall on the right side of rectangle~$x$.
The lower-right vertex of rectangle $x$ and the lower-left vertex of rectangle~$x_r$ coincide, so rectangle~$x$ is the lowermost rectangle on the left side of $W$.
 By the definition of $\rho$, rectangle $x+1$ is the uppermost rectangle adjacent to the right side of $W$ and the lower-left corner of rectangle $x+1$ is below the upper-right corner of rectangle $x$.
Since the interiors of the right edge of rectangle~$x_a$ and the left edge of rectangle $x+1$ intersect, we have that $x_a<_P x+1$.
Since the upper-right corner of rectangle $x+1$ is strictly right of $W$ and above rectangle~$x_r$, we have that $x_r<_P x+1$.
We wish to show that $x+1=y$, i.e.,~there does not exist $z<_P x+1$ such that $x_a<_P z$ and $x_r<_P z$.
We will prove a stronger statement: $x_a \vee x_r$ exists and $x_a \vee x_r=x+1$.
Since $x+1$ is an upper bound for $x_a$ and~$x_r$, it suffices to demonstrate that any other upper bound $z$ satisfies $x+1\leq_P z$.
To obtain a contradiction, assume that $x+1 \nleq_P z$ for some upper bound $z$. 
We use an argument similar to the argument used in the proof of Theorem~\ref{thm:adjcovers}.
Since $x<_Pz$, we have that $z\nleq_Px$.
Thus rectangle~$z$ contains no interior points that are both left of the vertical line containing $W$ and below the horizontal line containing the top of rectangle~$x$.
Since $x+1 \nleq_P z$, rectangle $z$ contains no interior points that are both right of the vertical line containing $W$ and above the horizontal line containing the bottom of rectangle~$x+1$.
Thus $z$ is contained in either the region left of the vertical line containing~$W$ and above the horizontal line containing the top of rectangle $x$ or the region right of the vertical line containing $W$ and below the horizontal line containing the bottom of rectangle $x+1$.
Note that these regions are disjoint, that rectangle $x_a$ is contained in the first region, and that rectangle $x_r$ is contained in the second region.
 In $P$, the label of a rectangle contained in the first region cannot cover the label of a rectangle contained in the second region and vice versa.
Thus $x_a\nless_P z$ or $x_r\nless_P z$, a contradiction.
Therefore $x_a\vee x_r=x+1$.

 When rectangles $x, x_a$ and $x_r$ form the configuration shown in Diagram (ii) of Figure~\ref{fig:intervalpf}, by considering the horizontal wall $W$ above rectangle~$x$ and the rightmost rectangle above~$W$, rectangle~$x-1$, we similarly show that $y=x-1$ and that $x_a\vee x_r=x-1$. 
 In the case illustrated in Diagram~(iii) of Figure~\ref{fig:intervalpf}, we first observe that since~$D$ is a diagonal rectangulation, the wall above or on the right side of rectangle~$x$ extends beyond the upper-right corner of rectangle $x$.
 In either case, using the previous arguments, we show that $y=x+1$ or $y=x-1$ and~$y=x_a\vee x_r$.

To demonstrate that $P$ meets Condition 5 of Definition \ref{def:baxposet}, note that by Condition~1 of the definition, and since we verified that $y=x_a \vee x_r$ in each case of the proof of Condition 4, Lemma \ref{lemma:join} implies that $P$ is a lattice.
Let $L_1$ and~$L_2$ be respectively the minimum and maximum elements in the right weak order on~$S_n$ such that $\rho(L_1)=\rho(L_2)=D$.
By Proposition \ref{prop:scapt}, $\mathscr{L}=\{L_1, L_2\}$ is a realizer of~$P$. 
By Theorem \ref{thm:planar}, the Hasse diagram obtained from $\pi_{\mathscr{L}}(P)$ is a planar embedding of~$P$.
Let $[x,y]$ be an interval of $P$ such that~$(x,y)$ is disconnected.
Let $x\lessdot_P x_l$ and $x \lessdot_P x_r$ where $x_l$ is left of $x_r$ in the planar Hasse diagram obtained from~$\pi_{\mathscr{L}}(P)$.
 Let $\pi_\mathscr{L}(x_l)=(a,b)$ and  $\pi_\mathscr{L}(x_r)=(c,d)$.
Since~$x_l$ and $x_r$ are incomparable with $x_l$ left of $x_r$ in the planar Hasse diagram, we have that $a<c$ and $b>d$ in numerical order.
This implies that $x_l$ precedes $x_r$ in $L_1$ and $x_l$ follows $x_r$ in~$L_2$.
Since $L_1\leq L_2$ in the right weak order, $(x_l, x_r)\in \text{inv}(L_2)$.
Thus  $x_l<x_r$ in numerical order.

Rectangles $x, x_l$, and $x_r$ form one of the configurations shown in Figure \ref{fig:intervalpf} (with~$x_l$ replacing $x_a$).
In every diagram of Figure \ref{fig:intervalpf}, since each rectangle~$x_i$ such that ${x_l\leq_P x_i <_P y}$ is contained in the region above the horizontal line containing the top of rectangle~$x$ and left of the vertical line containing the left side of rectangle~$y$, rectangle~$x_i$ intersects the diagonal of $S$ in that region.
This implies that $x_i<x$ in numerical order.
Additionally, for each~$x_j$ such that $x_r\leq_P x_j <_P y$, since rectangle~$x_j$ intersects the diagonal of $D$ in the region right of the vertical line containing the right side of rectangle $x$ and below the horizontal line containing the bottom of rectangle~$y$, we have that $x<x_j$ in numerical order.
Thus one connected component of $(x,y)$ contains elements numerically smaller than $x$ and $y$ while the other connected component contains elements numerically larger than~$x$ and $y$.
Since $x_l<x_r$ in numerical order with~$x_l$ contained in the left component of $(x,y)$ and $x_r$ contained in the right component, given $w,z\in (x,y)$ such that $w$ is left of $z$ in this planar embedding of $P$, we have that $w<x<z$ in numerical order.

We have shown that the adjacency poset $P$ satisfies each of the conditions in Definition \ref{def:baxposet}, so $P$ is a Baxter poset.

Now let $P$ be a Baxter poset.
To demonstrate that~$P$ is an adjacency poset, we first show that the set of linear extensions of $P$ is a union of fibers of $\rho$.
In what follows, we assume that $P$ is embedded as described in Condition 5 of Definition~\ref{def:baxposet}.  
Let $\sigma=\sigma_1 \cdots \sigma_n$ be a linear extension of~$P$ and suppose $\psi=\sigma_1\cdots \sigma_{j-1} \sigma_{j+1}\sigma_j \sigma_{j+2}\cdots \sigma_n$ such that $\rho(\sigma)=\rho(\psi)$.
We will show that $\psi$ is also a linear extension of~$P$.
Since $\rho(\sigma)=\rho(\psi)$ and~$\sigma\lessdot \psi$ or $\psi \lessdot \sigma$ in the right weak order, by Proposition \ref{prop:equivclass}, the permutations $\sigma$ and $\psi$ are related by a single ($2$-$41$-$3$ $\leftrightarrow$ $2$-$14$-$3$) or ($3$-$41$-$2$ $\leftrightarrow$ $3$-$14$-$2$) move.
Let $a \sigma_j \sigma_{j+1} b$ be an occurrence of one of these four patterns in $\sigma$ such that swapping $\sigma_j$ and $\sigma_{j+1}$ is a move.
Since~$\sigma$ is a linear extension of $P$, the permutation $\psi$ is also a linear extension of~$P$ if and only if~$\sigma_j$ and $\sigma_{j+1}$ are incomparable in~$P$.
To proceed via contradiction, assume that $\sigma_j$ and $\sigma_{j+1}$ are comparable in~$P$.
Because~$\sigma_j$ precedes $\sigma_{j+1}$ in $\sigma$ and $\sigma$ is a linear extension of $P$, we have that $\sigma_{j+1}\nless_P \sigma_j$.
Thus $\sigma_j<_P \sigma_{j+1}$.
This implies that $\sigma_j \lessdot_P \sigma_{j+1}$ since any $\sigma_k$ such that $\sigma_j<_P \sigma_k <_P \sigma_{j+1}$ would be between $\sigma_j$ and~$\sigma_{j+1}$ in every linear extension of $P$ (and in particular in $\sigma$).  
By Condition~3 of Definition \ref{def:baxposet}, at least one of $\{a,b\}$ is incomparable with at least one of $\{\sigma_j, \sigma_{j+1}\}$.
We assume that $a$ is incomparable with $\sigma_j$ or $\sigma_{j+1}$ and note that if~$b$ is instead incomparable with~$\sigma_j$ or $\sigma_{j+1}$, then the argument is analogous.
Since~$a$ precedes~$\sigma_j$ in $\sigma$, our assumption implies that either $a<_P \sigma_{j+1}$ and $a$ and $\sigma_j$ are incomparable, or $a$ is incomparable with both~$\sigma_j$ and $\sigma_{j+1}$.
In either case, $a$ and $\sigma_j$ are incomparable.

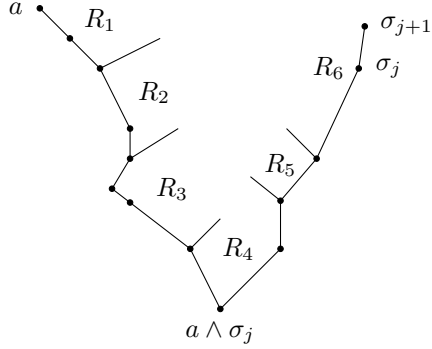
\begin{figure}
\begin{tikzpicture}[scale=.8]
\draw (0,0)--(1,-1)--(1.5, -2)--(1.5, -2.5)--(1.2,-3)--(2.5, -4)--(3, -5);

\node at (-.4,0) {$a$};
\node at (3,-5.4) {$a \wedge \sigma_j$};
\node at (5.8, -1) {$\sigma_j$};
\node at (6.1, -.3) {$\sigma_{j+1}$};
\draw (3,-5)--(4,-4)--(4,-3.2)--(4.6, -2.5)--(5.3, -1)--(5.4, -.3);

\draw[fill] (0,0) circle (.3ex);
\draw[fill] (1,-1) circle (.3ex);
\draw[fill] (1.5,-2) circle (.3ex);
\draw[fill] (1.5,-2.5) circle (.3ex);
\draw[fill] (1.2, -3) circle (.3ex);
\draw[fill] (2.5, -4) circle (.3ex);
\draw[fill] (3,-5) circle (.3ex);
\draw[fill] (4,-4) circle (.3ex);
\draw[fill] (4,-3.2) circle (.3ex);
\draw[fill] (4.6, -2.5) circle (.3ex);
\draw[fill] (5.3, -1) circle (.3ex);
\draw[fill] (.5, -.5) circle (.3ex);
\draw[fill] (1.5, -3.23) circle (.3ex);
\draw[fill] (5.4, -.3) circle (.3ex);

\draw(1,-1)--(2, -.5); 
\draw (1.5, -2.5)--(2.3, -2);
\draw(2.5, -4)--(3,-3.5);
\draw(4,-3.2)--(3.5, -2.8);
\draw (4.6, -2.5)--(4.1, -2);

\node at (1, -.2) {$R_1$};
\node at (1.9, -1.4) {$R_2$};
\node at (2.2, -3) {$R_3$};
\node at (3.3, -4) {$R_4$};
\node at (4, -2.6) {$R_5$};
\node at (4.8, -1) {$R_6$};
\end{tikzpicture}
\caption{An illustration for the proof of Theorem \ref{thm:adjposet}.} 
\label{fig:regions}
\end{figure}

By Proposition~\ref{prop:lattice},~$P$ is a lattice so we may consider $S(a)$ and $S(\sigma_j)$.
First assume that $a$ is left of $\sigma_j$ 
and consider the maximal chain $C_1$ of $P$ from $a$ to the minimal element of $P$, denoted~$\hat{0}$, that follows the right boundary of $S(a)$.
Let~$C_2$ denote the maximal chain of $P$ from~$\sigma_j$ to~$\hat{0}$ that follows the left boundary of $S(\sigma_j)$.  
Note that~$C_1$ and~$C_2$ intersect at $a \wedge \sigma_j$ and let~$C_1'$ and~$C_2'$ denote the chains from~$a$ and~$\sigma_j$ to $a \wedge \sigma_j$ obtained by truncating $C_1$ and~$C_2$ respectively.
Figure~\ref{fig:regions} shows an example of the chains~$C_1'$ and~$C_2'$.
Each edge of~$C_1'$ and $C_2'$ is the edge of a region of~$P$ that lies right of $C_1'$ and left of $C_2'$.  
Starting at~$a$, traveling down $C_1'$ to $a \wedge \sigma_j$, label the sequence of regions right of and adjacent to~$C_1'$ with $R_1,\ldots,R_l$.
Starting at $a\wedge \sigma_j$, and traveling up $C_2'$ to $\sigma_j$, continue by labeling the sequence of regions left of and adjacent to $C_2'$  with $R_l,R_{l+1},\ldots,R_m$.  
In Figure~\ref{fig:regions}, $l=4$ and $m=6$.
For each $i\in [m-1]$, by Condition 2 of Definition \ref{def:baxposet}, the region~$R_i$ shares an edge with the region $R_{i+1}.$ 
(Otherwise $C_1$ is not the right boundary of~$S(a)$ or $C_2$ is not the left boundary of $S(\sigma_j)$.)
Since $P$ is a lattice, for $i \in [m]$, each region~$R_i$ has a minimal element, denoted $r_i$ contained in the boundary of~$R_i$.
Each~$r_i$ is a vertex of $C_1' \cup C_2'$.
(If the edge of some region is contained in $C_1' \cup C_2'$ and that region's minimal element is not on $C_1' \cup C_2'$, then again either~$C_1$ is not the right boundary of $S(a)$ or $C_2$ is not the left boundary of~$S(\sigma_j)$.)
For each $i\in [l]$, the minimal element $r_i$ is contained in the left side of the region $R_{i+1}$.
Thus, by Condition 5 of Definition \ref{def:baxposet}, we have that $a< r_1< \cdots < r_l=a \wedge \sigma_j$ in numerical order.
For each $i\in \{l+1,\ldots,m\}$, the minimal element $r_i$ is contained in the right side of region~$R_{i-1}$.
Thus $a \wedge \sigma_j=r_l<r_{l+1} <\cdots <r_{m}<\sigma_j$ in numerical order.
Combining these strings of inequalities, we conclude that $a<\sigma_j$ in numerical order.

In a similar way, construct a sequence of regions $S_1,\ldots,S_p$ using the section of the right boundary of $S(a)$ from $a$ to $a \vee \sigma_{j+1}$ and the section of the left boundary of $S(\sigma_{j+1})$ from~$\sigma_{j+1}$ to $a\vee \sigma_{j+1}$.  
If $a<_P\sigma_{j+1}$, then $a \vee \sigma_{j+1}=\sigma_{j+1}$.
Whether $a<_P \sigma_{j+1}$ or $a$ and $\sigma_{j+1}$ are incomparable in $P$, using the sequence of maximal elements of these regions together with Condition 5 of Definition \ref{def:baxposet}, we obtain a chain of inequalities and conclude that $a<\sigma_{j+1}$ in numerical order.
However, combining the conclusions that  $a<\sigma_j$ and $a<\sigma_{j+1}$ contradicts to the assumption that $a\sigma_j\sigma_{j+1}b$ is an occurrence of a $2$-$41$-$3$, a $2$-$14$-$3$, a $3$-$41$-$2$ or a $3$-$14$-$2$ pattern.

If $\sigma_j$ is left of $a$ in $P$, then to construct sequence of regions $R_1,\ldots,R_m$, let~$C_1$ be the right boundary of $S(\sigma_j)$ and $C_2$ be the left boundary of $S(a)$.
To construct the sequence of regions $S_1,\ldots,S_p$, use the right boundary of $S(\sigma_{j+1})$ and the left boundary of $S(a)$.
Using these sequences and the corresponding chains of inequalities, we conclude that in numerical order $\sigma_j<a$ and $\sigma_{j+1}<a$.
This conclusion again contradicts the assumption that $a\sigma_j\sigma_{j+1}b$ is an occurrence of a $2$-$41$-$3$, a $2$-$14$-$3$, a $3$-$41$-$2$ or a $3$-$14$-$2$ pattern.
In both cases, we see that $\sigma_j$ and~$\sigma_{j+1}$ are incomparable in $P$.
Therefore the set of linear extensions of $P$ is a union of fibers of $\rho$.

Any two linear extensions of a poset are related by a sequence of adjacent transpositions.
Consider two linear extensions $\sigma$ and $\psi$ of $P$ that differ by an adjacent transposition.
To complete the proof that $P$ is an adjacency poset, we will show that $\rho(\sigma)=\rho(\psi)$.
Specifically, we demonstrate that $\sigma$ and $\psi$ are related by a ${(2\text{-}41\text{-}3} \leftrightarrow$ $2$-$14$-$3$) or ($3$-$41$-$2$ $\leftrightarrow$ $3$-$14$-$2$) move.
Suppose that $\sigma=\sigma_1\cdots \sigma_j \sigma_{j+1} \cdots \sigma_n$ and $\psi=\sigma_1\cdots \sigma_{j-1}\sigma_{j+1} \sigma_{j}\sigma_{j+2} \cdots \sigma_n$.
Since $\sigma_j$ precedes $\sigma_{j+1}$ in $\sigma$ but $\sigma_{j+1}$ precedes $\sigma_j$ in $\psi$, we have that $\sigma_j$ and $\sigma_{j+1}$ are incomparable in $P$.
This implies that $\sigma_j\wedge \sigma_{j+1}\notin \{\sigma_j,\sigma_{j+1}\}$ and $\sigma_j\vee \sigma_{j+1}\notin \{\sigma_j,\sigma_{j+1}\}$.
Without loss of generality, up to swapping $\sigma$ and $\psi$, we can assume that $\sigma_j$ is left of $\sigma_{j+1}$ in $P$.
Consider sequences of regions $R_1,\ldots,R_m$ and $S_1,\ldots,S_p$, defined as in the previous paragraph, replacing~$a$ with $\sigma_{j}$.
Using these sequences of adjacent regions and the resulting inequalities, we obtain $\sigma_j<\sigma_j\wedge \sigma_{j+1}<\sigma_{j+1}$ and $\sigma_j<\sigma_j \vee \sigma_{j+1} < \sigma_{j+1}$ in numerical order.
By definition, $\sigma_j\wedge \sigma_{j+1}<_P \sigma_j$ and $\sigma_j\wedge \sigma_{j+1}<_P \sigma_{j+1}$, so $\sigma_j\wedge \sigma_{j+1}$ precedes~$\sigma_j$ and~$\sigma_{j+1}$ in $\sigma$ and $\psi$.
Similarly,~$\sigma_j$ and $\sigma_{j+1}$ precede $\sigma_j\vee \sigma_{j+1}$ in $\sigma$ and $\psi$. 
Thus the sequence $(\sigma_j\wedge \sigma_{j+1}) \sigma_j \sigma_{j+1} (\sigma_j\vee \sigma_{j+1})$ is a $2$-$41$-$3,$ a $2$-$14$-$3,$ a $3$-$41$-$2,$ or a $3$-$14$-$2$ pattern in $\sigma$.
\end{proof}

\section{Twisted Baxter and Baxter Permutations from Baxter Posets}\label{perms}
Let $P$ be a poset.
We say that a subset $I$ of the elements of $P$ is an \emph{order ideal of $P$} if and only if for every $a\in I$, if $b<_P a$, then $b \in I$.
We say that an ordering $a_1 \cdots a_i$ of a subset of the elements of $P$ is a \emph{partial linear extension} of~$P$ if $\{a_1,\ldots,a_j\}$ is an order ideal of $P$ for all $j\in [i]$. 
Given a poset $P$ on $[n]$, the permutation~$\sigma$ is a linear extension of $P$ if and only if $\sigma$ satisfies the definition of a partial linear extension.  
Given a partial linear extension $\sigma_1\cdots \sigma_{i-1}$ of $P$, we define $A_i\subseteq[n]$ by $u\in A_i$ if and only if $\sigma_1 \cdots \sigma_{i-1}u$ is a partial linear extension of $P$.
We label this set $A_i$ because it forms an antichain (a set of pairwise incomparable elements) of $P$.

\begin{theorem}
Given a Baxter poset $P$, the unique twisted Baxter permutation $\sigma=\sigma_1\cdots \sigma_n$ that is a linear extension of $P$ is constructed by choosing $\sigma_i=\min(A_i)$ for each $i\in [n]$.
\end{theorem}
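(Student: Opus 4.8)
The plan is to show that the permutation $\sigma=\sigma_1\cdots\sigma_n$ obtained by setting $\sigma_i=\min(A_i)$ is exactly the minimum, in the right weak order, among the linear extensions of $P$. By Theorem~\ref{thm:adjposet} the linear extensions of $P$ are precisely the fiber $\rho^{-1}(D)$ of $\rho$ over the diagonal rectangulation $D$ with adjacency poset $P$, and (as recorded just before Proposition~\ref{prop:scapt}) this fiber is an interval $[L_1,L_2]$ of the right weak order. By Theorem~\ref{thm:rho} this interval contains a unique twisted Baxter permutation, and by Proposition~\ref{prop:minmax} that permutation is its minimum $L_1$. So it suffices to prove $\sigma=L_1$.

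First I would record a fact about the greedy construction that uses nothing beyond the poset structure: \emph{in $\sigma$, every descent is a cover relation of $P$.} The permutation $\sigma$ is a well-defined linear extension of $P$ since each $A_i$ contains any $P$-minimal element of $[n]\setminus\{\sigma_1,\dots,\sigma_{i-1}\}$. Now suppose $\sigma_j>\sigma_{j+1}$. Since $\sigma_j=\min(A_j)$, the element $\sigma_{j+1}$ is not in $A_j$, so $\{\sigma_1,\dots,\sigma_{j-1},\sigma_{j+1}\}$ is not an order ideal; as $\{\sigma_1,\dots,\sigma_{j-1}\}$ is an order ideal, there is some $z<_P\sigma_{j+1}$ with $z\notin\{\sigma_1,\dots,\sigma_{j-1}\}$. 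But $\sigma_{j+1}\in A_{j+1}$, so $\{\sigma_1,\dots,\sigma_j,\sigma_{j+1}\}$ is an order ideal and therefore contains every element below $\sigma_{j+1}$; hence $z\in\{\sigma_1,\dots,\sigma_j\}$, which forces $z=\sigma_j$ and $\sigma_j<_P\sigma_{j+1}$. If some $w$ satisfied $\sigma_j<_Pw<_P\sigma_{j+1}$, then $w$ would lie in the order ideal $\{\sigma_1,\dots,\sigma_j\}$ yet would have to appear after $\sigma_j$ in $\sigma$, which is impossible; hence $\sigma_j\lessdot_P\sigma_{j+1}$.

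Next I would show $\sigma=L_1$. Suppose not; since $[L_1,L_2]$ is a finite poset with minimum $L_1$, there is an element $\mu\in[L_1,L_2]$ with $\mu\lessdot\sigma$ within $[L_1,L_2]$, and because $[L_1,L_2]$ is convex in the right weak order this is also a cover relation in the full right weak order (an intermediate element $\mu<\nu<\sigma$ would lie in the interval). Thus $\mu$ is obtained from $\sigma$ by transposing an adjacent descent $\sigma_j>\sigma_{j+1}$. Since $\sigma$ and $\mu$ are both linear extensions of $P$ and order $\sigma_j$ and $\sigma_{j+1}$ oppositely, these two elements are incomparable in $P$ — contradicting the previous paragraph, which gives $\sigma_j<_P\sigma_{j+1}$. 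Hence $\sigma=L_1$, so by Proposition~\ref{prop:minmax} and Theorem~\ref{thm:rho}, $\sigma$ is the unique twisted Baxter permutation that is a linear extension of $P$.

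The only substantive ingredients are external: that the linear extensions of a Baxter poset form an interval of the right weak order (via Theorem~\ref{thm:adjposet} and the lattice-congruence description of the fibers of $\rho$) and the identification of the minimum of such a fiber with its twisted Baxter permutation (Proposition~\ref{prop:minmax}). The argument proper is short, and the one point requiring care is the passage from "$\mu$ is a lower cover of $\sigma$ inside the fiber" to "$\mu$ is a lower cover of $\sigma$ in the full weak order," which is where convexity of the fiber is essential; without it, $\mu$ and $\sigma$ need not differ by a single adjacent transposition and the contradiction would not go through.
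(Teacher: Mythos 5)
Your argument is correct and follows the paper's route exactly: reduce to showing the greedy permutation coincides with the right-weak-order minimum $L_1$ of the fiber $\rho^{-1}(D)$, which Proposition~\ref{prop:minmax} identifies as the twisted Baxter permutation. The paper states the final equality $\sigma_i=\min(A_i)$ in a single ``therefore,'' whereas you justify it carefully (via the observation that descents of the greedy extension are cover relations of $P$, together with convexity of the fiber in the weak order); this makes explicit a step the paper treats as immediate.
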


Note that $\min(A_i)$ denotes the smallest, in numerical order, element of $A_i$.
If a Baxter poset $P$ is given a natural embedding, then this selection is equivalent to choosing the leftmost (in the embedding) element of $A_i$ for each $i \in [n]$.

\begin{proof}
Let $P$ be a Baxter poset and $D$ the associated diagonal rectangulation.
By Theorem~\ref{thm:adjposet}, the total order $\sigma$ is a linear extension of $P$ if and only if $\rho(\sigma)=D$.
Since $\rho$ restricts to a bijection between diagonal rectangulations and twisted Baxter permutations (Theorem \ref{thm:rho}), there is a unique linear extension $\sigma=\sigma_1\cdots \sigma_n$ of $P$ that is a twisted Baxter permutation.
To construct~$\sigma$ one entry at a time, we must describe a method for choosing $\sigma_i$ from $A_i$.
By Proposition~\ref{prop:minmax}, the permutation~$\sigma$ is the minimal element of the right weak order such that $\rho(\sigma)=D$.
That is, $\sigma$ is the linear extension of $P$ that contains the fewest inversions. 
Therefore, $\sigma_i=\min(A_i)$ for all~$i\in [n]$.
\end{proof}

The following results will be used in the proof of Theorem \ref{thm:baxter}.
The next lemma is equivalent to Corollary 4.2 in \cite{drec} which states that $\sigma$ is a Baxter permutation if and only if $\sigma^{-1}$ is a Baxter permutation.

\begin{lemma}\label{lemma:4-1to41} 
The permutation $\sigma$ is a Baxter permutation if and only if $\sigma$ contains no subsequence $\sigma_i\sigma_j \sigma_k \sigma_l$ such that $|\sigma_l-\sigma_i|=1$ and the subsequence is an occurrence of the pattern 2-4-1-3 or the pattern 3-1-4-2.
\end{lemma}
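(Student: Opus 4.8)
The plan is to derive the lemma from the fact, recalled just above, that $\sigma$ is a Baxter permutation if and only if $\sigma^{-1}$ is (Corollary 4.2 of \cite{drec}). Since a Baxter permutation is by definition one that avoids the patterns $2$-$41$-$3$ and $3$-$14$-$2$, this reduces the claim to the following assertion: $\sigma^{-1}$ contains an occurrence of $2$-$41$-$3$ or $3$-$14$-$2$ if and only if $\sigma$ contains an occurrence of $2$-$4$-$1$-$3$ or $3$-$1$-$4$-$2$ whose first and last entries differ by~$1$. The mechanism is the position--value duality of the inverse: a subsequence of $\sigma^{-1}$ that occupies positions $v_1<v_2<v_3<v_4$ and takes values $p_1,p_2,p_3,p_4$ records exactly the same data as the subsequence of $\sigma$ that occupies positions $p_1,\dots,p_4$ and takes values $v_1,\dots,v_4$, with the roles of ``position'' and ``value'' interchanged; under this interchange the condition ``two of the chosen positions of $\sigma^{-1}$ are adjacent'' turns into ``two of the chosen values of $\sigma$ are consecutive integers''.

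To make this precise on a single occurrence, I would begin with a $2$-$41$-$3$ occurrence of $\sigma^{-1}$, that is, positions $a<b<b+1<c$ of $\sigma^{-1}$ with $(\sigma^{-1})_{b+1}<(\sigma^{-1})_a<(\sigma^{-1})_c<(\sigma^{-1})_b$. Reading $a,b,b+1,c$ instead as four values of $\sigma$, and recalling that $(\sigma^{-1})_x$ is the position of the value $x$ in $\sigma$, this says precisely that in $\sigma$ the entries of values $a<b<b+1<c$ occur, in order of position, as $b+1,\,a,\,c,\,b$; their relative ranks are therefore $3,1,4,2$, so they form a $3$-$1$-$4$-$2$ occurrence, and its first and last entries are the consecutive values $b+1$ and $b$. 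The identical computation converts a $3$-$14$-$2$ occurrence of $\sigma^{-1}$ into a $2$-$4$-$1$-$3$ occurrence of $\sigma$ whose first and last entries are again consecutive integers. Since the position--value interchange is an involution and the whole setup is symmetric in $\sigma$ and $\sigma^{-1}$, each correspondence runs in both directions; in fact occurrences of $2$-$41$-$3$ in $\sigma^{-1}$ biject with occurrences of $3$-$1$-$4$-$2$ in $\sigma$ having consecutive outer entries, and occurrences of $3$-$14$-$2$ in $\sigma^{-1}$ biject with occurrences of $2$-$4$-$1$-$3$ in $\sigma$ having consecutive outer entries.

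Combining these two correspondences with Corollary 4.2 yields the lemma: $\sigma$ is Baxter $\iff$ $\sigma^{-1}$ has no $2$-$41$-$3$ and no $3$-$14$-$2$ occurrence $\iff$ $\sigma$ has no $3$-$1$-$4$-$2$ occurrence and no $2$-$4$-$1$-$3$ occurrence with first and last entries differing by~$1$; this is also precisely why the lemma is equivalent to Corollary 4.2, as asserted above. The one delicate point is the bookkeeping inside the dictionary --- tracking which of the four pattern letters plays which role once positions and values have been swapped, and checking that the adjacency constraint in $\sigma^{-1}$ lands on exactly the outermost two entries of the $\sigma$-pattern --- so I expect that translation to be the step that needs the most care; beyond it the argument has no substantive obstacle.
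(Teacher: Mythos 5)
Your proposal is correct and takes the same route the paper gestures at: the paper simply asserts that the lemma is ``equivalent to Corollary 4.2 in [drec]'' without spelling out the translation, and your position--value duality argument (converting a vinculated pattern in $\sigma^{-1}$ into an unvinculated pattern in $\sigma$ whose outer entries are consecutive integers, and vice versa) is exactly the calculation that justifies that assertion. The bookkeeping checks out: a $2$-$41$-$3$ occurrence of $\sigma^{-1}$ at positions $a<b<b+1<c$ corresponds to the entries of values $a,b,b+1,c$ in $\sigma$ appearing in the order $b+1,a,c,b$, which is a $3$-$1$-$4$-$2$ occurrence with $|\sigma_l-\sigma_i|=|b-(b+1)|=1$, and the symmetric computation handles $3$-$14$-$2$ and the reverse implications.
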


By Theorem \ref{thm:rho}, there exists a unique linear extension of $P$ that is a Baxter permutation.

\begin{lemma}\label{lemma:xnonempty}
Let $P$ be a Baxter poset and $\sigma$ be the unique Baxter permutation that is a linear extension of $P$.
Then $\sigma$ respects the arrows of $P$.
\end{lemma}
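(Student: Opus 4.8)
The goal is to show that the unique Baxter linear extension $\sigma$ of a Baxter poset $P$ respects the arrow of every region of the natural embedding. Fix a region $R$ of the natural embedding; by Condition~5 of Definition~\ref{def:baxposet} (together with Condition~4), $R$ is bounded below by some $x=\min(R)$ and above by some $y=\max(R)$ with $|x-y|=1$, and the open interval $(x,y)$ splits into a left connected component and a right connected component, with the left component consisting of elements numerically smaller than $x$ and $y$ and the right component of elements numerically larger. Say $x$ covers $x_l$ on the left side of $R$ and $x_r$ on the right side, and $y$ is covered by $x_l'$ on the left and $x_r'$ on the right (these four vertices may coincide in pairs when a side of $R$ is a single edge). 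The arrow of $R$ points right if $y=x+1$ and left if $y=x-1$.

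**Key steps.** First I would reduce ``$\sigma$ respects the arrow of $R$'' to a statement purely about where chains cross a single edge: I want to show that in $\sigma$, \emph{all} labels of the left side of $R$ precede all labels of the right side (if the arrow points right), or vice versa. The natural tool is Lemma~\ref{lemma:4-1to41}: a Baxter permutation contains no $2$-$4$-$1$-$3$ or $3$-$1$-$4$-$2$ occurrence in which the outer two entries differ by $1$. So suppose, for contradiction, that the arrow of $R$ points right (i.e.\ $y=x+1$) but some label $w$ on the right side of $R$ precedes some label $z$ on the left side of $R$ in $\sigma$. Using that $x<_P$ (everything on both sides of $R$) $<_P y$, and that $x$ precedes $w$ and $z$ which precede $y$ in $\sigma$, I would extract from $\sigma$ the subsequence $x\,w\,z\,y$ (after possibly replacing $w,z$ by extreme choices so the order of appearance is $x$ before $w$ before $z$ before $y$). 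Now $x = y-1$; the left-side element $z$ satisfies $z<x$ (numerically), and the right-side element $w$ satisfies $w>y$ (numerically), by the numerical separation in Condition~5. Then $x\,w\,z\,y$ reads as values: middle-low, high, low, middle-high, i.e.\ it realizes the pattern $2$-$4$-$1$-$3$ with outer entries $x$ and $y=x+1$ differing by $1$ — contradicting Lemma~\ref{lemma:4-1to41}. The symmetric case, arrow pointing left ($y=x-1$) with a left label preceding a right label, yields a $3$-$1$-$4$-$2$ occurrence with outer entries differing by $1$, again a contradiction. Finally I would handle the bookkeeping point that ``$x$ precedes $w$ in $\sigma$'': since $x<_P w$ in $P$ (every element of a region's interior is above the region's minimum), this is automatic for a linear extension; similarly $z<_P y$ gives $z$ before $y$. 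The only real content is that $w$ before $z$ forces the forbidden pattern.

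**Main obstacle.** The delicate part is making precise the claim that a violation of ``$\sigma$ respects the arrow of $R$'' can always be witnessed by a \emph{single} pair $w$ (right side) and $z$ (left side) with $w$ before $z$, \emph{and} that we can choose them so the four-element subsequence $x\,w\,z\,y$ has its entries appearing in exactly that positional order — in particular that $x$ really does come before both $w$ and $z$, and $y$ after both. The positional facts follow from $x$ being the poset-minimum and $y$ the poset-maximum of $R$, but one must be careful that $w$ and $z$ are genuinely \emph{in} $R$ (on its sides, not merely comparable to $x$ and $y$) so that the numerical inequalities $z<x<y<w$ from Condition~5 actually apply; elements of $(x,y)$ that are not in $R$ need not obey that separation. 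I expect the cleanest route is: since $[x,y]$ is an interval and $(x,y)=L\sqcup R_{\mathrm{side}}$ as connected components, ``does not respect the arrow'' literally means some label of $R_{\mathrm{side}}$ precedes some label of $L$ (or the reverse), so $w\in R_{\mathrm{side}}$ and $z\in L$ are exactly the sides of the region, and Condition~5 applies on the nose. After that, the pattern-matching against Lemma~\ref{lemma:4-1to41} is routine.
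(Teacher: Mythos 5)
Your argument is correct and follows essentially the same route as the paper: you suppose a linear extension fails to respect the arrow of some region $R$, extract the four-term subsequence (positionally $\min_R$, then the offending pair, then $\max_R$), use Condition~4 to get $|\min_R-\max_R|=1$ and Condition~5 to get the numerical separation of the two sides, and then invoke Lemma~\ref{lemma:4-1to41} to conclude the permutation is not Baxter. The only quibble is a cosmetic slip in your setup (you write that ``$x$ covers $x_l$'' and ``$y$ is covered by $x_l'$'' when the cover relations go the other way, $x=\min(R)$ being covered by its neighbors and $y=\max(R)$ covering its), but this does not touch the actual argument, which identifies the $2$-$4$-$1$-$3$ and $3$-$1$-$4$-$2$ patterns correctly in both arrow directions.
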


\begin{proof}
Let $P$ be a Baxter poset with a natural embedding.  
Let $\sigma$ denote a linear extension that does not respect the arrow of some region $R$ of $P$.
Let $\min_R$ and $\max_R$ respectively denote the minimal and maximal elements of $R$.  
By Condition 4 of Definition \ref{def:baxposet}, we have that $\min_R$ and $\max_R$ differ in value by one.
Since $\sigma$ does not respect the arrow of $R$, there exists a subsequence $\min_R \sigma_i \sigma_j \max_R$ of $\sigma$ such that $\sigma_i$ and $\sigma_j$ 
are contained in the boundary of $R$, one of these contained in the left component of $(\min_R, \max_R)$ and the other contained in the right component of $(\min_R, \max_R)$, and this subsequence is an occurrence of a 2-4-1-3 or a 3-1-4-2 pattern.
Thus, by Lemma \ref{lemma:4-1to41}, $\sigma$ is not a Baxter permutation.
\end{proof}

We make several useful observations about the map $\rho$.
Given a diagonal rectangulation~$D$, if $W$ is a horizontal wall of $D$ and rectangle $a$ is the leftmost rectangle below and adjacent to $W$, then rectangle $a-1$ is the rightmost rectangle above and adjacent to $W$ and $a$ precedes $a-1$ in every permutation $\sigma$ such that $\rho(\sigma)=D$.
Each rectangle below and adjacent to $W$ has a label larger than $a$ and each rectangle above and adjacent to $W$ has a label smaller than $a-1$.
Similarly, if $W$ is a vertical wall of $D$ and rectangle $a$ is the lowermost rectangle left of and adjacent to $W$, then rectangle $a+1$ is the uppermost rectangle right of and adjacent to $W$ and~$a$ precedes $a+1$ in every permutation $\sigma$ such that $\rho(\sigma)=D$.
Additionally, every rectangle left of and adjacent to $W$ has label smaller than $a$ and every rectangle right of and adjacent to $W$ has label larger than $a+1$.

The lemma below follows from the definition of a Baxter permutation, the above observations, and Lemma \ref{lemma:4-1to41}.

\begin{lemma}\label{lemma:wallcond}
Let $D$ be a diagonal rectangulation and $\sigma=\sigma_1\cdots \sigma_n\in S_n$ such that $\rho(\sigma)=D$.
If $\sigma$ is a Baxter permutation, then $\sigma$ satisfies the following properties:
\begin{itemize}
\item If rectangles $\sigma_i$ and $\sigma_j$ are adjacent to a horizontal wall $W$ with rectangle~$\sigma_i$ below $W$ and rectangle $\sigma_j$ above $W$, then $\sigma_i$ precedes $\sigma_j$ in $\sigma$ and 
\item If rectangles $\sigma_i$ and $\sigma_j$ are adjacent to a vertical wall $W$ with rectangle $\sigma_i$ left of $W$ and rectangle $\sigma_j$ right of $W$, then $\sigma_i$ precedes $\sigma_j$ in $\sigma$.
\end{itemize}
\end{lemma}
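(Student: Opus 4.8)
The plan is to prove both bullet points by contradiction, in each case producing a subsequence forbidden by Lemma~\ref{lemma:4-1to41}. I will carry out the horizontal case in detail; the vertical case is symmetric. So let $W$ be a horizontal wall and let $a$ be the leftmost rectangle below $W$; by the observations preceding the lemma, rectangle $a-1$ is then the rightmost rectangle above $W$, every rectangle below $W$ has label at least $a$, and every rectangle above $W$ has label at most $a-1$. Assume toward a contradiction that some rectangle $q$ above $W$ precedes some rectangle $p$ below $W$ in $\sigma$.

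The engine of the proof is two ordering facts that follow directly from the construction of $\rho$, using only that the left and bottom edges of each rectangle $\sigma_i$ lie in the partial boundary $T_{i-1}$ present when $\sigma_i$ is created. (1) If a rectangle below $W$ and a rectangle above $W$ share a segment of $W$, then the lower one is created first --- its top edge already lies in $W$, hence in $T$, when the upper one is built --- so it precedes the other in $\sigma$. (2) Writing the rectangles below $W$ from left to right, with increasing labels, as $b_1=a,\dots,b_s$, one has that $b_j$ precedes $b_{j+1}$ for each $j$, since the left edge of $b_{j+1}$ already borders $b_j$ when $b_{j+1}$ is built; the same holds for the rectangles $c_1,\dots,c_t=a-1$ above $W$. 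Applying (1) at the two endpoints of $W$: the leftmost rectangles $a$ (below) and $c_1$ (above) share a segment of $W$ adjacent to its left endpoint, so $a$ precedes $c_1$; and the rightmost rectangles $b_s$ (below) and $a-1=c_t$ (above) share a segment of $W$ adjacent to its right endpoint, so $b_s$ precedes $a-1$. Chaining these with (2), I obtain that $a$ precedes every rectangle above $W$, and that every rectangle below $W$ precedes $a-1$.

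Now I combine these with the contradiction hypothesis. We have: $a$ before $q$ (as $q$ is above $W$); $q$ before $a-1$ or $q=a-1$ (chain (2) above $W$); $p$ before $a-1$ (as $p$ is below $W$); $a$ before $p$ or $a=p$ (chain (2) below $W$); and $q$ before $p$ (the hypothesis). The identification $q=a-1$ would make $a-1$ both before and after $p$, and $p=a$ would make $a$ both before and after $q$, so in fact $q<a-1<a<p$ in numerical order and the four rectangles occur in $\sigma$ in the order $a,\ q,\ p,\ a-1$. This is an occurrence of the pattern 3-1-4-2 whose first and last entries differ by $1$, contradicting Lemma~\ref{lemma:4-1to41}; hence $\sigma_i$ (below $W$) precedes $\sigma_j$ (above $W$). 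For a vertical wall the same argument --- now with $a$ the lowermost rectangle to the left of the wall and $a+1$ the uppermost rectangle to its right --- produces, in the contrary case, the order $a,\ q,\ p,\ a+1$ with $p<a<a+1<q$, an occurrence of 2-4-1-3 with extreme entries differing by $1$, again contradicting Lemma~\ref{lemma:4-1to41}.

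The step I expect to require the most care is the geometric bookkeeping behind facts (1) and (2): that a given (non-boundary) horizontal edge of a rectangle lies entirely within a single wall, that $T_{i-1}$ genuinely contains the previously-constructed edges invoked, and that the extreme rectangles on the two sides of a wall do overlap along it near its endpoints (so that (1) applies there). Once these are pinned down, the combinatorial part is forced and needs no case analysis beyond the two degenerate identifications $q=a-1$ and $p=a$ ruled out above.
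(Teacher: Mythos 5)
Your proof is correct and matches the paper's approach: the paper gives no detailed argument, simply stating that the lemma ``follows from the definition of a Baxter permutation, the above observations, and Lemma~\ref{lemma:4-1to41}.'' Your write-up supplies exactly the details the paper leaves implicit --- the construction-order chains along each side of $W$ coming from the fact that the left and bottom of each rectangle must lie in $T_{i-1}$, the endpoint adjacencies giving $a \prec c_1$ and $b_s \prec a-1$, and the reduction of a violating pair $(q,p)$ to a $3$-$1$-$4$-$2$ (resp.\ $2$-$4$-$1$-$3$) occurrence $a,q,p,a-1$ (resp.\ $a,q,p,a+1$) with extreme entries differing by one, forbidden by Lemma~\ref{lemma:4-1to41}.
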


To complete the proof of Theorem \ref{thm:baxter},  we refer to a second family of rectangulations, called generic rectangulations.
We need generic rectangulations exclusively to prove Lemma \ref{lemma:prop}, a lemma about diagonal rectangulations, so we only provide the required background related to generic rectangulations from \cite{grec}.
We say that a rectangulation $R$ is a \emph{generic rectangulation} if and only if there exists no set of four rectangles of $R$ that share a vertex.
The set of diagonal rectangulations with~$n$ rectangles is a subset of the set of generic rectangulations with $n$ rectangles.

As with diagonal rectangulations, there is a map $\gamma$ that takes a permutation on~$[n]$ to a generic rectangulation of size $n$ (see \cite[Section 3]{grec}) and restricts to a bijection between a subset of $S_n$ and generic rectangulations containing $n$ rectangles.
We will not need a complete description of $\gamma$, so we instead quote the required results.

We say that a permutation $\sigma=\sigma_1\cdots \sigma_n$ \emph{avoids the pattern 2-4-51-3 and the pattern $4\text{-}2\text{-}51\text{-}3$} if there does not exist $i<j<k<k+1<l$ such that $\sigma_{k+1}<\sigma_i<\sigma_l<\sigma_j<\sigma_k$ or $\sigma_{k+1}<\sigma_j<\sigma_l<\sigma_i<\sigma_k$.
Similary, we say that $\sigma$ \emph{avoids the pattern 3-51-2-4 and the pattern 3-51-4-2} if there does not exist $i<j<j+1<k<l$ such that $\sigma_{j+1}<\sigma_k<\sigma_i<\sigma_l<\sigma_j$ or $\sigma_{j+1}<\sigma_l<\sigma_i<\sigma_k<\sigma_j$.

\begin{theorem}[{\cite[Theorem 4.1]{grec}}]
The map $\gamma$ restricts to a bijection between permutations of $[n]$ that avoid the patterns $\{$2-4-51-3, 4-2-51-3, 3-51-2-4, 3-51-4-2$\}$ and generic rectangulations containing $n$ rectangles.
\end{theorem}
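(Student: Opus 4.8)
The plan is to realize the fibers of $\gamma$ as the congruence classes of a lattice congruence of the right weak order on $S_n$, in direct analogy with the treatment of $\rho$ via Propositions \ref{prop:equivclass} and \ref{prop:minmax}, and then to identify the four listed pattern-avoiding permutations as the minimal elements of these classes. Throughout, I will call a transposition of two entries of a permutation $\sigma$ a \emph{move} if those entries occupy adjacent positions $k,k+1$ with $\sigma_k>\sigma_{k+1}$ and, together with three other entries, they form an occurrence of one of 2-4-51-3, 4-2-51-3, 3-51-2-4, 3-51-4-2 (so that the transposition produces the corresponding 2-4-15-3, 4-2-15-3, 3-15-2-4, or 3-15-4-2 occurrence and removes exactly one inversion).

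First I would establish the analogue of Proposition \ref{prop:equivclass}: $\gamma(\sigma)=\gamma(\psi)$ if and only if $\sigma$ and $\psi$ are connected by a sequence of moves. One inclusion should follow locally from the construction of $\gamma$ in \cite{grec}: performing a move on a permutation leaves unchanged the combinatorial position at which each successive rectangle is inserted, and hence leaves the output rectangulation unchanged. For the reverse inclusion I would argue that if $\sigma\neq\psi$ but $\gamma(\sigma)=\gamma(\psi)$, then the arrangement of walls of the common rectangulation — in particular its T-junctions — forces $\sigma$ to possess an adjacent descent whose transposition is a move and which strictly decreases the number of inversions of $\sigma$ relative to $\psi$; iterating produces a connecting sequence of moves.

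Next I would show that the relation $\equiv$ given by $\sigma\equiv\psi \iff \gamma(\sigma)=\gamma(\psi)$ is a lattice congruence of the right weak order. Since every move transposes an adjacent descent, $\equiv$ is generated by contracting a prescribed set of cover relations of $\mathrm{Weak}(S_n)$, so it suffices to verify that this set is closed under the forcing order on cover relations, using the lattice-congruence machinery of \cite{Lat} (equivalently, that the quotient poset is a lattice). Granting this, each $\equiv$-class is an interval of the weak order and therefore has a unique minimal element, so $\gamma$ restricts to a bijection from the set of permutations minimal in their $\equiv$-class onto the set of generic rectangulations. To finish, I would check that $\sigma$ is minimal in its class exactly when it avoids all four patterns: if $\sigma$ contains one of them, applying the move to its ``$51$'' yields a smaller element of the same class, so $\sigma$ is not minimal; conversely, if $\sigma$ is not minimal, some adjacent transposition of a descent $\sigma_k\sigma_{k+1}$ of $\sigma$ keeps $\sigma$ in its class, and by the first step that transposition is a move, which forces $\sigma$ to contain one of the four patterns with $\sigma_k$ in the role of ``$5$'' and $\sigma_{k+1}$ in the role of ``$1$''.

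I expect the main obstacle to be the reverse inclusion of the first step together with the lattice-congruence verification: pinning down precisely which adjacent transpositions preserve $\gamma$, and checking that contracting exactly those cover relations yields a lattice quotient, requires a careful analysis of how the walls and T-junctions of a generic rectangulation constrain the orders in which its rectangles may be inserted, and it is there that the specific shapes of the four patterns — especially the requirement that the ``$51$'' occupy adjacent positions — are actually pinned down.
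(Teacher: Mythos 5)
The paper does not prove this statement; it is imported verbatim as Theorem~4.1 of Reading's paper on generic rectangulations (\cite{grec}), and the present paper treats it as a black box alongside the companion Proposition~4.3 of that reference. There is therefore no in-paper argument to compare against, so I will instead assess your outline on its own terms and against what is actually done in \cite{grec}.

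Your overall plan — realize the fibers of $\gamma$ as classes of a lattice congruence on the right weak order, in analogy with the role of the $(2\text{-}41\text{-}3\leftrightarrow 2\text{-}14\text{-}3)$ and $(3\text{-}41\text{-}2\leftrightarrow 3\text{-}14\text{-}2)$ moves for $\rho$, and then identify the four pattern-avoiding permutations as the bottoms of the classes — is the right framework and is indeed the one used in \cite{grec}. The move characterization you posit as a first step is precisely the Proposition~4.3 that the paper at hand separately quotes (here as Proposition~\ref{prop:equivclassgrec}), so your proof scaffolding is aligned with the source. However, you have quite reasonably flagged the two hardest pieces without supplying them, and these are exactly where all of the content lives: (a) the reverse inclusion of the move characterization, i.e.\ that an arbitrary pair $\sigma\neq\psi$ with $\gamma(\sigma)=\gamma(\psi)$ is connected by a chain of moves, which requires a close analysis of how the wall shuffles of $\gamma(\sigma)$ constrain adjacent transpositions; and (b) the verification that the resulting equivalence is a lattice congruence, which in Reading's treatment is done via the ``forcing'' machinery of \cite{Lat} and is not automatic from the existence of the moves. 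Without these, the proposal is a plan, not a proof.

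One smaller logical gap worth naming: in your last step you argue that if $\sigma$ is not minimal in its class then ``some adjacent transposition of a descent of $\sigma$ keeps $\sigma$ in its class, and by the first step that transposition is a move.'' This does not follow immediately from the move characterization alone, which says the two permutations are connected by \emph{some} chain of moves, not that the single covering transposition $\psi\lessdot\sigma$ is itself one of the prescribed moves. To close this you either need to strengthen the first step to say that a single $\gamma$-preserving adjacent transposition is necessarily a move (which is, in effect, how \cite{grec} organizes the argument), or invoke the lattice-congruence structure to conclude that every non-minimal element of a class admits a contracted cover, and that the contracted covers are exactly the moves. As written, the converse direction of ``minimal iff pattern-avoiding'' has a hole. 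The forward direction (a pattern occurrence yields a downward move, hence non-minimality) is fine.
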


We say that two permutations $\sigma$ and $\psi$ are related by a ($2$-4-$15$-$3 \leftrightarrow 2$-4-$51$-$3$) \emph{move} if one of these permutations contains a subsequence $\sigma_{i_1} \sigma_{i_2} \sigma_{i_3} \sigma_{i_4}\sigma_{i_5}$ that is an occurrence of the pattern 2-4-51-3 and switching the positions of the adjacent entries $\sigma_{i_3}$ and $\sigma_{i_4}$ in that permutation results in the other permutation.  
We say that~$\sigma$ and $\psi$ are related by a \emph{${(4\text{-}2\text{-}15\text{-}3} \leftrightarrow {4\text{-}2\text{-}51\text{-}3)}$ move}, a \emph{$(3$-$15$-$2$-$4 \leftrightarrow 3$-$51$-$2$-$4)$ move}, or a \emph{$(3$-$15$-$4$-$2\leftrightarrow 3$-$51$-$4$-$2)$ move} if~$\sigma$ and $\psi$ satisfy the analogous conditions with these patterns. 

\begin{proposition}[{\cite[Proposition 4.3]{grec}}]\label{prop:equivclassgrec}
Two permutations $\sigma$ and $\psi$ satisfy $\gamma(\sigma)=\gamma(\psi)$ if and only if they are related by a sequence of $(2$-$4$-$15$-$3 \leftrightarrow 2$-$4$-$51$-$3)$ moves, ${(4\text{-}2\text{-}15\text{-}3} \leftrightarrow 4$-$2$-$51$-$3)$ moves, $(3$-$15$-$2$-$4 \leftrightarrow 3$-$51$-$2$-$4)$ moves, and $(3$-$15$-$4$-$2\leftrightarrow 3\text{-}51\text{-}4\text{-}2)$ moves. 
\end{proposition}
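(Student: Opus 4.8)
This is Proposition 4.3 of \cite{grec}; the plan below is the natural one, and it runs in close parallel to Proposition \ref{prop:equivclass} for the diagonal-rectangulation map $\rho$. There are two directions. For the ``if'' direction I would show that a \emph{single} move preserves $\gamma$ and then induct on the length of the sequence of moves. To see that one move --- say a (2-4-15-3 $\leftrightarrow$ 2-4-51-3) move, which transposes the positionally adjacent entries playing the roles of ``1'' and ``5'' --- preserves $\gamma$, I would run the insertion processes defining $\gamma(\sigma)$ and $\gamma(\psi)$ side by side: the insertions before those two positions coincide, the pattern inequalities force the two rectangles being inserted to occupy a configuration whose union and outer boundary are independent of the order of insertion, and the insertions afterward coincide as well; hence $\gamma(\sigma)=\gamma(\psi)$. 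The other three move types are handled in exactly the same way.

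For the ``only if'' direction --- the substantive one --- I would imitate the last step of the proof of Theorem \ref{thm:adjposet}. First I would invoke the structural description of the fibers of $\gamma$ from \cite{grec}: each fiber $\gamma^{-1}(R)$ is the set of linear extensions of a poset $P_R$ on $[n]$ built from the generic rectangulation $R$. Since any two linear extensions of a finite poset are connected by a sequence of transpositions of positionally adjacent, incomparable entries, it suffices to check that each such transposition is one of the four moves. So let $\sigma$ be a linear extension of $P_R$ with positionally adjacent, $P_R$-incomparable entries $\sigma_j$ and $\sigma_{j+1}$. Their incomparability in $P_R$ should force a five-rectangle obstruction in $R$: rectangles $\sigma_j$ and $\sigma_{j+1}$ meet only along a single wall, with further rectangles pinning the pair from the two sides of that wall. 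Reading off the labels of $\sigma_j \wedge \sigma_{j+1}$, of $\sigma_j \vee \sigma_{j+1}$, and of the two pinning rectangles produces four positions which, together with $\sigma_j$ and $\sigma_{j+1}$ in the ``51''/``15'' slots, realize precisely one of the patterns 2-4-51-3, 4-2-51-3, 3-51-2-4, 3-51-4-2; hence the transposition is a move, as required.

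The hard part is this final implication: passing from the bare combinatorial fact that $\sigma_j$ and $\sigma_{j+1}$ are incomparable in $P_R$ to the precise geometry in $R$ --- which wall the two rectangles share and exactly where the pinning rectangles must sit --- and then verifying that the configurations that can occur are exactly the four listed pattern types and nothing more. This is the generic-rectangulation counterpart of the wall analysis carried out in the proof of Theorem \ref{thm:adjposet}, and it is the one place where the argument genuinely depends on the definition of $\gamma$, which we have not reproduced here.
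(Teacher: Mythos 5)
The paper does not prove this statement; it is cited verbatim from \cite[Proposition~4.3]{grec} and invoked as a black box in the proof of Lemma~\ref{lemma:prop}. So there is no in-paper proof to compare against, and what follows is an assessment of your sketch on its own terms.

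Your ``if'' direction, as written, does not line up with how $\gamma$ is actually defined. The paper describes $\gamma$ not as a rectangle-by-rectangle insertion process but as a two-stage construction: first build $\rho(\sigma)$, then reorder the vertices along each wall so that the wall shuffle agrees with $\sigma$. Running ``insertion processes side by side'' is therefore not a move you can literally make. The clean way to get the ``if'' direction is different and worth noting: each of the four $5$-letter moves contains, on the swapped pair together with two of the outer letters, one of the $4$-letter moves of Proposition~\ref{prop:equivclass} (for instance, dropping the ``$4$'' from $2$-$4$-$15$-$3 \leftrightarrow 2$-$4$-$51$-$3$ gives a $2$-$14$-$3 \leftrightarrow 2$-$41$-$3$ move), so $\rho$ is preserved; one then checks separately that the presence of both outer letters forces the relevant wall shuffle to be unchanged. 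That second check is the part that genuinely uses the definition of $\gamma$.

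The ``only if'' direction has a more serious structural problem. You appeal to ``the structural description of the fibers of $\gamma$: each fiber is the set of linear extensions of a poset $P_R$.'' For $S_n$ with the weak order, this statement is essentially equivalent (via Proposition~\ref{prop:scapt}) to saying the fibers are intervals, which is in turn essentially equivalent to saying the fibers are congruence classes of a lattice congruence --- and proving that last fact, together with identifying which weak-order edges the congruence contracts, \emph{is} the content of the proposition you are trying to prove. Invoking it as an input makes the argument circular unless you can exhibit an independent proof that the fibers are intervals, which your sketch does not supply. You also acknowledge that the final step --- reading off the five rectangles of $R$ that witness one of the four patterns from the incomparability of $\sigma_j$ and $\sigma_{j+1}$ in $P_R$ --- is the ``hard part'' and is left undone; that analysis is precisely where the four patterns (and no others) would have to emerge, so without it the claim is not established. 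In short: the overall shape of the argument is a reasonable analogue of the proof of Theorem~\ref{thm:adjposet}, but the ``if'' half needs to be rebuilt around wall shuffles rather than insertion, and the ``only if'' half currently assumes something too close to the conclusion.
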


The map~$\gamma$ labels each rectangle of the constructed generic rectangulation with an element of~$[n]$.
Given a generic rectangulation $R$, this labeling of rectangles is unique i.e., if $x, y \in S_n$ such that $\gamma(x)=\gamma(y)$, then the labeling of the rectangles obtained from $\gamma(x)$ agrees with the labeling of the rectangles obtained from $\gamma(y)$.
Thus we can refer to the rectangle of $R$ with label $i$ as \emph{rectangle $i$}.

Given a generic rectangulation $R$ and a wall $W$ of $R$, the \emph{wall shuffle of $W$}, denoted $\sigma_W$, records the order in which the rectangles adjacent to $W$ appear along~$W$.
Specifically, let~$W$ be a horizontal wall of $R$.
To find the wall shuffle of $W$, temporarily label each vertex contained in $W$ as follows.
If the vertex is the upper-left vertex of some rectangle $x$, then label the vertex with $x$.
Otherwise, the vertex is the lower-right vertex of some rectangle~$y$, and we label it with $y$.
The left-to-right ordering of the vertices along $W$ provides an ordering of these vertex labels, and this ordering is $\sigma_W$.
Similarly, if $W$ is a vertical wall of $R$, we temporarily label the vertices contained in $W$.
We label a vertex with $x$ if it is the lower-right vertex of rectangle~$x$.
Otherwise, the vertex is the upper-left vertex of some rectangle~$y$ and we label the vertex with $y$.
The bottom-to-top order of these labels along $W$ gives us~$\sigma_W$.

The map $\gamma$ constructs a generic rectangulation $R$ from a permutation in two steps.
Given~$\sigma \in S_n$, we first construct $\rho(\sigma)$.
Then, for each wall of $\rho(\sigma)$, the vertices are labeled as described above.
Finally, the vertices (and the attached edges) are reordered along each wall so that the wall shuffle of each wall is a subsequence of $\sigma$.
For us, the key point is that, to specify a generic rectangulation, it suffices to identify the associated diagonal rectangulation and an order of the vertices along each wall (i.e. a wall shuffle for each wall).

Given a Baxter permutation $\sigma$, the conditions given in Lemma \ref{lemma:wallcond} specify the wall shuffles of the associated generic rectangulation $\gamma(\sigma)$.
As a result, we can make use of generic rectangulations to prove the following lemma.

\begin{lemma}\label{lemma:prop}
Let $D$ be a diagonal rectangulation.  
Then there is a unique permutation $\sigma$ such that $\rho(\sigma)=D$ and such that $\sigma$ satisfies the properties given in Lemma~\ref{lemma:wallcond}.
This permutation $\sigma$ is the Baxter permutation associated with $D$.
\end{lemma}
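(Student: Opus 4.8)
The plan is to prove existence and uniqueness separately, using the theory of generic rectangulations as the technical engine, exactly as the paragraph preceding the lemma sets up.

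\textbf{Existence.} Let $\tau$ be the unique Baxter permutation with $\rho(\tau)=D$, which exists by Theorem~\ref{thm:rho}. By Lemma~\ref{lemma:wallcond}, $\tau$ itself satisfies the two wall-ordering properties, so the set of permutations mapping to $D$ and satisfying the properties of Lemma~\ref{lemma:wallcond} is nonempty.

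\textbf{Uniqueness.} Suppose $\sigma$ and $\sigma'$ both satisfy $\rho(\sigma)=\rho(\sigma')=D$ and both satisfy the properties of Lemma~\ref{lemma:wallcond}. The key point, stressed in the paragraph before the lemma, is that a generic rectangulation is determined by its underlying diagonal rectangulation together with the wall shuffle of each wall. So I would argue that the wall shuffles of $\gamma(\sigma)$ and $\gamma(\sigma')$ coincide, hence $\gamma(\sigma)=\gamma(\sigma')$; then invoke Proposition~\ref{prop:equivclassgrec} to conclude $\sigma$ and $\sigma'$ are related by a sequence of $(2\text{-}4\text{-}15\text{-}3\leftrightarrow 2\text{-}4\text{-}51\text{-}3)$, $(4\text{-}2\text{-}15\text{-}3\leftrightarrow 4\text{-}2\text{-}51\text{-}3)$, $(3\text{-}15\text{-}2\text{-}4\leftrightarrow 3\text{-}51\text{-}2\text{-}4)$, and $(3\text{-}15\text{-}4\text{-}2\leftrightarrow 3\text{-}51\text{-}4\text{-}2)$ moves. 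But each such move transposes the adjacent ``$1$" and ``$5$" of the pattern, and I would check that in each of these four patterns the ``$1$"-rectangle and the ``$5$"-rectangle are separated by a common wall in $D$ in one of the two configurations of Lemma~\ref{lemma:wallcond}: indeed the observations about $\rho$ recalled just before Lemma~\ref{lemma:wallcond} (leftmost rectangle below a horizontal wall, lowermost rectangle left of a vertical wall, and the value constraints on the other adjacent rectangles) force the ``$5$" and ``$1$" to lie on opposite sides of such a wall. Hence if $\sigma$ satisfies the Lemma~\ref{lemma:wallcond} properties it admits no such move, so the equivalence class of $\sigma$ under these moves is a singleton, giving $\sigma=\sigma'$. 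Combining with existence, there is exactly one such permutation, and since $\tau$ is one such permutation, it equals $\tau$, the Baxter permutation associated with $D$.

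\textbf{Main obstacle.} The delicate step is the claim that in each of the four generic-rectangulation exchange patterns the ``$1$" and ``$5$" rectangles are adjacent across a single wall of $D$ of exactly the type described in Lemma~\ref{lemma:wallcond}; this requires a careful geometric reading of how the wall shuffle reorders vertices along a wall and how that interacts with the positions of the intermediate pattern entries, using that the two entries being swapped are adjacent in the permutation and hence adjacent along that wall. I would handle this by analyzing a single representative wall (say a horizontal one, with the vertical case symmetric), identifying the ``$5$"-rectangle as one below the wall and the ``$1$"-rectangle as one above it — consistent with the value inequalities ``$1<$ everything $<5$'' in the patterns and with the $\rho$-observations that rectangles below a horizontal wall carry the larger labels — and then noting that the Lemma~\ref{lemma:wallcond} property forces the below-rectangle to precede the above-rectangle, contradicting the pattern order that would be required for the move. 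Once this wall-analysis is in place, the rest is a direct appeal to Proposition~\ref{prop:equivclassgrec} and Theorem~\ref{thm:rho}.
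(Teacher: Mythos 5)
The overall shape of your argument—existence via Lemma~\ref{lemma:wallcond}, and uniqueness by showing the wall shuffles of $\gamma(\sigma)$ and $\gamma(\sigma')$ agree, applying Proposition~\ref{prop:equivclassgrec}, and then ruling out the moves—matches the paper's up to the last step. But the last step, which you flag yourself as the delicate one, rests on a geometric claim that is in fact false.

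You claim that for each of the four move-patterns, the rectangles playing the roles of the adjacent ``$1$'' and ``$5$'' lie on opposite sides of a common wall $W$ of $D$, so that the wall condition of Lemma~\ref{lemma:wallcond} pins down their order. This cannot be so. Every rectangle adjacent to a wall $W$ contributes exactly one vertex to $W$ and hence appears in the wall shuffle $\sigma_W$, and by construction $\sigma_W$ is a subsequence of any permutation $\tau$ with $\gamma(\tau)$ equal to that generic rectangulation. If the ``$1$'' and ``$5$'' rectangles were both adjacent to some wall $W$, then their relative order would be forced by $\sigma_W$ in \emph{every} permutation in the $\gamma$-fiber; in particular the move swapping them could not preserve $\gamma$, contradicting Proposition~\ref{prop:equivclassgrec}. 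So the ``$1$'' and ``$5$'' of a generic-rectangulation move are never both adjacent to a common wall, and Lemma~\ref{lemma:wallcond} says nothing about their relative order. (A secondary problem: even if the geometric claim were true, it would only contradict the ``$15$''-ordered half of the eight patterns, not the ``$51$''-ordered half, since the wall condition forces one specific order and that order is consistent with four of the eight patterns.)

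The paper instead rules out the moves by a pattern-containment observation about the Baxter permutation $\sigma$ itself, not about the wall condition: in an occurrence $\sigma_i\sigma_j\sigma_k\sigma_{k+1}\sigma_l$ of, say, $2$-$4$-$15$-$3$, the subsequence $\sigma_j\sigma_k\sigma_{k+1}\sigma_l$ is an adjacent occurrence of $3$-$14$-$2$; similarly each of the eight patterns contains an adjacent occurrence of $2$-$41$-$3$ or $3$-$14$-$2$. Since $\sigma$ is a Baxter permutation it avoids these, so it admits none of the moves, and therefore $\psi=\sigma$. This replaces your wall-geometry step with a short case check on the patterns and is the correct way to close the argument.
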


\begin{proof}
Let $D$ be a diagonal rectangulation and $\sigma$ the unique Baxter permutation such that $\rho(\sigma)=D$.
The permutation $\sigma$ satisfies the properties given in Lemma~\ref{lemma:wallcond}.
Assume that there exists a second permutation $\psi$ such that $\rho(\psi)=D$ and $\psi$ satisfies the properties given in Lemma~\ref{lemma:wallcond}.
Since $\rho(\sigma)=\rho(\psi)$ and the wall shuffles of $\gamma(\sigma)$ agree with the wall shuffles of~$\gamma(\psi)$, we have that $\gamma(\sigma)=\gamma(\psi)$.
Thus, by Proposition~\ref{prop:equivclassgrec}, the permutations $\sigma$ and $\psi$ are related by a sequence of adjacent transpositions in which each transposition is a (2-4-15-3$\leftrightarrow$ 2-4-51-3) move, a ${(4\text{-}2\text{-}15\text{-}3} \leftrightarrow$ 4-2-51-3) move, a (3-15-2-4 $\leftrightarrow$ 3-51-2-4) move, or a (3-15-4-2$\leftrightarrow$3-51-4-2) move.
This implies that some subsequence of $\sigma$ is an occurrence of one of these eight patterns.
 First, assume that $\sigma_i \sigma_j \sigma_k \sigma_{k+1} \sigma_l$ is an occurrence of the pattern 2-4-15-3 in $\sigma$.
 This means that $\sigma_k<\sigma_i<\sigma_l<\sigma_j<\sigma_{k+1}$ in numerical order.
 However, this implies that the subsequence $\sigma_j\sigma_k \sigma_{k+1}\sigma_l$ is an occurrence of the pattern 3-14-2 in~$\sigma$, contradicting our assumption that~$\sigma$ is a Baxter permutation.
If~$\sigma$ contains an occurrence of one of the other seven patterns, then we similarly show that~$\sigma$ is not a Baxter permutation.
We conclude that the unique permutation mapping to~$D$ under~$\rho$ and satisfying the properties of Lemma \ref{lemma:wallcond} is the Baxter permutation~$\sigma$. 
\end{proof}

\begin{lemma}\label{lemma:arrow}
Let $D$ be a diagonal rectangulation with Baxter poset $P$ naturally embedded in the plane.  
If a linear extension $\sigma$ of $P$ respects the arrows of $P$ then~$\sigma$ satisfies the properties of Lemma \ref{lemma:wallcond}.
\end{lemma}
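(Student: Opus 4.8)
The plan is to verify the two bulleted properties directly, treating vertical walls in detail; horizontal walls will follow from a mirror‑image argument. Fix a vertical wall $W$ of $D$, a rectangle $u$ adjacent to $W$ on the left, and a rectangle $v$ adjacent to $W$ on the right; I must show that $u$ precedes $v$ in $\sigma$. Reading the adjacency poset off Theorem~\ref{thm:adjcovers} (or directly from the relation $Q$), the rectangles adjacent to $W$ on the left form a chain of $P$ ordered from bottom to top, as do those adjacent on the right, and a left rectangle $x$ and a right rectangle $y$ are comparable — and then $x<_P y$ — exactly when the segment of $W$ bordering $x$ does not lie entirely above the segment bordering $y$. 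If $u<_P v$, then $u$ precedes $v$ since $\sigma$ is a linear extension of $P$; and $v<_P u$ is impossible, because by the definition of $Q$ it would require a point of $\operatorname{int}(v)$ and a point of $\operatorname{int}(u)$ differing by a vector with positive coordinates, which cannot happen since $\operatorname{int}(v)$ lies entirely to the right of the line containing $W$ and $\operatorname{int}(u)$ entirely to its left. So I may assume $u$ and $v$ are incomparable, with $u$'s segment on $W$ strictly above $v$'s.

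Next I would pin down the region of the natural embedding that separates $u$ from $v$. Let $a$ be the lowermost rectangle adjacent to $W$ on the left. By the observations about $\rho$ recalled just before Lemma~\ref{lemma:wallcond}, $a+1$ is the uppermost rectangle adjacent to $W$ on the right, every rectangle adjacent to $W$ on the left has label at most $a$, and every rectangle adjacent to $W$ on the right has label at least $a+1$. Since $u$'s segment on $W$ lies strictly above $v$'s, $u$ is not the lowermost left rectangle and $v$ is not the uppermost right rectangle, whence $a<_P u<_P a+1$ and $a<_P v<_P a+1$, while the label of $u$ is strictly less than $a$ and the label of $v$ strictly greater than $a+1$. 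Thus $u$ and $v$ both lie in the open interval $(a,a+1)$, in opposite numerical halves. I claim $(a,a+1)$ is disconnected: a cover $p\lessdot_P q$ with $p,q\in(a,a+1)$, $p$ of label less than $a$ and $q$ of label greater than $a+1$, would make $a<_P p\lessdot_P q<_P a+1$ a $2$-$14$-$3$ chain (in numerical order $p<a<a+1<q$), and a cover $q\lessdot_P p$ of the same pair would give a $2$-$41$-$3$ chain; both are forbidden by Condition~3 of Definition~\ref{def:baxposet}. Hence $[a,a+1]$ is an interval of $P$ whose open part is disconnected, so it is a region $R$ of the natural embedding with $\min_R=a$, $\max_R=a+1$, and therefore a right‑pointing arrow; and by Condition~5 of Definition~\ref{def:baxposet} the elements of $(a,a+1)$ of label less than $a$ form the left component (so it contains $u$) and those of label greater than $a+1$ form the right component (so it contains $v$).

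Now, since $\sigma$ respects the arrow of $R$, every element on the left of $R$ precedes every element on the right of $R$ in $\sigma$, so $u$ precedes $v$. (If one wants to be careful about the distinction between the left \emph{side} of $R$ and its left \emph{component}, one uses that $\sigma$ respects the arrows of \emph{all} regions: an element of the left component that is not on the left boundary of $R$ lies on the left side of a region nested immediately to its right, and chaining those arrow conditions places it before the left boundary of $R$, hence before the entire right component; symmetrically on the right.) The second bulleted property follows from the mirror argument for a horizontal wall $W$: now $a$ is the leftmost rectangle below $W$, $a-1$ is the rightmost rectangle above $W$, the separating interval is $[a,a-1]$ with $\min_R=a$, $\max_R=a-1$ and hence a left‑pointing arrow, $u$ (below $W$) lies in the right component and $v$ (above $W$) in the left component, and the forbidden chains used to show $(a,a-1)$ is disconnected are $3$-$14$-$2$ and $3$-$41$-$2$.

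The main obstacle is the middle step: recognizing that the extremal rectangles $a$ and $a+1$ of the wall $W$ actually cut out a region of the natural embedding — that is, that $(a,a+1)$ is disconnected — and then matching its two components to the numerical order. This is precisely where Condition~3 (pattern avoidance) and Condition~5 of Definition~\ref{def:baxposet} are used; everything else is bookkeeping about the map $\rho$ and about chains of rectangles along a wall.
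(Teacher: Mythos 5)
Your approach is a clean pair-by-pair variant of the paper's, and everything up to the identification of the region $R$ is fine: you fix $u$ left of $W$ and $v$ right of $W$, reduce to the incomparable case, show both lie in the open interval $(a,a+1)$, use the forbidden chains of Condition~3 of Definition~\ref{def:baxposet} to prove $(a,a+1)$ is disconnected, and invoke Condition~5 to sort the two components numerically. This produces the same region $R$, with the same min, max, and arrow direction, that the paper's proof builds. The gap is in the last step, exactly where your parenthetical appears. ``Respecting the arrow of $R$'' constrains only the labels on the two boundary chains of $R$ (the left \emph{side} and right \emph{side}); you have not shown that $u$ and $v$ lie on those chains, only that they lie somewhere in the left and right \emph{components} of $(a,a+1)$, which are in general larger. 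Your proposed repair -- chaining through nested regions -- tacitly needs every nested region $R'$ inside the left component to carry a right-pointing arrow, so that $u$ is pushed step by step toward the boundary of $R$. But Condition~4 only gives $|\min_{R'}-\max_{R'}|=1$; it does not force $\max_{R'}=\min_{R'}+1$. A nested arrow can point left, in which case the arrow condition for $R'$ places the elements to the \emph{right} of $R'$ \emph{before} $u$, the opposite of what your chain requires, and you give no argument ruling this out.

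The paper closes this gap by proving a stronger geometric fact: every rectangle adjacent to $W$ in fact lies on the boundary of the single region $R$. This is the bulk of the paper's proof -- the contradiction involving the hypothetical element $d$ on a nested region $R'$ and the natural embedding (Figure~\ref{fig:wall}). Once $u$ and $v$ are known to lie on the two boundary chains of $R$, the arrow of $R$ alone compares every left rectangle to every right one. Your proof is missing exactly this ``all on one region'' fact, or a correct replacement for the chaining step; as written, the final conclusion does not follow. (A much smaller issue: your justification that $v<_P u$ is impossible invokes the definition of $Q$ as if it were a single covering step, but $Q$ involves a transitive closure, so one either needs antisymmetry after first exhibiting $u<_P v$ directly, or an argument about chains; this is easily patched but worth tightening.)
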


\begin{proof}
To show that $\sigma$ satisfies the properties of Lemma \ref{lemma:wallcond}, we will show that~$\sigma$ satisfies these properties for each possible configuration of rectangles adjacent to the wall.

First assume that on at least one side of the wall $W$ there is only one adjacent rectangle. 
Let $W$ be a horizontal wall with a single rectangle, rectangle $r_1$, below $W$ and sequence of rectangles $r_2,\ldots,r_l$ above $W$.
For all $i\in \{1,\ldots,l-1\}$, an interior point of rectangle $i$ is strictly below and left of an interior point of rectangle $i+1$. 
Thus, by the definition of the adjacency poset and Theorem \ref{thm:adjposet}, we have that $r_1<_P r_2<_P\cdots<_Pr_l$.
If $W$ is horizontal with a single rectangle, rectangle $r_l$, above $W$ and sequence of rectangles $r_1,\ldots,r_{l-1}$ below $W$, then we reach the same conclusion.
In either case, in $P$, the labels of the rectangles adjacent to $W$ form a chain and, in this chain, all labels of rectangles below~$W$ precede all labels of rectangles above $W$. 
When $W$ is a vertical wall with a single rectangle either left of or right of $W$, the argument is the same.
In these cases, we conclude that the labels of rectangles adjacent to $W$ form a chain in $P$ and the labels of rectangles left of $W$ precede the labels of rectangles right of $W$ in this chain.
Thus every linear extension of~$P$ satisfies the properties of Lemma \ref{lemma:wallcond} for walls that are adjacent to exactly one rectangle on at least one side.

Now assume that on both sides of the wall $W$ there are at least two adjacent rectangles. 
We will prove the claim that if $W$ is a horizontal wall, then the labels of rectangles adjacent to $W$ form a subset of the labels adjacent to some region of $P$.
Let $W$ be horizontal and, as illustrated in the left diagram of Figure \ref{fig:wall}, label from left to right the rectangles adjacent to and below $W$ with the sequence $b_1,\ldots,b_i$.
Label the rectangles adjacent to and above~$W$, again from left to right, $a_1,\ldots,a_j$.
Since $D$ is diagonal and rectangles~$b_1$ and~$a_1$ are the leftmost rectangles adjacent to~$W$, these rectangles form the configuration shown in Diagram~$(i)$ of Figure \ref{fig:adjcovers}.
Thus, by Theorem~\ref{thm:adjcovers}, we have that $b_1\lessdot_P a_1$.
If $a_1<_P b_2$, then there exists a sequence of $x_k$s such that $a_1\lessdot_P x_1\lessdot_P \cdots \lessdot_P x_l\lessdot b_2$.
Since $b_1\lessdot_P a_1$, and $b_2<_P a_j$, for each $k\in [l]$ we have that $b_1<_P x_k<_P a_j$.
Thus each rectangle~$x_k$ is contained either in the region above $W$ and left of the line containing the left side of rectangle~$a_j$ or below $W$ and right of the line containing the right side of rectangle~$b_1$.
But in $P$, no rectangle in the first of these regions covers a rectangle in the second of these regions.
We see by this contradiction that $a_1 \nless_P b_2$.
Since $b_1<_P b_2$ and $a_1 \nless_Pb_2$, there exists some $c$ such that $b_1\lessdot_Pc$ and $c \neq a_1$.
By Theorem~\ref{thm:adjcovers}, rectangle $c$ is adjacent to the right side of rectangle $b_1$.
Since rectangles $b_1$, $a_1$ and~$c$ form a configuration shown in Diagram (ii) or (iii) of Figure \ref{fig:intervalpf}, we have that $a_1 \vee c = a_j$ (as shown in the proof of Theorem \ref{thm:adjposet}).
This implies that $b_1$ and~$a_j$ are contained in a shared region $R$ of the embedded poset.
Observe that for each $k \in [i]$, the lower-left vertex of rectangle $b_k$ is strictly below and left of the upper-right vertex of rectangle $b_i$ so $b_k<_P b_i<_P a_j$.
Similarly, for each $l \in [j]$, we have that $a_l <_P a_j$.

\begin{figure}
\begin{tabular}{c c c c}
\adjustbox{valign=c}{\begin{tikzpicture}
\draw(0,0)--(6,0);
\draw(0,-1)--(0,1);
\draw(6,-1)--(6,1);
\draw(2.8,0)--(2.8,1);
\draw(3.2,0)--(3.2, -1);
\draw (2,0)--(2,1);
\draw(4,0)--(4,-1);
\draw(1,0)--(1,1);
\draw (5,0)--(5,-1);
\draw[fill] (4.25, -.5) circle (.2ex);
\draw[fill] (4.5, -.5) circle (.2ex);
\draw[fill] (4.75,-.5) circle (.2ex);

\draw[fill] (1.75, .5) circle (.2ex);
\draw[fill] (1.5, .5) circle (.2ex);
\draw[fill] (1.25, .5) circle (.2ex);

\node at (.5, .5) {$a_1$};
\node at (2.4, .5) {$a_{j-1}$};
\node at (4.4, .5) {$a_j$};
\node at (1.6, -.5) {$b_1$};
\node at (3.6, -.5) {$b_2$};
\node at (5.5, -.5) {$b_i$};
\end{tikzpicture}}

&  & & \hspace{.3in}

 \adjustbox{valign=c}{\begin{tikzpicture}
\draw(0,0)--(.5,.5);
\draw (0,0)--(-.5,.5);
\draw[dashed] (.5,.5)--(1,1)--(1,3)--(0,4);
\draw[dashed] (-.5,.5)--(-2,2)--(0,4);
\draw[dashed] (-1.5, 1.5)--(-1.2,2)--(-1.5, 2.5);
\draw[dashed] (-.5, .5)--(-.5, 3.5);

\draw[fill] (0,0) circle (.2ex);
\draw[fill] (.5,.5) circle (.2ex);
\draw[fill] (-.5,.5) circle (.2ex);
\draw[fill] (0,4) circle (.2ex);
\draw[fill] (-2,2) circle (.2ex);
\draw[fill] (-1.5, 1.5) circle (.2ex);
\draw[fill] (-1.2, 2) circle (.2ex);
\draw[fill] (-1.5, 2.5) circle (.2ex);
\draw[fill] (-.5, 3.5) circle (.2ex);

\node at (0,-.3) {$b_1$};
\node at (-.7, .4) {$a_1$};
\node at (.7, .4) {$c$};
\node at (0,4.3) {$a_j$};
\node at (-2.2,2) {$a_l$};
\node at (-1, 2) {$d$};
\node at (0.2,2) {$R$};
\node at (-1.55,2) {$R'$};
 \end{tikzpicture}}
\end{tabular}
\caption{Illustrations for the proof of Lemma \ref{lemma:arrow}.}\label{fig:wall}
\end{figure}
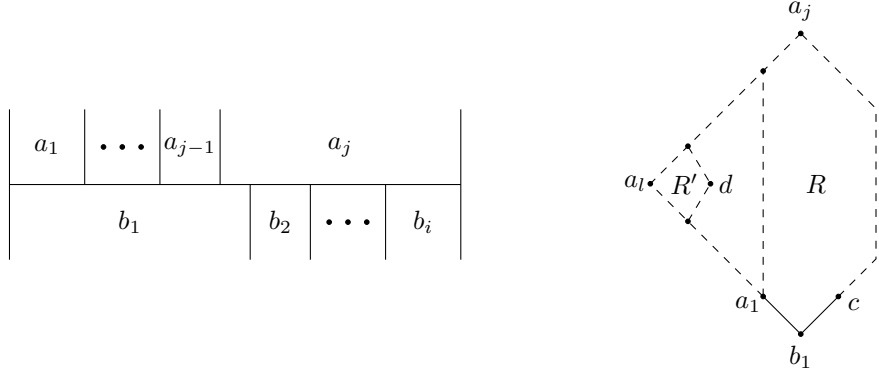

For a contradiction, assume that there exists a label of a rectangle adjacent to~$W$ that is not contained in the boundary of $R$.
We consider the case in which some~$a_l$ is not contained in the boundary of $R$, as illustrated in the right diagram of Figure \ref{fig:wall}.
Since $a_l<b_1$ in numerical order, $a_l$ is contained in the left connected component of the interval $(b_1, a_j)$.
Since $a_l$ is not contained in the left boundary of $R$, the element $a_l$ is contained in the left boundary of some other region, $R'$.
 Let $d$ denote an element contained in the right boundary of $R'$.
 The planarity of the embedding of $P$ implies that  $d$ satisfies $b_1<_P d$.
 Thus $d \nless_P b_1$, implying that that no interior points of rectangle $d$ are strictly left of and below the upper-right corner of rectangle~$b_1$.
 Additionally, $a_l \nless_P d$ so no interior points of rectangle $d$ are strictly right of and above the lower-left corner of rectangle $a_l$.
 Since $d$ and $a_l$ are contained respectively in the right and left boundaries of $R'$, we have that $a_l<d$ in numerical order.
 This implies that rectangle $d$ is contained in the section of the diagonal rectangulation $D$ below the horizontal line containing $W$ and right of the vertical line containing the right side of rectangle~$b_1$.
 Thus $b_1<d$ in numerical order.
 However, this contradicts the assumption that~$P$ is embedded naturally in the plane. 
 We conclude that each $a_l$ for $l\in [j]$ is contained in the left boundary of $R$.
 A similar argument demonstrates that each $b_k$ for $k \in [i]$ is contained in the right boundary of $R$.
 Thus, the claim holds.
 
 Since $W$ is horizontal, $b_1-1=a_j$, implying that the arrow of $R$ points to the left.
 By assumption, $\sigma$ respects the arrows of $R$ so each $b_k$ occurs before every $a_l$ in $\sigma$, i.e. for every horizontal wall, $\sigma$ satisfies the first condition of Lemma \ref{lemma:wallcond}.
 
 A virtually identical argument demonstrates that if $W$ is vertical, and on both sides of $W$ there are at least two adjacent rectangles, then $\sigma$ satisfies the second condition of the lemma.
 \end{proof}

\begin{proof}[Proof of  Theorem \ref{thm:baxter}]
Let $P$ be a Baxter poset, $X$ be the set of linear extensions of~$P$ that respect the arrows of~$P$ and let $\sigma$ be the Baxter permutation that is a linear extension of $P$.
By Lemma \ref{lemma:xnonempty}, the Baxter permutation $\sigma$ is in $X$.
By Lemma~\ref{lemma:arrow}, each element of $X$ satisfies the properties given in Lemma \ref{lemma:wallcond}.
However, by Lemma~\ref{lemma:prop}, only one linear extension of $P$ satisfies these properties so ${X=\{\sigma\}}$.
\end{proof}

\end{document}